\definecolor{Azure}{rgb}{0.2,0.8,1}
\theoremstyle{standard}
\declaretheorem[name=Theorem]{theorem}
\numberwithin{theorem}{section}
\numberwithin{equation}{section}
\declaretheorem[sibling=theorem, name=Lemma]{lemma}
\declaretheorem[sibling=theorem, name=Proposition]{prop}
\declaretheorem[sibling=theorem, name=Definition]{defi}
\declaretheorem[sibling=theorem, name=Corollary]{corollary}
\declaretheorem[sibling=theorem, name=Remark]{remark}
\declaretheorem[sibling=theorem, name=Question]{question}
\declaretheorem[sibling=theorem, name=Reminder]{reminder}
\declaretheorem[sibling=theorem, name=Conjecture]{conjecture}
\newcommand{\Q}{\mathbb{Q}}
\newcommand{\R}{\mathbb{R}}
\newcommand{\C}{\mathbb{C}}
\newcommand{\N}{\mathbb{N}}
\renewcommand\P{\mathbb P}
\newcommand{\E}{\mathbb E}
\newcommand{\dual}{\vee}
\DeclareMathOperator{\im}{im}
\DeclareMathOperator{\rk}{rank}
\DeclareMathOperator{\tr}{tr}
\DeclareMathOperator{\conv}{conv}
\DeclareMathOperator{\gram}{Gram}
\DeclareMathOperator{\minimize}{minimize}
\DeclareMathOperator{\find}{find}
\DeclareMathOperator{\given}{given}
\DeclareMathOperator{\subjto}{s. t.}
\DeclareMathOperator{\pof}{PoF}
\DeclareMathOperator{\rel}{rel}
\DeclareMathOperator{\relint}{relint}
\DeclareMathOperator{\sosupp}{sosupp}
\DeclareMathOperator{\suppf}{suppf}
\DeclareMathOperator{\length}{len}
\DeclareMathOperator{\psd}{PSD}
\newcommand{\irel}{I_{\rel}}
\subjclass{primary: 14Q30, 15A69. secondary: 62H12, 68Q25}
\keywords{Waring decomposition, SDP, Gaussian mixtures, subspace learning.}
\begin{document}


\begin{abstract} 
	We consider simultaneous Waring decompositions: Given forms $ f_d $ of degrees $ kd $, $ (d = 2,3 )$, which admit a representation as $ d $-th power sums of $ k $-forms $ q_1,\ldots,q_m $, when is it possible to reconstruct the addends $ q_1,\ldots,q_m $ from the power sums $ f_d $? Such powers-of-forms decompositions model the moment problem for mixtures of centered Gaussians. The novel approach of this paper is to use semidefinite programming in order to perform a reduction to tensor decomposition. 
	The proposed method works on typical parameter sets at least as long as $ m\leq n-1 $, where $ m $ is the rank of the decomposition and $ n $ is the number of variables. While provably not tight, this  analysis still gives the currently best known rank threshold for decomposing third order powers-of-forms, improving on previous work  \cite{Ge_Huang_Kakade_2015}, which required $ \Omega(1) \leq m \leq \mathcal{O}(\sqrt{n}) $ and, more recently, Bafna, Hsieh, Kothari and Xu \cite{Bafna_Hsieh_Kothari_Xu_2022}, which can go up to $ m = \mathcal{O}(\frac{n}{\log(n)^2}) $. Our algorithm can produce proofs of uniqueness for specific decompositions. 
	A numerical study is conducted on Gaussian random trace-free quadratics, giving evidence that the success probability converges to $ 1 $ in an average case setting, as long as $ m = n $ and $ n\to \infty $. Some evidence is given that the algorithm also succeeds on instances of rank $ m = \Theta(n^2) $. 
\end{abstract}

\title[Semidefinite Powers-of-Forms decomposition]{Unique Powers-of-forms decompositions from simple Gram spectrahedra}
\date{\today}
\author[Blomenhofer]{Alexander Taveira Blomenhofer}
\thanks{Centrum Wiskunde \& Informatica, Amsterdam}
\address{CWI, Networks \& Optimization, Amsterdam, Science Park 123, NL-1098 XG.}
\email{atb@cwi.nl}
\urladdr{cwi.nl/en/people/filipe-alexander-taveira-blomenhofer}
\maketitle

\section{Introduction}\label{sec:introduction}

Waring decompositions for polynomials are a highly studied problem with a wide range of applications in sciences and statistics, including phylogenetics \cite{Landsberg_2012}, cryogenic electron microscopy \cite{Bandeira_Blum-Smith_Kileel_Perry_Weed_Wein_2017}, Gaussian mixtures (\cite{Ge_Huang_Kakade_2015},\cite{Garg_Kayal_Saha_2020}, see \Cref{sec:applications}) and many more \cite{Landsberg_2012}. They serve as a fundamental model in the theory of arithmetic circuits \cite{Garg_Kayal_Saha_2020} and occur as an important algorithmic primitive for various machine learning problems \cite{Barak_Kelner_Steurer_2015},\cite{Ma_Shi_Steurer_2016}. Formally, for a fixed form, i.e., a homogeneous polynomial $ f_d \in K[X]_{dk}$ of degree $ dk \in \N$, the representation
\begin{align}\label{eq:waringdecomp}
	f_d = \sum_{i = 1}^{m} \lambda_i q_i^d, \qquad (q_1,\ldots,q_m\in K[X]_{k},\: \lambda\in K^m)
\end{align}
is a $ k $-\emph{Waring decomposition} of \emph{rank} $ m $ of $ f_d $ over the field $ K $.  For $ f_d\in K[X] $, the minimum $ m $ such that $ f_d $ has a $ k $-Waring decomposition of rank $ m $ is called the $ k $-\emph{Waring rank} of $ f_d $ over the field $ K $. Classically, the main focus of attention used to be the case where $ K = \C $ is the field of complex numbers, and power sums of \emph{linear} polynomials were considered. The latter corresponds to $ k = 1 $. Note that over the complex field, the weights $ \lambda_i $ are redundant, and thus omitted. A long series of work, started more than a century ago, e.g., by Sylvester \cite{Sylvester_1904}, Hilbert \cite{Hilbert_1933_letter} and Terracini \cite{Te12}, lead via the celebrated Alexander-Hirschowitz theorem \cite{hirschowitz1995polynomial} and results by  Chiantini-Ottaviani-Vannieuwenhoven  \cite{Chiantini_Ottaviani_2012},\cite{Chiantini_Ottaviani_Vannieuwenhoven_2014},\cite{Chiantini_Ottaviani_Vannieuwenhoven_2016}, Galuppi-Mella \cite{galuppi2019identifiability} (and many others) to a complete classification when \emph{generic} $ 1 $-Waring decompositions are unique for the forms they describe. This property is called \emph{generic identifiability}. 

A second line of thoughts (e.g., \cite{Harshman_1970},\cite{Leurgans_Ross_Abel_1993},\cite{Anandkumar_Ge_Hsu_Kakade_Telgarsky_2012},\cite{Lathauwer_Castaing_Cardoso_2007}) developed from the side of applications and was continued by theoretical computer scientists. Among many other things, it produced \emph{algorithmic uniqueness theorems} for $ 1 $-Waring decompositions. From one perspective, these can be seen as ``efficient'' algorithms that recover the representation $ f_d = \ell_{1}^d + \ldots + \ell_{m}^d $ from the $ d $-th power sum $ f_d $ as input, under some restrictive assumptions on the rank and some nondegeneracy assumptions on the parameters of the representation, cf. \Cref{thm:undercomplete-waringdecomp} and \cite[Theorem 2.4.8]{Taveira_Blomenhofer_Thesis}. From another perspective, these results provide a proof of uniqueness for the minimum rank decomposition, whenever certain explicit conditions are met. Therefore, they are a tool to produce rank lower bounds for explicit families of polynomials.  

The work on higher degree Waring decompositions has been pioneered by Reznick \cite{Reznick_1992},\cite{Reznick_2003},\cite{Reznick_Tokcan_2017} (with Tokcan), \cite{Reznick_2010},\cite{Reznick_2013},\cite{Reznick_2013_2},\cite{Reznick_2019},\cite{Reznick_2021}.
In recent years, geometers have been trying to understand {uniqueness} and {generic rank} of Waring decompositions also for higher values of $ k $ \cite{Froeberg_Ottaviani_Shapiro_2012}. A conjecture due to Ottaviani \cite[Conjecture 1.2]{Lundqvist_Oneto_Reznick_Shapiro_2019} states that the generic rank of $ k $-Waring decompositions behaves as expected from counting parameters, if $ d\ge 3 $. In \cite{Blomenhofer_Casarotti_Michalek_Oneto_2022}, the author showed in joint work with Casarotti, Michalek and Oneto, that for ``most'' subgeneric ranks $ m $, Waring decompositions are unique, based on work of Nenashev \cite{nenashev2017note} and Casarotti-Mella \cite{Casarotti_Mella_2022}. The results imply in particular bounds on the generic rank for $ k $-Waring decompositions. Casarotti and Postinghel \cite{Casarotti_Postinghel_2023} then studied a different asymptotic setting where not the number of variables $ n $ but rather the degree $ k $ is assumed to be large. Simultaneous Waring decompositions for vectors of forms of (possibly) different degrees have been studied geometrically e.g. by Angelini, Galuppi, Mella and Ottaviani \cite{Angelini_Galuppi_Mella_Ottaviani_2018}.
From the computational perspective, the work of Garg, Kayal and Saha \cite{Garg_Kayal_Saha_2020} and Bafna, Hsieh, Kothari, Xu \cite{Bafna_Hsieh_Kothari_Xu_2022} examined polynomial-time recovery procedures for some variants of powers-of-forms decomposition.  

\smallskip
This paper aims to generalize the second line of work, concerning algorithmic uniqueness theorems, to higher values of $ k $, although a slightly different setting is considered: 
The focus is on \emph{real} decompositions, degree $ k\ge 2 $, and simultaneous power sum decomposition in various degrees. Formally, we provide an algorithm and a uniqueness theorem (cf. \Cref{algo:pof-recovery-general} and \Cref{thm:main-result}) for third order powers-of-forms (POF) decompositions, which have the following basic template:
\begin{align}\label{eq:pof-statement-into}
	(\pof)_{f,m,k} \qquad \given \qquad &f_0,f_1,f_2,f_3 \text{ of degrees } 0,k,2k,3k,\\
	\qquad \find \qquad &q_1,\ldots,q_m \in \R[X]_{k}, \nonumber \\
	\qquad &\lambda_1,\ldots,\lambda_m \in \R_{\ge 0}, \nonumber \\
	\qquad \subjto \qquad &f_d = \sum_{i = 1}^{m} \lambda_i q_i^d, \qquad d =0,1,2,3.\nonumber
\end{align}
Let us call $ f_0, f_1, f_2, f_3 $ the \emph{power sums}, $ q_1,\ldots,q_m $ the \emph{addends} and $ \lambda_1,\ldots,\lambda_m $ the (nonnegative) \emph{weights}. Throughout, $ X = (X_1,\ldots,X_n) $ are polynomial variables and $ n, m, k\in \N $ are positive integers. For the scope of this paper, we limit our attention to real POF decompositions. The recovery task is not necessarily well-posed, since a given form $ f $ might have various decompositions. However, if $ m $ is not too large, e.g., if $ m = \mathcal{O}(n^{(d-1)k}) $, then a general form of $ k $-Waring rank $ m $ has a unique decomposition, cf. \cite{Blomenhofer_Casarotti_Michalek_Oneto_2022}. 

The special case $ k = 1 $ relates to a plethora of important problems, e.g.: {atom reconstruction} of finitely supported measures, mixtures of Gaussians with \emph{identical} covariance matrices and to {symmetric tensor} decomposition  \cite{Taveira_Blomenhofer_Thesis}. The case $ k = 2 $ of quadratic forms has to do with mixtures of \emph{centered} Gaussians. Therefore, it has Machine Learning applications, e.g., for learning a union of subspaces. These connections are explained in \Cref{sec:applications}. For mixtures of centered Gaussians, third order powers-of-quadratics decomposition yields the first case where nontrivial recovery results are achievable. This case is a special focus of the present paper, although our algorithm works for all values of $ k $. 

\subsection{Overview of Contributions and main results}

This paper proposes and analyzes an algorithm to recover third order POF decompositions, as stated in \eqref{eq:pof-statement-into}. A proof-of-concept implementation of the algorithm in \texttt{Julia} can be found on GitHub, see \cite{Taveira_Blomenhofer_Pofdecomp_Github_2023}.  \Cref{algo:pof-recovery-general} is based on semidefinite programming and is ``efficient'' in the sense that, aside from its calls to the SDP solver, which is treated as a blackbox, it only uses basic linear-algebraic operations on polynomially-sized quantities constructed from the input. In other words, one could say that it is efficient up to the automatizability of semidefinite programming, which is still not completely understood, cf. \cite{Odonnell_2017}. Any numerical troubles, such as condition,  are also ignored. E.g., for the sake of readability, we will write ``$ \lambda_1 > 0 $'' rather than requiring $ \lambda_1 $ to be sufficiently bounded away from zero. 

When it succeeds, the algorithm will also produce a proof of uniqueness of the decomposition. Therefore, it implies an algorithmic uniqueness result, \Cref{thm:main-result}, which is at the core of this paper. The conditions of \Cref{thm:main-result} can be explicitly described in terms of just the second power sum $ f_2 $ of degree $ 2k $ and checked \emph{before} computing the decomposition. One basic tool is to associate a subspace of degree-$ k $ forms to the second order power sum $ f_2 $, which will be called the \emph{Sum of Squares support} of $ f_2 $. It consists of all polynomials contributing to \emph{some} Sum-of-Squares decomposition of $ f_2 $:
\begin{align}\label{eq:intro-sosupp-def}
	\sosupp f_2 = \{p\in \R[X]_{k} \mid \exists \lambda \in \R_{> 0}\colon f_2-\lambda p^2 \text{ is a sum of squares}  \}.
\end{align}
The Sum of Squares support will be explained in detail in \Cref{sec:prelims}. For now, it was just introduced in order to state the main result. 

\begin{theorem}[Cf. \Cref{thm:main-result}]\label{thm:main-result-intro}  
	Let $ k\in \N $ and let $ f_2, f_3 $ be forms of degree $ 2k $ and $ 3k $, respectively. Denote $ U := \sosupp f_2 $ and $ N:=\dim U $. Assume that the space of threefold products $ \{u\cdot v\cdot w \mid u,v,w\in U\}$ of the polynomials in $ U $ has dimension $ \binom{N+2}{3} $.\footnote{In other words, there are no algebraic relations between linearly independent elements of $ U $ of degree at most $ 3 $.}
	Then $ f_2 $ and $ f_3 $ have at most one joint POF decomposition
	\begin{align}
		f_d = \sum_{i = 1}^{m} \lambda_i q_i^d, \qquad d=2,3
	\end{align}
	with positive weights $ \lambda_1,\ldots,\lambda_{m} > 0 $, $ m\in \N $ and linearly independent $ q_1,\ldots,q_m $. Furthermore, if such a decomposition exists, then \Cref{algo:pof-recovery-general} computes it efficiently and it is the unique minimum rank POF decomposition of $ (f_2, f_3) $. 
\end{theorem}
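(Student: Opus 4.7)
The plan is to separate the roles of $f_2$ and $f_3$ in the decomposition problem. The hypothesis on $f_2$ is used to extract, via semidefinite programming, the linear subspace $U = \sosupp f_2 \subseteq \R[X]_k$ in which every addend must lie. The hypothesis on threefold products then converts $f_3$ into a symmetric $3$-tensor on the $N$-dimensional space $U$, which is decomposed uniquely by a classical undercomplete tensor decomposition.

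For the first reduction, the key observation is that for any POF decomposition $f_2 = \sum_{i=1}^{m} \lambda_i q_i^2$ with $\lambda_i > 0$, each $q_i$ lies in $\sosupp f_2$: the residual $f_2 - \lambda_i q_i^2 = \sum_{j \neq i} \lambda_j q_j^2$ is SOS, so $q_i \in U$ directly from the definition in \eqref{eq:intro-sosupp-def}. For the second reduction, the dimensional hypothesis $\dim \vspan\{uvw : u,v,w \in U\} = \binom{N+2}{3}$ is exactly the statement that the symmetrized product map $\Sym^3 U \to \R[X]_{3k}$ is injective. Hence $f_3$ has at most one preimage $\tilde{f}_3 \in \Sym^3 U$, and any POF decomposition $f_3 = \sum_i \lambda_i q_i^3$ with $q_i \in U$ pulls back to the symmetric tensor decomposition $\tilde{f}_3 = \sum_i \lambda_i \tilde{q}_i^{\otimes 3}$. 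Since the $\tilde{q}_i$ are linearly independent in $U$ and $m \leq N$, this is an undercomplete symmetric $3$-tensor decomposition, which is uniquely and efficiently recovered by Jennrich-type simultaneous diagonalization as in \Cref{thm:undercomplete-waringdecomp}. The weights $\lambda_i > 0$ are then pinned down by matching against $f_2$, completing uniqueness and the algorithmic reconstruction.

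For the minimum-rank claim, any competing POF decomposition of $(f_2, f_3)$ with nonnegative weights can be trimmed to one with strictly positive weights of the same rank; its addends again lie in $U$, and via the injection $\Sym^3 U \hookrightarrow \R[X]_{3k}$ it yields another symmetric decomposition of the same underlying tensor $\tilde{f}_3$. A standard flattening argument forces the symmetric tensor rank of $\tilde{f}_3$ to equal $m$, so no competing decomposition of smaller rank can exist, and the undercomplete uniqueness rules out any other decomposition of rank $m$.

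The main obstacle will be the algorithmic computation of $U$. The SOS support is defined via strictly positive coefficients, and hence corresponds not to any particular PSD Gram matrix of $f_2$ but to the range of a PSD Gram matrix lying in the relative interior of the minimal face of the Gram spectrahedron containing all feasible Grams. Showing that an appropriate SDP subroutine returns a Gram matrix whose column span equals $U$ --- no smaller and no larger --- is the delicate step, and it is what the title refers to by ``simple Gram spectrahedra''. Once $U$ is in hand, the remaining work is linear algebra and the application of undercomplete symmetric tensor decomposition.
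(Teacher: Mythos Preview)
Your proposal is correct and follows essentially the same approach as the paper: both arguments confine all addends to $U=\sosupp f_2$, use the dimension hypothesis to make the degree-$3$ multiplication map injective, pull the problem back to an undercomplete decomposition over the abstract $N$-dimensional space, and invoke \Cref{thm:undercomplete-waringdecomp}. The only cosmetic difference is that the paper phrases the pullback via the graded algebra homomorphism $\varphi\colon \R[Y_1,\ldots,Y_N]\to\R[u_1,\ldots,u_N]$ (and passes both $g_2=\varphi_{\le 3}^{-1}(f_2)$ and $g_3=\varphi_{\le 3}^{-1}(f_3)$ to \Cref{thm:undercomplete-waringdecomp}), whereas you phrase it as injectivity of $\Sym^3 U\to\R[X]_{3k}$ and decompose $\tilde f_3$ first; your flattening argument for the minimum-rank claim is likewise equivalent to the paper's direct appeal to the rank inequality in \Cref{thm:undercomplete-waringdecomp}.
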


Unlike Waring decompositions of order $ 3 $ or higher, Sum-of-Squares representations are highly non-unique. Indeed, consider a Sum-of-Squares representation $ f = p_1^2  + \ldots + p_N^2 $ of some form $ f $ and any orthogonal matrix $ A\in \R^{N\times N} $. Write $ p:=(p_1,\ldots,p_N) $. Then clearly also the entries of $ s := Ap $ give a Sum-of-Squares representation of $ f $, since 
\begin{align}
	s_1^2  + \ldots +  s_N^2 = s^{T}s = p^{T}A^{T}A p = p^{T}p = p_1^2  + \ldots + p_N^2 = f.
\end{align}
Nevertheless, we say that a form $ f\in \R[X]_{2k} $ is \emph{uniquely Sum-of-Squares representable}, if there is only one Sum-of-Squares representation modulo orthogonal transformations.  

\Cref{sec:decomposition-algorithms} derives and explains the idea behind \Cref{algo:pof-recovery-general} and states the main result, while \Cref{sec:interpretation} offers a detailed discussion of the conditions. Here, we just highlight a few corollaries for instances constructed from \emph{general} addends $ q_1,\ldots,q_m $. One important quantity will be the number $ \beta_{k}(n) $ introduced below. 

\begin{defi}
	Let $ n\in \N$. Then we denote by $ \beta_{k}(n) $ the maximum number $ m $ of $ k $-forms $ q_1,\ldots,q_m $ that satisfy one of the following equivalent conditions
	\begin{enumerate}
		\item There are no algebraic relations of $ q_1,\ldots,q_m $ of degree at most $ 3 $. 
		\item There are no homogeneous algebraic relations of $ q_1,\ldots,q_m $ of degree $ 3 $.  
		\item $ \dim \R[q_1,\ldots,q_m]_{3k} = \binom{m+2}{3} $. 
	\end{enumerate}
\end{defi}

It holds that $ \beta_{k}(n) \ge n+1 $ for all $ k \ge 2, n\ge 2 $ and, e.g., that $ \beta_{2}(n)=\Theta(n^2) $, see \Cref{sec:real-geometry}. For power sums constructed from at most $ m\leq  \beta_{k}(n) $ \emph{general} addends, the conditions of \Cref{thm:main-result-intro} are satisfied, if $ f_2 $ is uniquely Sum-of-Squares representable. See \Cref{sec:real-geometry} and, in particular,  \Cref{prop:unique-sos-representations} for a detailed discussion. This yields the following simplification.

\begin{corollary}\label{cor:pof-recovery-intro-gram} 
	For any $ n, m, k\in \N $ with $ m\le \beta_{k}(n) $, there is an efficient algorithm for the following problem: If  $ \lambda_1,\ldots,\lambda_m \in \R_{>0} $ and $ q_1,\ldots, q_m \in \R[X]_{k} $ are general forms such that $ f_2 := \sum_{i = 1}^{m} \lambda_i q_i^2 $ is uniquely Sum-of-Squares representable, compute the set $ \{(q_1,\lambda_1),\ldots, (q_m,\lambda_m)\} $ from inputs $ f_2 $ and $ f_3 := \lambda_1 q_1^3 + \ldots + \lambda_m q_m^3 $.
\end{corollary}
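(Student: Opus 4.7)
The plan is to deduce the corollary from \Cref{thm:main-result-intro} by verifying its two hypotheses under the genericity and unique-SOS-representability assumptions. First I would use $m \le \beta_{k}(n)$: by the third equivalent characterization of $\beta_k(n)$, general $q_1,\ldots,q_m$ satisfy $\dim \R[q_1,\ldots,q_m]_{3k} = \binom{m+2}{3}$, and in particular they are linearly independent (any linear relation $\sum_i \alpha_i q_i = 0$ would cube to a genuine degree-$3$ algebraic relation, contradicting the defining property of $\beta_k$). Hence the target decomposition already meets the linear-independence requirement of the main theorem.

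Next I would identify $U := \sosupp f_2$ with $\vspan\{q_1,\ldots,q_m\}$. Since $\lambda_i > 0$, the identity $f_2 = \sum_i (\sqrt{\lambda_i}\, q_i)^2$ exhibits a concrete Sum-of-Squares decomposition of $f_2$, so unique-SOS-representability forces every other SOS decomposition of $f_2$ to have summands in the same subspace $\vspan\{q_1,\ldots,q_m\}$; this coincides with the SOS support defined in \eqref{eq:intro-sosupp-def}, as stated in \Cref{prop:unique-sos-representations}. Thus $N := \dim U = m$, and the space of threefold products of elements of $U$ equals $\R[q_1,\ldots,q_m]_{3k}$, whose dimension $\binom{m+2}{3} = \binom{N+2}{3}$ verifies the remaining hypothesis. \Cref{thm:main-result-intro} then yields \Cref{algo:pof-recovery-general} as an efficient recovery procedure returning the unique minimum-rank decomposition $\{(q_i,\lambda_i)\}_{i=1}^m$ from the input $(f_2,f_3)$.

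The principal obstacle is the identification $\sosupp f_2 = \vspan\{q_1,\ldots,q_m\}$: the SOS support is defined pointwise via existence of perturbations $f_2 - \lambda p^2$ remaining SOS, which is a priori unrelated to the linear span of a fixed SOS representation. Showing that these agree under unique-SOS-representability, and more broadly understanding when a form is uniquely Sum-of-Squares representable (which involves the geometry of Gram spectrahedra), is the substantive content behind \Cref{prop:unique-sos-representations} and is deferred to \Cref{sec:real-geometry}.
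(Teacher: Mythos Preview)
Your proposal is correct and follows essentially the same route as the paper's proof: identify $U=\sosupp f_2$ with $\langle q_1,\ldots,q_m\rangle$ via unique Sum-of-Squares representability, then use $m\le\beta_k(n)$ to get $\dim\R[U]_{3k}=\binom{m+2}{3}$ and apply \Cref{thm:main-result-intro}. One minor point: for the identification step the paper invokes \Cref{prop:gram-spec}(f) directly (a singleton Gram spectrahedron forces $\sosupp f_2$ to equal the image of the unique Gram matrix), which is simpler than going through \Cref{prop:unique-sos-representations}; the latter's genericity hypothesis $m\le\rk_k^\circ(n,2k)$ is only needed for the converse direction and is not required here.
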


\noindent
For $ m < n $, it is in addition possible to give geometric criteria. 

\begin{corollary}\label{cor:pof-recovery-intro-real-variety} 
	For any $ n, m\in \N $ with $ m\leq n-2 $, there is an efficient algorithm for the following problem: If $ q_1,\ldots, q_m \in \R[X]_{k} $ are general forms such that their real variety $ V_{\R}(q_1,\ldots,q_m) $ contains a nonzero point and $ \lambda_1,\ldots,\lambda_m $ are positive, compute the set $ \{(q_1,\lambda_1),\ldots, (q_m,\lambda_m)\} $ from inputs $ f_2 := \lambda_1 q_1^2 + \ldots + \lambda_m q_m^2 $ and $ f_3 := \lambda_1 q_1^3 + \ldots + \lambda_m q_m^3 $. 
\end{corollary}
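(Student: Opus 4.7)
The plan is to reduce Corollary~\ref{cor:pof-recovery-intro-real-variety} to Corollary~\ref{cor:pof-recovery-intro-gram}. Since the basic lower bound $\beta_k(n) \geq n+1$ holds for all $k \geq 2$ and $n \geq 2$, the hypothesis $m \leq n-2$ immediately implies $m \leq \beta_k(n)$, so the rank bound of the previous corollary is met. What remains is to establish that, under the present hypotheses, $f_2 = \sum_{i=1}^{m} \lambda_i q_i^2$ is uniquely Sum-of-Squares representable; then the algorithm from Corollary~\ref{cor:pof-recovery-intro-gram} recovers the set $\{(q_i, \lambda_i)\}$ from the pair $(f_2, f_3)$.

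For the reduction, I would argue as follows. Any SOS representation $f_2 = \sum_{j=1}^{N} p_j^2$, evaluated at a nonzero common root $x_0 \in V_{\R}(q_1, \ldots, q_m)$, gives $\sum_j p_j(x_0)^2 = 0$, so every $p_j$ must vanish at $x_0$. Hence $\sosupp f_2 \subseteq H_{x_0} := \{p \in \R[X]_k : p(x_0) = 0\}$, a hyperplane of codimension one in $\R[X]_k$. Equivalently, every positive semidefinite Gram matrix $G$ of $f_2$ satisfies $G v_k(x_0) = 0$, where $v_k(x_0)$ denotes the evaluation of the degree-$k$ monomial vector at $x_0$. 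Combined with generic linear independence of the $q_i$, this pins the SOS support down to $\sosupp f_2 = \vspan(q_1, \ldots, q_m)$, an $m$-dimensional subspace of the hyperplane $H_{x_0}$.

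The final, and main, step is to show that the additional face constraint $G v_k(x_0) = 0$, combined with an $m$-dimensional SOS support, collapses the Gram spectrahedron of $f_2$ to a single point, which is exactly the meaning of unique SOS representability modulo orthogonal transformations. This is precisely where the real-geometry machinery of Section~\ref{sec:real-geometry}, and in particular Proposition~\ref{prop:unique-sos-representations}, is invoked in the regime $m \leq n-2$. I expect this last step to be the main obstacle: one has to verify that the genericity of the $q_i$'s is strong enough to rule out any nontrivial deformation of the Gram matrix within the affine section cut out by $f_2$ and the common kernel direction $v_k(x_0)$. Once unique SOS representability is in hand, Corollary~\ref{cor:pof-recovery-intro-gram} immediately yields both the existence of the efficient recovery algorithm and the uniqueness of the decomposition claimed in Corollary~\ref{cor:pof-recovery-intro-real-variety}.
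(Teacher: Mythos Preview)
Your reduction to Corollary~\ref{cor:pof-recovery-intro-gram} and the verification of the rank bound $m\le\beta_k(n)$ are correct, and the target is indeed to show that $f_2$ is uniquely Sum-of-Squares representable. The gap is in how you obtain $\sosupp f_2=\langle q_1,\dots,q_m\rangle$. Evaluating an SOS representation at the single point $x_0$ only yields $\sosupp f_2\subseteq H_{x_0}$, a codimension-one hyperplane in $\R[X]_k$. The space $\langle q_1,\dots,q_m\rangle$ has dimension $m\le n-2$, which is vastly smaller than $\dim H_{x_0}=\binom{n+k-1}{k}-1$, and ``generic linear independence of the $q_i$'' does nothing to bridge this; it merely says the $q_i$ span an $m$-dimensional subspace of $H_{x_0}$, not that $\sosupp f_2$ is contained in that subspace. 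Your ``final step'' then presupposes the very equality that has not been established.

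The paper's argument (Corollary~\ref{cor:pof-recovery-interpretation-variety}, via Lemma~\ref{lem:sosupport-representation}) closes this gap by using much more of the real geometry: Bertini (Proposition~\ref{cor:bertini}) gives that for general $q_1,\dots,q_m$ with $m\le n-2$ the ideal $I=(q_1,\dots,q_m)$ is prime and $V(I)$ is smooth and irreducible. A single nonzero real point on a smooth irreducible variety forces $V_\R(I)$ to be Zariski dense in $V(I)$ (Reminder~\ref{ex:dense-real-points}). Now any $p\in\sosupp f_2$ vanishes on \emph{all} of $V_\R(I)$, hence on $V(I)$, hence lies in $\sqrt{I}=I$, so $p\in I_k=\langle q_1,\dots,q_m\rangle$. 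Only then does Proposition~\ref{prop:unique-sos-representations} apply to give unique SOS representability. In short, the single point $x_0$ is not used as a single linear constraint on Gram matrices; it is a seed from which one deduces an entire dense family of real constraints.
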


\noindent
The case $ m=n-1 $ is special: 
\begin{corollary}\label{cor:pof-recovery-intro-real-variety-m=n-1} 
	For any $ n\in \N $, there is an efficient algorithm for the following problem: If $ q_1,\ldots, q_{n-1} \in \R[X]_{k} $ are general forms such that all the finitely many lines of the variety $ V(q_1,\ldots,q_{n-1}) $ are real and $ \lambda_1,\ldots,\lambda_m $ are positive, compute the set $ \{(q_1,\lambda_1),\ldots, (q_m,\lambda_m)\} $ from inputs $ f_2 := \lambda_1 q_1^2 + \ldots + \lambda_m q_m^2 $ and $ f_3 := \lambda_1 q_1^3 + \ldots + \lambda_m q_m^3 $.  
\end{corollary}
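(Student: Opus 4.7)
The plan is to specialize \Cref{thm:main-result-intro} to $m = n-1$. This reduces the corollary to checking its two hypotheses for general $q_1, \ldots, q_{n-1}$ whose variety $V(q_1, \ldots, q_{n-1})$ has only real lines: namely that $U := \sosupp f_2$ has dimension $N = n-1$ and coincides with $\mathrm{span}(q_1, \ldots, q_{n-1})$ (unique Sum-of-Squares representability of $f_2$), and that the threefold products of $U$ span a space of the expected dimension $\binom{n+1}{3}$. Once both are verified, \Cref{algo:pof-recovery-general} furnishes the reconstruction.

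For the second hypothesis, I would appeal to the bound $\beta_k(n) \ge n+1$ recorded in \Cref{sec:real-geometry}. Since $m = n-1 \le n+1 \le \beta_k(n)$, general $q_1, \ldots, q_{n-1}$ admit no homogeneous degree-three algebraic relations, so their threefold products already fill the generic $\binom{n+1}{3}$-dimensional space. This step is identical to the corresponding one in Corollary~\ref{cor:pof-recovery-intro-real-variety} and requires no additional input beyond genericity of the $q_i$.

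The substance lies in the first hypothesis, unique SOS representability of $f_2$ in the borderline regime $m = N = n-1$. Unlike the case $m \le n-2$ of Corollary~\ref{cor:pof-recovery-intro-real-variety}, where $V(q_1, \ldots, q_m) \subset \mathbb{P}^{n-1}$ is positive-dimensional and a single real point already pins down the Gram spectrahedron, at $m = n-1$ the projective variety is zero-dimensional, a finite collection of points corresponding precisely to the ``finitely many lines'' in the statement. To cut out a single $\mathrm{O}(N)$-orbit in the Gram spectrahedron, every one of these points must be real; a conjugate pair of non-real lines would furnish a perturbation that preserves $f_2$ but alters its SOS decomposition modulo orthogonal transformation. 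Under the hypothesis that all such lines are real, I would invoke \Cref{prop:unique-sos-representations} from \Cref{sec:real-geometry}, which aligns real-variety conditions with unique SOS representability, to conclude $U = \mathrm{span}(q_1, \ldots, q_{n-1})$ and $N = n-1$.

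The main obstacle is exactly this last step: verifying that the condition ``all lines of $V(q_1, \ldots, q_{n-1})$ are real'' does translate into unique SOS representability within the framework of \Cref{sec:real-geometry}. This is the boundary case where every complex zero contributes a potential deformation direction to the Gram spectrahedron, and only by requiring all of them to be real can one kill each such direction. The cleanest route is to match the hypotheses of \Cref{prop:unique-sos-representations} directly rather than rerun the underlying real-algebraic bookkeeping; the condition in the corollary has evidently been calibrated so as to close the gap left open at $m = n-1$ by the positive-dimensional argument behind Corollary~\ref{cor:pof-recovery-intro-real-variety}.
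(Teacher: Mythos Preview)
Your overall plan matches the paper's: reduce to \Cref{thm:main-result-intro} by checking (a) no degree-$3$ relations and (b) $\sosupp f_2=\langle q_1,\ldots,q_{n-1}\rangle$. Part (a) is handled correctly via $\beta_k(n)\ge n+1$.

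The gap is in part (b). \Cref{prop:unique-sos-representations} does \emph{not} ``align real-variety conditions with unique SOS representability''; it is only the tautological equivalence that, for general forms of subgeneric rank, unique representability is the same as $\sosupp f_2=\langle q_1,\ldots,q_m\rangle$. It gives you no leverage to \emph{prove} either side from the hypothesis on real lines. The actual bridge in the paper is \Cref{lem:sosupport-representation}, whose input is that $I=(q_1,\ldots,q_{n-1})$ be radical and that $V_\R(I)$ be Zariski dense in $V(I)$. Radicality comes from Bertini's theorem in the $m=n-1$ case (\Cref{cor:bertini}); density of real points is precisely the assumption that all of the finitely many lines of $V(I)$ are real. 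The mechanism is not a perturbation/deformation argument as you sketch, but a direct evaluation: if $f_2=\sum p_j^2$ is any SOS representation, then each $p_j$ vanishes on $V_\R(I)$, hence on $V(I)$ by density, hence $p_j\in I_k=\langle q_1,\ldots,q_{n-1}\rangle$ by radicality. Your proposal identifies the right obstacle but cites the wrong tool and omits the Bertini step that makes the evaluation argument go through.
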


In \Cref{app:typical-regions-gram-spec}, it is proven that the conditions of both \Cref{cor:pof-recovery-intro-real-variety} and \Cref{cor:pof-recovery-intro-real-variety-m=n-1} are satisfied for \emph{typical} choices of $ q_1,\ldots,q_m $, i.e. on a Euclidean open subset of $ \R[X]_{k}^m $. However, note that the geometric arguments fail for $ m \gg n $, whereas one has evidence to believe that \Cref{algo:pof-recovery-general} can also decompose some instances of quadratic rank $ m=\Theta(n^2) $, cf. \Cref{conj:pof-recovery-intro-gram}. Therefore it is not clear whether \Cref{cor:pof-recovery-intro-gram} extends beyond $ m=n-1 $, but numerical evidence from \Cref{sec:trace-free-quadratics-study} strongly suggests so. Unique Sum-of-Squares representability appears to be in general not well-understood and \Cref{cor:pof-recovery-intro-gram} gives further motivation to understand it better. The recovery result from \Cref{thm:main-result} has consequences for certain Machine Learning problems, two of which we highlight in the following section. 

\subsection{Learning parameters of centered Gaussian mixtures}

The parameter estimation problem for Gaussian mixtures has a rich history, dating back to Pearson \cite{Pearson_1900}. It has now been studied over more than a century in all kinds of flavours e.g. from the perspective of computer science (\cite{Dasgupta_1999a},\cite{Sanjeev_Kannan_2001a},\cite{Dasgupta_Schulman_2007a},\cite{Moitra_Valiant_2010},\cite{Kalai_Moitra_Valiant_2010},\cite{Hsu_Kakade_2013a},\cite{Anderson_Belkin_Goyal_Rademacher_Voss_2014a},\cite{Regev_Vijayaraghavan_2017a},\cite{Liu_Moitra_2021}), algebraic geometry (\cite{Amendola_Faugere_Sturmfels_2016},\cite{Amendola_Ranestad_Sturmfels_2018}), moment problems (\cite{Curto_DiDio_2022},\cite{DiDio_2023}) and applications (\cite{Reynolds_Rose_1995a},\cite{Permuter_Francos_Jermyn_2003a}). A mixture of $ m $ \emph{centered} Gaussians has a degree-$ 2d $ moment form (cf. \Cref{sec:gmm}) proportional to 
\begin{align}
	\sum_{i = 1}^{m} \lambda_i q_i^d,
\end{align}
where $ \lambda_1,\ldots,\lambda_m \in \R_{\ge 0} $ are the mixing weights (summing up to $ 1 $) and $ q_1,\ldots,q_m $ are positive (semi)definite quadratic forms $ q_i = X^{T}\Sigma_i X $, where $ \Sigma_i $ is the covariance matrix of the $ i $-th centered Gaussian. Thus, there is a straightforward connection between the parameter estimation problem for mixtures of centered Gaussians from their moments on one side and decompositions as powers of quadratic forms on the other side. At first glance, \Cref{thm:main-result-intro} does not fare well together with the setting of Gaussian mixtures, since if one of the forms $ q_1,\ldots,q_m $ is positive definite, then $ \sum_{i = 1}^{m} \lambda_i q_i^2 $ will never be uniquely Sum-of-Squares representable. With some slight adaptations, it is possible to prove a recovery result for typical instances of Gaussian mixtures. This is done in \Cref{sec:applications} and highlighted here: 

\begin{corollary}\label{cor:cgmm-recovery-intro} 
	For any $ n\in \N $, $ m \in \{1,\ldots, n-1\} $, there is a Euclidean open subset $ \mathcal{U} $ of $ \R[X_1,\ldots,X_n]_{2}^m $ and an efficient algorithm for the following problem:  If $ Y $ is a mixture of $ m $ centered Gaussian random variables with general positive definite covariance forms $ (q_1,\ldots,q_m) \in \mathcal{U} $ and positive mixing weights $ \lambda_1,\ldots,\lambda_m $, compute the set of parameters $ \{(q_1, \lambda_1),\ldots, (q_m, \lambda_m)\} $ from the moments $ \mathcal{M}_{\leq 6}(Y) $ of $ Y $ of degree at most $ 6 $. 
\end{corollary}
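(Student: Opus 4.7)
From the moments $\mathcal{M}_{\leq 6}(Y)$ of the centered Gaussian mixture $Y$, one first extracts the power sum forms $f_1, f_2, f_3$ of degrees $2, 4, 6$, satisfying $f_d = \sum_{i=1}^{m} \lambda_i q_i^d$ up to the combinatorial constants produced by Isserlis' formula for Gaussian moments; the mixing weights being probabilities also gives $\sum_i \lambda_i = 1$. The plan is then to reduce \Cref{cor:cgmm-recovery-intro} to \Cref{cor:pof-recovery-intro-gram} by an affine shift that transforms positive definite covariance forms into indefinite ones, thereby restoring unique Sum-of-Squares representability of the second power sum.

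Fix a scalar $c \in \R \setminus \{1\}$ and define the shifted quadratics $\tilde{q}_i := q_i - c f_1$. Expanding $(q_i - c f_1)^d$ for $d=2, 3$ and using $\sum_i \lambda_i = 1$, one obtains
\begin{align*}
\tilde{f}_2 := f_2 + (c^2-2c)\, f_1^2 &= \sum_{i=1}^{m} \lambda_i \tilde{q}_i^2, \\
\tilde{f}_3 := f_3 - 3c f_1 f_2 + (3c^2-c^3)\, f_1^3 &= \sum_{i=1}^{m} \lambda_i \tilde{q}_i^3,
\end{align*}
both of which are computable from the input moments. An elementary linear-algebraic check (expanding $\sum_i \alpha_i \tilde{q}_i = 0$ and using that $f_1 = \sum_j \lambda_j q_j$ with $\sum_j \lambda_j = 1$) shows that for any $c \neq 1$ the $\tilde{q}_i$ are linearly independent whenever the $q_i$ are. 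Moreover, for generic PD configurations $(q_i)$ and a suitable choice of $c$, each $\tilde{q}_i$ is indefinite, so the obstruction discussed before \Cref{cor:cgmm-recovery-intro} --- inflation of $\sosupp(\tilde{f}_2)$ by positive definite contributions --- disappears.

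One then applies \Cref{cor:pof-recovery-intro-gram} to the pair $(\tilde{f}_2, \tilde{f}_3)$: the hypothesis $m \leq \beta_2(n) = \Theta(n^2)$ is automatic under the assumption $m \leq n-1$, and the required unique Sum-of-Squares representability of $\tilde{f}_2$ is a generic condition on PD configurations (for the fixed $c$). Once the algorithm recovers $\{(\tilde{q}_i, \lambda_i)\}_{i=1}^m$, the original parameters are reconstructed via $q_i := \tilde{q}_i + c f_1$. The open set $\mathcal{U}$ claimed by the corollary is then precisely the set of PD covariance tuples for which the shifted $\tilde{f}_2$ is uniquely Sum-of-Squares representable; its openness follows from stability of this condition under small perturbations of $(q_i)$.

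The main obstacle is verifying nonemptiness of $\mathcal{U}$: one must exhibit at least one PD configuration whose shift yields a uniquely SOS representable $\tilde{f}_2$. A natural strategy is to lift an indefinite configuration $(\tilde{q}_i)$ already known to give uniquely SOS representable $\tilde{f}_2$ --- for instance, trace-free quadratics from the regime studied in \Cref{sec:trace-free-quadratics-study} and the open loci of \Cref{app:typical-regions-gram-spec} --- to a PD configuration by setting $q_i := \tilde{q}_i + \gamma \cdot (X_1^2 + \ldots + X_n^2)$ for $\gamma$ sufficiently large. A matching choice of $c$ then reproduces the original $\tilde{q}_i$ up to lower-order corrections, and stability of unique SOS representability under small perturbations propagates validity to a Euclidean neighborhood of such lifted configurations, yielding the desired open set $\mathcal{U}$.
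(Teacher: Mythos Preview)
Your overall strategy --- shift the positive definite covariance forms to an indefinite configuration, apply the main decomposition result, then shift back --- is exactly the approach the paper takes. The key difference lies in the choice of shift: the paper shifts by a \emph{fixed} positive definite form $\lambda p$ (independent of the instance), whereas you shift by $c f_1$, which depends on both the covariance forms $q_i$ \emph{and} the mixing weights $\lambda_i$.

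This data-dependence creates two gaps. First, the corollary asks for an open set $\mathcal{U} \subseteq \R[X]_2^m$ that works for \emph{all} positive mixing weights; since your shifted forms $\tilde{q}_i = q_i - c\sum_j\lambda_j q_j$ depend on the weights, the unique-SOS-representability condition you impose is a condition on $(q,\lambda)$ jointly, not on $q$ alone, and your definition of $\mathcal{U}$ is not well-posed as stated. Second, your nonemptiness argument is incomplete: lifting an indefinite configuration $(\tilde{q}_i^0)$ via $q_i := \tilde{q}_i^0 + \gamma(X_1^2+\ldots+X_n^2)$ and then shifting back by $c f_1$ does \emph{not} reproduce $\tilde{q}_i^0$ up to a small perturbation --- a short computation gives $\tilde{q}_i = \tilde{q}_i^0 - c\sum_j \lambda_j \tilde{q}_j^0 + (1-c)\gamma(X_1^2+\ldots+X_n^2)$, and the last term is of order $\gamma$ (large) unless $c\to 1$, which you have excluded precisely to preserve linear independence. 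So stability of unique SOS representability does not apply to the relevant perturbation.

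The paper avoids both issues by shifting each $q_i$ by a fixed form $\lambda p$ (with $p$ positive definite and $\lambda$ a sufficiently large scalar, chosen once and for all): the shifted forms $q_i - \lambda p$ are independent of the mixing weights, so the open set $\mathcal{U}' = \{q \in \mathcal{U} + \lambda p \mid q_i \succ 0\}$ (with $\mathcal{U}$ the open set from \Cref{cor:pof-recovery-intro-real-variety}) is manifestly weight-independent, and nonemptiness is immediate by translating $\mathcal{U}$ far enough into the positive definite cone. Your argument is easily repaired by replacing the data-dependent shift $c f_1$ with such a fixed shift.
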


\subsection{Learning unions of subspaces}

A special type of Gaussian mixture distributions can be used as a model for subspace learning. Here, data is assumed to be normally distributed on either of the $ r $-dimensional subspaces $ U_1,\ldots,U_m $ and the task is to find bases for the subspaces $ U_1,\ldots,U_m $ from samples of the mixture distribution as input. The main difference to a general Gaussian mixture instance from above is that the forms $ q_1,\ldots,q_m $ corresponding to the subspaces $ U_1,\ldots,U_m $ will not have full rank. 

We highlight this special application, since it is a case where one needs uniqueness not for \emph{general} forms, but for forms that are general \emph{within the class of fixed-rank quadratic forms}. We are not aware of any decomposition result applicable for this case, since the previous work \cite{Bafna_Hsieh_Kothari_Xu_2022}, \cite{Ge_Huang_Kakade_2015} is based on a probabilistic analysis and thus implicitly assumes full-rank quadratics.

\begin{corollary}\label{cor:unionspaces-recovery-intro} 
	For any $ n, r\in \N_{\ge 3} $, $m \leq n-1$, there is a Euclidean open subset $ \mathcal{U} $ of the problem parameters\footnote{The parameters are the subspaces together with the means and covariances of the Gaussians. } and an efficient algorithm for the following problem: If $ Y_1,\ldots,Y_m $ are normally distributed random variables on $ r $-dimensional subspaces $ U_1,\ldots,U_m $ and $ \lambda\in \R_{> 0}^m $ with $ \sum_{i = 1}^{m} \lambda_i = 1 $, compute bases for the subspaces $ U_1,\ldots,U_m $ from the moments of $ \lambda_1 Y_1 \oplus \ldots \oplus \lambda_m Y_m $ of degree at most $ 6 $. 
\end{corollary}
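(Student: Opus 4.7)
The strategy is to reduce the subspace-learning task to an instance of third-order powers-of-forms decomposition and then to recover the subspaces via linear algebra from the reconstructed quadratic forms.

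First, I would extract the relevant power sums from moment data. A centered Gaussian $Y_i$ supported on an $r$-dimensional subspace $U_i$ has a PSD covariance $\Sigma_i$ of rank $r$ with image $U_i$, and its moments of degree $2d$ are, up to an explicit scalar, equal to the polynomial power $q_i^d$ of the associated quadratic form $q_i := X^T \Sigma_i X$, via Isserlis' formula; this correspondence will be made explicit in \Cref{sec:gmm}. Passing to the mixture of the $Y_i$ with weights $\lambda_i$, the given moments of degree at most $6$ then yield the power sums $f_d = \sum_{i=1}^{m} \lambda_i q_i^d$ for $d \in \{2,3\}$.

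Second, I would apply the variant of the POF decomposition algorithm developed for \Cref{cor:cgmm-recovery-intro}. Since the $q_i$ are PSD, $f_2$ is \emph{not} uniquely Sum-of-Squares representable, so the hypotheses of \Cref{thm:main-result-intro} are not met verbatim; but as announced in the introduction and to be established in \Cref{sec:applications}, a slight adaptation of the algorithm still recovers the decomposition on a Euclidean open subset of parameters. The task is therefore to exhibit a Euclidean open subset $\mathcal{U}$ of (subspace, covariance)-parameters on which this adapted algorithm succeeds. Openness is automatic because the relevant conditions are open semialgebraic; the main obstacle is nonemptiness, since \Cref{sec:real-geometry} only guarantees the required cubic-independence property among \emph{all} quadratics, whereas here the $q_i$ are constrained to the rank-$r$ PSD locus. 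A natural candidate instance takes $U_1,\ldots,U_m$ in general linear position with $m \leq n-1$ and covariances close to the orthogonal projection $\pi_{U_i}$; verification reduces, as in the proof of \Cref{cor:cgmm-recovery-intro}, to checking that the threefold products $\{q_i \cdot q_j \cdot q_k\}_{i \leq j \leq k}$ span a $\binom{m+2}{3}$-dimensional subspace of $\R[X]_{6}$, which is an open condition that holds at such a reference instance.

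Third, from the reconstructed set $\{(q_1, \lambda_1), \ldots, (q_m, \lambda_m)\}$, the subspaces are read off as $U_i = \mathrm{image}(\Sigma_i)$, where $\Sigma_i$ is the Gram matrix of $q_i$; a basis for $U_i$ is then obtained by standard linear algebra. The hypothesis $r \geq 3$ serves to keep the rank-$r$ quadrics clear of small-rank degenerate loci where the open semialgebraic conditions identifying $\mathcal{U}$ could fail, but otherwise no new difficulty arises beyond those already handled in the proof of \Cref{cor:cgmm-recovery-intro}.
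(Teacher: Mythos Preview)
Your reduction of subspace learning to powers-of-forms decomposition (steps one and three) is correct and matches the paper. The gap is in step two.

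You write that ``verification reduces, as in the proof of \Cref{cor:cgmm-recovery-intro}, to checking that the threefold products $\{q_i q_j q_k\}$ span a $\binom{m+2}{3}$-dimensional subspace.'' This is not what that proof does, and it is not sufficient here. \Cref{thm:main-result-intro} needs two conditions: the cubic-independence condition you name, \emph{and} the condition $\sosupp f_2 = \langle q_1,\ldots,q_m\rangle$ (equivalently, unique Sum-of-Squares representability after the shift). The second condition is the hard one, and your proposal does not address it. The proof of \Cref{cor:cgmm-recovery-intro} handles it by invoking \Cref{cor:pof-recovery-intro-real-variety}, which in turn relies on the geometric criterion that $V_\R(q_1,\ldots,q_m)$ is dense in $V(q_1,\ldots,q_m)$---and that criterion was established only for \emph{generic} quadratics in $\R[X]_2^m$. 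Here the $q_i$ are constrained to the rank-$r$ locus $\mathcal{D}_r$, a proper subvariety, so you cannot simply import that result; this is precisely the point the paper flags when it says the subspace case ``needs uniqueness not for \emph{general} forms, but for forms that are general \emph{within the class of fixed-rank quadratic forms}.''

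Your candidate instance (orthogonal projections onto subspaces in general position) does not obviously satisfy the real-radical condition after shifting, and you give no argument for it. The paper instead constructs an explicit family $q_i = X_i^2 - X_n^2$, verifies by hand that the ideal $(q_1,\ldots,q_m)$ is radical with all $2^m$ projective zeros real (so \Cref{lem:sosupport-representation} applies), uses Kleiman--Bertini and a B\'ezout-count argument to show these properties persist on a Euclidean open set in $\mathcal{D}_r^m$, and then shifts by $2X_n^2$ to land in the positive-semidefinite rank-$r$ locus. Without an analogous explicit instance and a real-points argument tailored to $\mathcal{D}_r$, your step two remains a wish rather than a proof.
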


\subsection{Relevance of results}
The aim of this work is to get tighter, algorithmic rank lower bounds for third-order powers-of-forms decomposition. In a typical real case, with \Cref{cor:pof-recovery-intro-real-variety-m=n-1}, we improve the rank threshold for efficient recovery from $ m\le \mathcal{O}(\frac{n}{\log(n)^2}) $ (due to \cite{Bafna_Hsieh_Kothari_Xu_2022}) or $\Omega(1)\le m\le \mathcal{O}(\sqrt{n}) $ (due to \cite{Ge_Huang_Kakade_2015}) to $ 1\le m\le n-1 $. Compared to the previous results, this gives an improvement of the asymptotic order \emph{and} the constant factors, with a much simpler proof. In addition, \Cref{algo:pof-recovery-general} implicitly produces a proof of uniqueness of the minimum rank decomposition for any concrete instance where it succeeds. 

Analysis beyond the case $ m = n-1 $ is more difficult, since one may not rely on geometric arguments any more. However, there is significant reason to hope that \Cref{algo:pof-recovery-general} can  decompose instances of rank $ m=\Theta(n^2) $. Indeed, $ m=\Theta(n^2) $ general quadratics $ q_1,\ldots,q_m $ do not satisfy any algebraic relations of degree $ 3 $, which is implicitly shown by Bafna, Hsieh, Kothari and Xu \cite[Section 6.4]{Bafna_Hsieh_Kothari_Xu_2022}. The big open question is whether there are also instances of $ m=\Theta(n^2) $ quadratics that are uniquely Sum-of-Squares representable, and if these sets have nonempty intersection. Numerical findings suggest this is the case, cf. \Cref{conj:pof-recovery-intro-gram}. The threshold $ m\in \mathcal{O}(n) $ is thus likely not an actual algorithmic boundary, see \Cref{sec:trace-free-quadratics-study}.

\subsubsection{Acknowledgements}
I wish to thank Pravesh Kothari, who encouraged me to work on powers-of-forms decompositions and pointed me towards the work of Ankit Garg \cite{Garg_Kayal_Saha_2020}. I also wish to thank Monique Laurent for the rich feedback provided to this article, in particular for catching several mistakes.  I further wish to thank Greg Blekhermann and João Gouveia for the suggestion to study trace-free quadratic forms, Julian Vill and Claus Scheiderer for sharing their expertise on Gram Spectrahedra and to Simon Telen for being a good listener. Part of this work was completed while the author was supported by the Dutch Scientific Council (NWO) grant OCENW.GROOT.2019.015 (OPTIMAL). 

\subsubsection{Disclosure}
The main ideas of this paper were published first as part of my doctoral thesis \cite{Taveira_Blomenhofer_Thesis} at Universität Konstanz. Some formulations might therefore overlap. However, this paper significantly elaborates on the ideas that were present in \cite{Taveira_Blomenhofer_Thesis}.

\section{Preliminaries}\label{sec:prelims}
\raggedbottom
\paragraph{Notation}

Let us write $ \N = \{1,2,3,\ldots \} $ for the set of natural numbers and $ \N_0 $ for $ \N \cup \{0\} $. This paper concerns POF decompositions over the real field $ \R $, but we might occasionally mention some results that hold over the complex numbers $ \C $. For $ K\in \{\R, \C\} $, we endow any finite dimensional $ K $-vector space $ U $ with the $ K $-Zariski topology. The varieties considered in this paper are closed affine or projective varieties. Closed affine varieties are subsets of $ U $ that can be written as the feasible set $ V(q_1,\ldots,q_m) $ of a  system of polynomial equations
\begin{align}
	q_1(x) = 0, \ldots, q_m(x) = 0, \quad (x\in U).
\end{align}
The space of linear functionals from $ U $ to $ \R $ is denoted $ U^{\dual} $ and called the \emph{dual space} of $ U $. Algebraic unknowns will be denoted by capital letters. In particular, for $ U = K^n $, it is by default assumed that the unknowns are $ X = (X_1,\ldots,X_n) $ and the polynomial ring is denoted $ K[X] $. Note that $ p\in K[X] $ denotes a polynomial, whereas $ p(x) $ denotes the evaluation of $ p $ in some point $ x\in K^n $. As one exception, when talking about algebraic relations of some polynomials $ q_1,\ldots,q_m\in K[X] $, let us denote their \emph{ideal of relations} 
\begin{align}
	\irel(q_1,\ldots,q_m) = \{f\in K[Y] \mid f(q_1,\ldots,q_m) = 0\}
\end{align}
in some separate set of unknowns $ Y = (Y_1,\ldots,Y_m) $, to avoid confusion. For some graded $ K $-algebra $ R $, $ R_{k} $ denotes the $ k $-th graded component of $ R $ and $ R_{\leq k} := R_{0} \oplus \ldots \oplus R_{k}$ denotes the part of grade at most $ k $. Quotients of polynomial rings by homogeneous ideals will naturally inherit the grading by the degree. However, for a subalgebra $ K[q_1,\ldots,q_m] \subseteq K[X_1,\ldots,X_n]$ generated by some $ k $-forms $ q_1,\ldots,q_m $, we will often deviate from the canonical grading by the degree and instead grade $ K[q_1,\ldots,q_m] $ by $ \frac{1}{k} $ times the degree, for technical reasons. 

\medskip
The reader is assumed to have some basic familiarity with convex geometry (e.g., the notions of convex cones, faces, relative interior, conic duality) and with algebraic geometry (e.g., Bertini's theorem). For the background knowledge, cf. the books of Barvinok \cite{Barvinok_2002} for convex geometry and Hartshorne \cite{hartshorne2013algebraic} for algebraic geometry. For a convex cone $ C\subseteq U $ in some $ \R $-vector space $ U $, the \emph{dual cone} of $C$ is denoted as 
\begin{align}
	C^{\ast} := \{L\in U^{\dual} \mid \forall u\in U\colon L(u) \ge 0 \} \subseteq U^{\dual}.
\end{align}
In the special case where $ C $ is even a subspace of $ U $, it holds
\begin{align}
	C^{\ast} = \{L\in U^{\dual} \mid \forall u\in U\colon L(u) = 0 \}.
\end{align}
Thus, $ C^{\ast} $ is then a subspace of $ U^{\dual} $, which is called the \emph{conormal space} of $ C $. It is commonly denoted $ C^{\perp} $ rather than $ C^{\ast} $. If $ R $ is a commutative, graded $ \R $-algebra with graded components $ R_0, R_1, R_2,\ldots $, then for $ k\in \N_0 $, we denote by 
\begin{align}
		\Sigma_{R, 2k} = \left\{f\in R_{2k} \mid \exists N \in \N_0, p_1,\ldots, p_N \in R_{k}\colon f = \sum_{i=1}^N p_i^2  \right\}.
\end{align}
the \emph{homogeneous Sums-of-Squares cone of $ R $} in degree $ 2k $. If $ R = \R[X] $ is the polynomial ring, we simply write $ \Sigma_{2k} $, suppressing the dependency on the variables. For a homogeneous ideal $ I \subseteq R $, we denote by $ I_k $ the degree-$ k $ component of $ I $, i.e., $ I_k = I\cap R_k $. 

\subsection{Gram Spectrahedra} 
A form $ f \in \R[X] $ is a \emph{sum of squares} if there exist $ N\in \N_0 $ and forms $ q_1,\ldots,q_N \in \R[X] $ such that 
\begin{align}\label{eq:sos-representation}
	f = \sum_{i = 1}^{N} q_i^2.
\end{align}
The right hand side of \eqref{eq:sos-representation} is called a \emph{Sum-of-Squares representation}. For any orthogonal transformation $ A \in \R^{N\times N}$, both $ q = (q_1,\ldots,q_N)^{T} $ and $ A(q_1,\ldots,q_N)^{T} $ represent the same polynomial $ f $. Let us denote by $ [X]_{k} = (X^{\alpha})_{|\alpha| = k} $ the vector of monomials of degree $ k $. Then, any polynomial $ p\in \R[X]_{k} $ can be written as $ p = c_p^{T}[X]_{k} $ for some real \emph{coefficient vector} $ c_p = (c_{p, \alpha})_{|\alpha| = k} $. This allows to write Sum-of-Squares representations such as \eqref{eq:sos-representation} via \emph{Gram matrices} 
\begin{align}\label{eq:gram-matrix-representation}
	f = [X]_{k}^{T} \left(\sum_{i = 1}^{N} c_{q_i}c_{q_i}^{T}\right)[X]_{k} = [X]_{k}^{T}G(q)[X]_{k}.
\end{align}
Here, we denote $ G(q) := \sum_{i = 1}^{N} c_{q_i}c_{q_i}^{T} $ for the positive semidefinite (psd) Gram matrix of the Sum-of-Squares representation $ f = q^{T}q $. Let us write $ G\succeq 0 $ to denote that some (symmetric) matrix $ G $ is psd. It turns out that any matrix representation $ f = [X]_{k}^{T}G[X]_{k} $, where $ G\succeq 0 $, corresponds to a class of Sum-of-Squares representations modulo orthogonal transformations. The convex set 
\begin{align}
	\gram(f) := \{G \succeq 0 \mid [X]_{k}^{T}G[X]_{k} = f \}
\end{align}
is called the \emph{Gram spectrahedron} of $ f $. 
Let us collect some basic properties. 

\begin{prop}\label{prop:gram-spec}
	Let $ k\in \N $, $ f \in \Sigma_{2k} $. 
		\begin{enumerate}[(a)]
		\item Every face $ F $ of $ \gram(f) $ has an associated subspace $ U_F $ such that 
		\begin{align*}
			F = \{G\in \gram(f) \mid \im G \subseteq U_F\}
		\end{align*}
		and such that equality $ \im G = U_F $ holds for \emph{all} points in the relative interior of $ F $. We interpret $ U_F $ as a subspace of $ \R[X]_{k} $, by sending $ c\in U_F $ to $[X]_{k}^{T} c $. 
		\item A relative interior point of $ F $ corresponds to a class of Sum-of-Squares representations of $ f $ (modulo orthogonal transformations) of length $ \dim U_F $. 
		\item A linear subspace $ U $ of $ \R[X]_{k} $ is called \emph{facial} for $ \gram(f) $, if there exists some $ G $ in $ \gram(f) $ such that $ \im G = U $. 
		\item If $ F' \subsetneq F $ is a proper subface, then $ \dim U_{F'} < \dim U_{F} $. 
		\item The set 
		\begin{align*}
			\sosupp f = \{p\in \R[X]_{k} \mid \exists \lambda \in \R_{> 0}\colon f-\lambda p^2 \text{ is a sum of squares}  \}
		\end{align*}
		of all polynomials contributing to some Sum-of-Squares decomposition of $ f $ is a subspace of $ \R[X]_{k} $. 
		\item The sum of facial subspaces is facial. In particular, there exists a largest facial subspace $ U_{\gram(f)} $ of $ \gram(f) $ and this subspace equals $ \sosupp f $. 
	\end{enumerate} 
\end{prop}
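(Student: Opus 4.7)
The overall strategy is to reduce everything to the well-understood face structure of the PSD cone $\psd_N$ (with $N=\binom{n+k-1}{k}$) and then pull back under the affine map $\Phi\colon \psd_N \to \R[X]_{2k}$, $G\mapsto [X]_{k}^{T}G[X]_{k}$, whose fiber over $f$ is $\gram(f)$. The crucial classical fact I would invoke is that the faces of $\psd_N$ are in bijection with subspaces $V\subseteq \R^N$ via $V \mapsto F_V:=\{G\succeq 0\mid \im G \subseteq V\}$, and that the relative interior of $F_V$ consists exactly of those $G\in F_V$ with $\im G = V$. The key observation that ties these two pictures together is that the face of $\gram(f)$ containing a given $G\in\gram(f)$ in its relative interior is precisely $F_{\im G}\cap \gram(f)$; this is immediate from the fact that $\gram(f)$ is a slice of $\psd_N$.

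For (a), I would argue that faces of a slice of $\psd_N$ are exactly intersections of faces of $\psd_N$ with the affine subspace, so each face $F$ of $\gram(f)$ determines a subspace $U_F:=\im G$ (which I identify with a subspace of $\R[X]_{k}$ via $c\mapsto [X]_{k}^{T}c$) for any $G\in\relint F$, and this $U_F$ is independent of the chosen relative-interior point by the face structure of $\psd_N$. Part (b) follows because rank-$r$ factorizations $G = CC^{T}$ of a PSD matrix are unique up to right-multiplication of $C$ by an orthogonal matrix, and the columns of $C$ are precisely the coefficient vectors of an SOS representation of length $r = \dim U_F$. Part (c) is a definition. For (d), I would note that $F'\subsetneq F$ implies $\relint F'\cap \relint F = \emptyset$; if we had $U_{F'} = U_F$, then any $G'\in\relint F'$ would satisfy $\im G' = U_F$, placing $G'$ in $\relint F$ by (a), a contradiction.

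The substantive content is in (e) and (f). The guiding lemma I would establish first is the equality
\begin{align*}
    \sosupp f \;=\; \bigcup_{G\in \gram(f)} \im G
\end{align*}
(where images are viewed in $\R[X]_k$). The inclusion ``$\supseteq$'' uses the standard rank-drop trick: if $c_p\in \im G$, pick $\lambda>0$ minimal so that $G-\lambda c_p c_p^{T}\succeq 0$, and observe that applying $\Phi$ yields $f - \lambda p^2 \in \Sigma_{2k}$. The inclusion ``$\subseteq$'' is direct: if $f-\lambda p^2=\sum_i q_i^2$, then $G:=\lambda c_pc_p^{T}+\sum_i c_{q_i}c_{q_i}^{T}\in \gram(f)$ contains $c_p$ in its image. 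Given this description, the subspace property in (e) is a one-line convexity argument: if $c_p\in \im G_p$ and $c_q\in \im G_q$, then $G:=\tfrac{1}{2}(G_p+G_q)\in \gram(f)$ has $\im G = \im G_p + \im G_q$, so $\alpha c_p + \beta c_q\in \im G$. The same convexity argument proves the first half of (f), namely that the sum of two facial subspaces is facial; then, since facial subspaces live in the finite-dimensional space $\R[X]_{k}$ and are upward-directed under sum, there is a unique maximal facial subspace $U_{\gram(f)}$. Combining with the lemma gives $U_{\gram(f)} = \bigcup_G \im G = \sosupp f$.

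I do not expect any real obstacle here; the proposition is essentially a dictionary entry translating standard PSD face theory into the language of SOS representations. The only subtle point is being careful that the identification $c \mapsto [X]_k^{T}c$ is injective (which is automatic since $[X]_k$ has linearly independent entries), so that talking about $\im G$ as a subspace of $\R[X]_k$ is well-defined. A small bookkeeping point I would flag is that in (e) one must verify closure under scalar multiples of either sign: for $\lambda\neq 0$, $p\in\sosupp f$ clearly implies $\lambda p\in\sosupp f$ since $f-(\lambda^{-2}\mu)(\lambda p)^2 = f-\mu p^2$.
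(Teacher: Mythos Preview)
Your argument is correct and follows exactly the route the paper points to: the paper gives no proof of its own here, merely citing Ramana--Goldman on the facial structure of spectrahedra and Scheiderer for the formulation, and your write-up spells out precisely that content. One verbal slip: in the ``$\supseteq$'' direction of your guiding lemma you want $\lambda>0$ \emph{small enough} (or the \emph{maximal} such $\lambda$), not minimal---the set $\{\lambda\ge 0\mid G-\lambda c_pc_p^{T}\succeq 0\}$ is an interval $[0,\lambda_{\max}]$ with $\lambda_{\max}>0$ when $c_p\in\im G$---but the intended argument is clear; also, in (d) you should record the easy inclusion $U_{F'}\subseteq U_F$ (immediate from the description in (a)) before concluding strict inequality of dimensions from $U_{F'}\neq U_F$.
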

\begin{proof}
	Cf. the work of Ramana and Goldman \cite{Ramana_Goldman_1995} on the facial structure of (arbitrary) spectrahedra. This formulation loosely follows Scheiderer \cite[Section 2]{Scheiderer_2022}.
\end{proof}

Recall that for a point $ x $ in some convex set $ C $, the \emph{supporting face} $ \suppf x $ of $ x $ is defined to be the minimal face of $ C $ containing $ x $. For a face $ F $ of $ C $ it holds $ x\in \relint F $, if and only if $ F $ is the supporting face of $ x $. 

\subsection{Powers-of-forms decomposition}\label{sec:pof-decomposition-setting} Throughout, we will consider third order powers-of-forms decomposition, as introduced in \eqref{eq:pof-statement-into}. Let us restate the standard setting of third order POF decomposition used in this paper:  

\begin{align}\label{eq:pof-statement-prelims}
	(\pof)_{f, m, k} \qquad \given \qquad &f_0,f_1,f_2,f_3 \text{ of degrees } 0,k,2k,3k,\\
	\qquad \find \qquad &q_1,\ldots,q_m \in \R[X]_{k}, \nonumber \\
				 \qquad &\lambda_1,\ldots,\lambda_m \in \R_{\ge 0}, \nonumber \\
	\qquad \subjto \qquad &f_d = \sum_{i = 1}^{m} \lambda_i q_i^d, \qquad d =0,1,2,3.\nonumber
\end{align}
Note that in order to recover both the addends and the weights, it is necessary to use power sums of at least two different orders. Our main results, \Cref{thm:main-result} and \Cref{algo:pof-recovery-general}, are ``minimal'' in the sense that they only make use of the power sums $ f_2 $ and $ f_3 $. In some applications, it is canonical and useful to have $ f_1 $ as well. This is explained in \Cref{sec:applications}. 

\begin{defi}\label{def:waring-rank-and-length}  
	Let $ k, d\in \N $ and $ f\in \R[X]_{dk} $. There exists a smallest number $ m\in \N $ such that $ f $ has a $ k $-\emph{Waring decomposition} of \emph{rank} $ m $, i.e., a decomposition 
	\begin{align}
		f = \sum_{i = 1}^{m} \sigma_i q_i^d
	\end{align}	
	of $ f $ as a signed sum of $ m $ $ d $-th powers of $ k $-forms $ q_1,\ldots,q_m $, with \emph{signs} $ \sigma_1,\ldots,\sigma_m\in \{\pm 1\} $. This $ m $ is called the (real) $ k $-\emph{Waring rank} of $ f $. 
	For odd $ d $, the signs can be omitted. For even $ d $, let us define the $ k $-\emph{length} of $ f $ as the smallest number $ m\in \N\cup \{\infty\} $ such that $ f $ has a $ k $-Waring decomposition of length $ m $, with all signs being positive. We denote it by $ \length_{k} f $ and understand it as $ \infty $, whenever there is no such decomposition. In the case $ d = 2 $, $ \length f := \length_{2} f $ is called the \emph{Sum-of-Squares length}, or simply the \emph{length} of $ f $. The $ k $-Waring rank of a \emph{generic} $ kd $-form in $ n $ variables is denoted $ \rk_{k}^{\circ}(n, kd) $. 
\end{defi}

\subsection{Powers of linear forms}

The case $ k = 1 $ of powers of linear forms is comparatively well-understood. A classical uniqueness result for cubic forms of very low rank is known due to Jennrich (via Harshman \cite{Harshman_1970}). There exist efficient methods to extract the linear forms, c.f. Anandkumar, Ge, Hsu, Kakade and Telgarsky \cite{Anandkumar_Ge_Hsu_Kakade_Telgarsky_2012}. 

\begin{theorem}\label{thm:undercomplete-waringdecomp} (cf. e.g. \cite{Leurgans_Ross_Abel_1993},\cite{Anandkumar_Ge_Hsu_Kakade_Telgarsky_2012})
	There exists an algorithm that, on input $ n \in \N$ and forms $ f_2 $, $ f_3 $ of degrees $ 2 $ and $ 3 $, respectively, computes the solution to the following problem: If $ f_2, f_3 $ have a POF decomposition
	\begin{align}\label{eq:undercomplete-waringdecomp-1}
		f_d = \sum_{i = 1}^{m} \lambda_i \ell_i^d
	\end{align} 
	such that $ \ell_{1},\ldots,\ell_{m} $ are linearly independent and $ \lambda_1,\ldots,\lambda_m\in \R\setminus\{0\} $, then compute $ {(\ell_{1},\lambda_1),\ldots,(\ell_{m},\lambda_m)} $. Under these conditions, \eqref{eq:undercomplete-waringdecomp-1} is the unique minimum rank POF decomposition of $ (f_2, f_3) $ and the only POF decomposition with linearly independent addends. 
\end{theorem}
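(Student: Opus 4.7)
My plan is to prove this via the classical simultaneous diagonalization argument (Jennrich's algorithm). First, identify a degree-$1$ form with its coefficient vector, so that the hypothesis $f_d = \sum_i \lambda_i \ell_i^d$ corresponds to
\[
M := M(f_2) = \sum_{i=1}^m \lambda_i v_i v_i^T, \qquad T := T(f_3) = \sum_{i=1}^m \lambda_i v_i^{\otimes 3},
\]
where $M \in \R^{n \times n}$ is a symmetric matrix, $T \in \R^{n \times n \times n}$ is a symmetric $3$-tensor, and $v_i \in \R^n$ is the coefficient vector of $\ell_i$. Under the hypothesis that $v_1, \dots, v_m$ are linearly independent and all $\lambda_i \neq 0$, the matrix $M$ has rank exactly $m$ and $V := \im M = \vspan(v_1, \dots, v_m)$. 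Hence both $m$ and $V$ can be read off from $f_2$ alone by pure linear algebra.

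For the main step, observe that for any $a \in \R^n$ the contracted slice $T_a := T(a, \cdot, \cdot) = \sum_i \lambda_i (v_i^T a)\, v_i v_i^T$ is symmetric with image contained in $V$. Choose two vectors $a, b \in \R^n$ generically and view $T_a|_V, T_b|_V$ as $m \times m$ symmetric matrices in a fixed basis of $V$. Let $\tilde V$ denote the invertible $m \times m$ matrix whose columns are the coordinates of the $v_i$ in this basis. Then
\[
T_a|_V = \tilde V D_a \tilde V^T, \qquad T_b|_V = \tilde V D_b \tilde V^T,
\]
with $D_a = \diag(\lambda_i v_i^T a)$ and $D_b = \diag(\lambda_i v_i^T b)$, both invertible for generic $a, b$. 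Consequently
\[
(T_a|_V) (T_b|_V)^{-1} = \tilde V \diag\!\bigl(v_i^T a / v_i^T b\bigr) \tilde V^{-1},
\]
so the eigenvectors of this matrix are precisely the $v_i$ (up to scaling) and the eigenvalues $v_i^T a / v_i^T b$ are pairwise distinct for generic $a, b$. An ordinary eigendecomposition therefore recovers the directions $v_i$ uniquely, up to scaling and permutation. Matching the rank-one components of $M$, i.e.\ solving the linear system $M = \sum_i \lambda_i v_i v_i^T$ after an arbitrary normalization of each $v_i$, produces the $\lambda_i$ and the correct magnitudes simultaneously; signs of the $v_i$ are then pinned down by comparing against $f_3$.

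Uniqueness is essentially a corollary of the construction: any decomposition of $(f_2, f_3)$ with linearly independent addends and nonzero weights must share the same $V = \im M$, hence the same slices $T_a|_V, T_b|_V$, and therefore the same eigenvectors and eigenvalues of the pencil $(T_a|_V)(T_b|_V)^{-1}$. Minimality of the rank follows because any POF decomposition (even with linearly dependent addends) has rank at least $\rk M = m$. The main technical obstacle is to justify the genericity conditions: the bad locus in $(\R^n)^2$ where either $D_b$ is singular or some eigenvalue ratios coincide is cut out by explicit polynomials in $v_i^T a$ and $v_i^T b$, and linear independence of the $v_i$ guarantees these polynomials are not identically zero. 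Hence a generic (or simply random) choice of $a, b$ avoids the bad locus, and the algorithm succeeds on such a choice. The routine but slightly tedious point is to present this robustly enough that the ``efficient'' guarantee of the theorem holds—e.g.\ by retrying on failure or sampling from a continuous distribution.
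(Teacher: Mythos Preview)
Your proposal is correct and follows essentially the same simultaneous-diagonalization (Jennrich) approach as the paper's proof in Appendix~A. The only cosmetic difference is that the paper pairs a single random contraction of $f_3$ against $f_2$ via the generalized eigenvalue problem $\det(M_v-\mu M)=0$, whereas you pair two random contractions of $f_3$ via $(T_a|_V)(T_b|_V)^{-1}$; both variants recover the $v_i$ as eigenvectors and then fix scalings and weights by linear algebra, and the uniqueness/minimality arguments coincide.
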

\begin{proof}
	The proof is deferred to \Cref{app:polinearforms}.
\end{proof}

\section{Basic Algorithm for PoF decomposition}\label{sec:pof-algo}

\subsection{Overview of ideas and techniques}\label{sec:decomposition-algorithms}

The main result of this paper is a recovery algorithm for the addends and weights of a third order powers-of-forms decomposition, as described in \Cref{sec:pof-decomposition-setting}. It is simultaneously also an algorithmic proof of uniqueness of the decomposition, and can thus be seen as a generalization of the classical result, \Cref{thm:undercomplete-waringdecomp}, commonly attributed to Jennrich \cite{Harshman_1970}. Some of the conditions impose implicit constraints on the rank of the POF decomposition. \Cref{sec:interpretation} discusses these implications in detail and \Cref{sec:trace-free-quadratics-study} proves uniqueness of the POF decomposition for some concrete examples. 

The algorithm combines two simple ideas: First, we aim to recover the space $ \langle q_1,\ldots,q_m \rangle $ spanned by the addends. Note that this is a trivial task for $ k=1 $, but for $ k\ge 2 $ it is not. Then, given a basis $ u_1,\ldots,u_m $ for the space $ \langle q_1,\ldots,q_m \rangle $, note that sometimes it is possible to reduce the $ k $-Waring decomposition problem to a $ 1 $-Waring decomposition problem. Let us start by explaining this second idea:

\subsubsection{Second idea: Reduction to $ k = 1 $} $ \R[q_1,\ldots, q_m] $ is an algebra graded by $ \frac{1}{k} $ times the degree and the kernel of the graded algebra homomorphism
\begin{align}
	\varphi\colon \R[Y_1,\ldots, Y_m] \to \R[q_1,\ldots, q_m], Y_1\mapsto u_1,\ldots, Y_m\mapsto u_m
\end{align}
is the ideal $ \irel(u_1,\ldots,u_m) $ of algebraic relations of $ u_1,\ldots,u_m $, which, via a change of coordinates, translates to the ideal of relations of $ q_1,\ldots,q_m $. If $ \irel(q_1,\ldots,q_m) $ does not contain forms of degree at most $ 3 $, then the restriction $ \varphi_{\leq 3} $ of $ \varphi $ to $ \R[Y_1,\ldots,Y_m]_{\leq 3} $ is an invertible linear map onto its image $ \R[q_1,\ldots,q_m]_{\leq 3} $. The inverse map $ \varphi_{\leq 3}^{-1} $ must map the $ k $-forms $ q_1,\ldots,q_m $ in $ X_1,\ldots,X_n $ to some linear forms $ \ell_{1},\ldots,\ell_{m} $ in $ Y_1,\ldots,Y_m $. One easily sees that for $ d\in \{1,2,3\} $:
\begin{align}
	g_d := \varphi_{\leq 3}^{-1}(f_d) = \sum_{i = 1}^{m} \ell_i^d
\end{align}
admit a joint decomposition as powers of \emph{linear} forms. From that, a classical algorithm based on eigenvalue decomposition can be used, which is described in \Cref{thm:undercomplete-waringdecomp} and \Cref{app:polinearforms}. Note that the inverse of $ \varphi_{\leq 3} $ can be computed: Since the $ d $-fold products $ (u^{\alpha})_{|\alpha|=d} $ of entries of $ u $ form a basis of $ \R[U]_{d} $ for $ d = 1,2,3 $, there exist unique coefficients $ (c_{\alpha})_{|\alpha|=d} $ such that 
\begin{align}
	f_d = \sum_{|\alpha|=d} c_{\alpha} u^{\alpha}, \quad (d = 1,2,3),
\end{align}
which can be obtained by linear system solving. Then, $ g_d = \sum_{|\alpha|=d} c_{\alpha} Y^{\alpha} $. 

\medskip
It is easy to see that the invertibility condition is a generic property at least as long as $ m \leq n+1 $, but e.g. for quadratics $ q_1,\ldots,q_m\in \R[X]_{2} $, it also holds for some $ m=\Theta(n^2) $. This is discussed in \Cref{sec:first-condition}. 

\subsubsection{First idea: Space recovery}\label{sec:idea-space-recovery} To recover a basis of the space $ \langle q_1,\ldots,q_m \rangle $, we make heuristic use of the Gram spectrahedron. 
By \Cref{prop:gram-spec}, there is a superspace $ U_F $ of $ \langle q_1,\ldots,q_m \rangle $ associated with every face $ F $ of $ \gram(f_2) $ containing the Gram matrix $ G(q) $ of the representation $ f_2 = \sum_{i = 1}^{m} q_i^{2} $. In particular, one of these subspaces is equal to $ \langle q_1,\ldots,q_m \rangle $. It corresponds to the supporting face of $ G(q) $. 

It is not clear whether the faces containing $ G(q) $ are accessible to us from input $ f_2, f_3 $.  Fortunately, there are many cases where $ \gram(f_2) $ has a particularly simple structure. The simplest possible case is when $ f_2 $ is uniquely Sum-of-Squares representable. Then, $ \gram(f_2) = \{G(q)\} $ is a singleton. It suffices to compute the unique Gram matrix $ G = G(q) $ of $ f_2 $ and its image will give the space of $ q_1,\ldots,q_m $. The second simplest case is when $ G(q) \in \relint \gram(f_2) $: Then, while there might be several nonequivalent Sum-of-Squares representations, the space $ \langle q_1,\ldots,q_m \rangle $ is still accessible to us, since we can compute a relative interior point of $ \gram(f_2) $ with an interior point solver for SDPs. 
If $ f_2 $ is constructed from (not too many) \emph{generic} addends $ q_1,\ldots,q_m $, then $ G(q) \in \relint \gram(f_2) $ is in fact equivalent to $ \gram(f_2) = \{G(q)\} $, cf. \Cref{prop:unique-sos-representations}. In both these cases, it holds $ \sosupp f_2 = \langle q_1,\ldots,q_m \rangle $ by \Cref{prop:gram-spec}(f). In all other cases, note that one may still take the potentially larger space $ \sosupp f_2 $ as an ``upper approximation'' for the space $ \langle q_1,\ldots,q_m \rangle $, and hope for the best.  

To justify that the approach is reasonable, we will show that there are sufficiently many choices of $ q_1,\ldots,q_m $, such that their second order power sum $ f_2 $ is uniquely Sum-of-Squares representable. This is done in \Cref{sec:interpretation}. 


\subsection{Algorithms} 

The procedure to recover the POF decomposition is described in \Cref{algo:pof-recovery-general}. The following theorem is a uniqueness result for POF decomposition, derived as a consequence. Note that for the first read, it is instructive to have the case in mind where $ f_2 = \sum_{i = 1}^{m} \lambda_i q_i^2 $ is uniquely Sum-of-Squares representable. In this case, $ N = m $ in both \Cref{thm:main-result} and \Cref{algo:pof-recovery-general}, and it holds $ U = \langle q_1,\ldots,q_m \rangle $. The condition $ \dim  \R[U]_{3k} = \binom{m+2}{3} $ is then equivalent to $ \irel(q_1,\ldots,q_m)_{3} = \{0\} $, which, according to \cite[Section 6.4]{Bafna_Hsieh_Kothari_Xu_2022}, is satisfied for generic $ q_1,\ldots,q_m $ with $ m=\Theta(n^2) $. Note that a tentative implementation of \Cref{algo:pof-recovery-general}, with an example Julia notebook, can be found on GitHub. See \cite{Taveira_Blomenhofer_Pofdecomp_Github_2022}.

\begin{theorem}\label{thm:main-result}
	Let $ k\in \N $ and let $ f_2, f_3 $ be forms of degree $ 2k $ and $ 3k $, respectively. Denote $ U:= \sosupp f_2 $ and $ N:=\dim U $. Assume that the graded component $ \R[U]_{3k} $ has dimension $ \binom{N+2}{3} $. 
	Then, $ f_2 $ and $ f_3 $ have at most one joint POF decomposition
	\begin{align}
		f_d = \sum_{i = 1}^{m} \lambda_i q_i^d, \qquad d=2,3,
	\end{align}
	with positive weights $ \lambda_1,\ldots,\lambda_{m} > 0 $, $ m\in \N $ and linearly independent $ q_1,\ldots,q_m $. Furthermore, if such a decomposition exists, then \Cref{algo:pof-recovery-general} computes it efficiently and it is the unique minimum rank POF decomposition of $ (f_2, f_3) $. 
\end{theorem}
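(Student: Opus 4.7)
The plan follows the two-idea strategy outlined in \Cref{sec:decomposition-algorithms}: first recover the ambient linear span of the addends from $f_2$ alone, then reduce to the classical case of linear forms to recover the individual addends using $f_3$. The opening observation is that every positive-weight POF decomposition $f_2 = \sum_{i=1}^m \lambda_i q_i^2$ forces $q_i \in U := \sosupp f_2$, since $f_2 - \lambda_i q_i^2 = \sum_{j \neq i} \lambda_j q_j^2$ is a sum of squares for any $i$. Hence the addends live in the $N$-dimensional subspace $U$, which by \Cref{prop:gram-spec}(a),(f) equals the image of any Gram matrix in the relative interior of $\gram(f_2)$ and can therefore be extracted by a standard interior-point SDP solver.

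Next, fix a basis $u_1, \ldots, u_N$ of $U$ and consider the graded algebra homomorphism $\varphi \colon \R[Y_1, \ldots, Y_N] \to \R[u_1, \ldots, u_N]$ sending $Y_i \mapsto u_i$, with the target graded by $\tfrac{1}{k}$ times the polynomial degree. The hypothesis $\dim \R[U]_{3k} = \binom{N+2}{3}$ says exactly that $\varphi$ is injective on $\R[Y]_{\leq 3}$, hence a linear bijection onto its image. Consequently, $g_d := \varphi^{-1}(f_d)$ is a well-defined degree-$d$ form in $Y_1, \ldots, Y_N$ for $d = 2, 3$, computable by solving the linear system $f_d = \sum_{|\alpha|=d} c_\alpha u^\alpha$ and reading off $g_d = \sum_{|\alpha|=d} c_\alpha Y^\alpha$. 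Given any POF decomposition $f_d = \sum_i \lambda_i q_i^d$ with $q_i \in U$, expanding $q_i = \sum_j c_{ij} u_j$ and setting $\ell_i := \sum_j c_{ij} Y_j$ yields $g_d = \sum_i \lambda_i \ell_i^d$. Moreover, $q_1, \ldots, q_m$ are linearly independent in $U$ if and only if $\ell_1, \ldots, \ell_m$ are linearly independent in $\R[Y]_1$, because the $u_j$ form a basis.

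I then apply \Cref{thm:undercomplete-waringdecomp} (Jennrich) to $(g_2, g_3)$: linear independence of $\ell_1, \ldots, \ell_m$ together with nonzero weights implies that their decomposition is the unique minimum-rank POF decomposition of $(g_2, g_3)$ and is recovered efficiently by simultaneous diagonalization. Pushing the recovered pairs $(\ell_i, \lambda_i)$ forward via $\varphi$ returns $(q_i, \lambda_i)$. Uniqueness transfers: any alternative positive-weight LI decomposition of $(f_2, f_3)$ again has addends in $U$, hence lifts to a competing LI decomposition of $(g_2, g_3)$, ruled out by Jennrich. The minimum-rank claim is analogous: any shorter positive-weight decomposition of $(f_2, f_3)$ would lift to a shorter decomposition of $(g_2, g_3)$, contradicting Jennrich's minimum-rank conclusion.

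The principal obstacle is conceptual rather than technical: one must trust that the SDP isolates exactly the right subspace. If the hypothesis $\dim \R[U]_{3k} = \binom{N+2}{3}$ fails, the linear-algebraic inversion breaks down; conversely, once it holds, the argument above is essentially mechanical. The real work of the paper therefore lies not in proving this theorem but in identifying classes of instances where the hypothesis is satisfied, and in particular where $\sosupp f_2$ coincides with $\langle q_1, \ldots, q_m \rangle$ so that $N = m$.
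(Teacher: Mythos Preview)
Your proof is correct and follows essentially the same approach as the paper: both use $\sosupp f_2$ to confine all addends to $U$, invert the evaluation map $\varphi_{\leq 3}$ using the dimension hypothesis, and invoke \Cref{thm:undercomplete-waringdecomp} on the resulting $(g_2,g_3)$. The only minor point glossed over is that the hypothesis literally controls degree-$3$ relations, and one needs the observation (made explicit in the paper) that the ideal of relations is homogeneous, so the absence of degree-$3$ relations forces the absence of degree-$\leq 2$ relations as well.
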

\begin{proof}
	Assume there are two distinct POF decompositions, the left one of which had linearly independent addends,
	\begin{align}
		\sum_{i = 1}^{m} \lambda_i q_i^d = f_d = \sum_{i = 1}^{m'} \mu_i p_i^d, \quad d = 2,3,
	\end{align}
	with positive weights $ \lambda_i, \mu_i $ and $ m, m' \in \N $. Then by \Cref{prop:gram-spec}, it holds that 
	\begin{align}
		\langle q_1,\ldots,q_{m} \rangle \subseteq U \supseteq \langle p_1,\ldots,p_{m'} \rangle.
	\end{align} 
	The linearly independent system $ q_1,\ldots,q_{m} $ can therefore be extended to a basis $ u = (q_1,\ldots,q_{m},u_{m+1},\ldots,u_N) $ of $ U $. By assumption on the dimension of $ \R[U]_{3}$, there are no algebraic relations of degree $ 3 $ between linearly independent elements of $ U $. Since these relations form a homogeneous ideal, there are also no relations of degree \emph{at most} $ 3 $. 
	For the evaluation map
	\begin{align}
		\varphi\colon \R[Y_1,\ldots,Y_N] \to \R[u_1,\ldots,u_N], Y_i\mapsto u_i \quad (i = 1,\ldots, N),
	\end{align}
	which sends forms of degree $ d\in \N_0 $ to forms of degree $ kd $, the restriction $ \varphi_{\leq 3} $ to $ \R[Y]_{\leq 3} $ is therefore invertible. The inverse $ \varphi_{\leq 3}^{-1} $ maps
	$ f_2 $ and $ f_3 $ to quadratic and cubic forms $ g_2 $ and $ g_3 $, respectively, which admit decompositions
	\begin{align}
		g_d = \sum_{i = 1}^{m} \lambda_i X_i^d = \sum_{i = 1}^{m'} \mu_i \ell_{i}^d, \qquad (d = 2,3)
	\end{align}
	where $ \ell_{i} := \varphi_{\leq 3}^{-1}(p_i) $. However, \Cref{thm:undercomplete-waringdecomp} shows uniqueness of the rank-$ m $ POF decomposition for $ g_2 $ and $ g_3 $. Precisely, \Cref{thm:undercomplete-waringdecomp} implies that $ m'\ge m $ and if $ m=m' $, then, up to reordering, $ \lambda_i = \mu_i $ and $ X_i = \ell_{i} $ for all $ i\in \{1,\ldots,m\} $. Substituting back via $ \varphi $ yields $ q_i = p_i $ for all $ i\in \{1,\ldots,m\} $. 
\end{proof}

\begin{algorithm}[H]
	\caption{Semidefinite algorithm for powers-of-forms decomposition.}
	\label{algo:pof-recovery-general}
	\raggedright
	\textbf{Input: } $ k\in \N $ and forms $ f_2\in \R[X]_{2k}, f_3\in \R[X]_{3k} $.\\
	\textbf{Assumptions: } 
	\begin{enumerate}
		\item $ f_d $ have a joint POF decomposition $ f_d = \sum_{i = 1}^{m} \lambda_i q_i^d $ for $ d = 2, 3 $, where $ q_1,\ldots,q_m \in \R[X]_{k}$ are linearly independent $ k $-forms, $ m\in \N $ and $ \lambda_1,\ldots,\lambda_m \in \R_{>0} $.
		\item For $ U:= \sosupp f_2 $ and $ N:=\dim U $,  $ \R[U]_{3} $ has dimension $ \binom{N+2}{3} $.  
	\end{enumerate}
	\textbf{Output: }  $ \{(q_1,\lambda_1),\ldots,(q_m,\lambda_m)\} $ \\
	\textbf{Procedure: }
	
	\begin{algorithmic}[1]
		\STATE{Use $ f_2 $ and an interior point SDP solver to compute some basis $ u = (u_1,\ldots,u_{N}) $ of $ U $. This can be done by computing some $ G\in \relint \gram(f) $ and a basis of $ \im G $, see \Cref{prop:gram-spec}. Cf. \Cref{app:pointed-sos-cones} for the SDP formulation. }  
		\STATE{The linear system
		\begin{align*}
			& f_d = \sum_{|\alpha| = d} c_{\alpha} u^{\alpha}, \qquad c_{\alpha} \in \R, \qquad (\alpha\in \N_0^N, |\alpha| = d)
		\end{align*}
		has a unique solution $ c_d = (c_\alpha)_{|\alpha| = d} \in \R[Y_1,\ldots,Y_N]_{d} $ for both $ d = 2 $ and $ d = 3 $. Compute it and set
		\begin{align*}
			g_d := \sum_{|\alpha| = d} c_{\alpha} Y^{\alpha}. 
		\end{align*}
		Note that $ g_d = \varphi_{\leq 3}^{-1}(f_d) $, where 
		\begin{align*}
			\varphi_{\leq 3} \colon \R[Y_1,\ldots,Y_N]_{\leq 3} \to \R[u_1,\ldots,u_N]_{\leq 3}, Y_i \mapsto u_i \qquad (i = 1,\ldots,N).
		\end{align*}}
		\STATE{For degree reasons and since the map $ \varphi_{\leq 3} $ is the restriction of an algebra homomorphism, there exist unique linearly independent linear forms $ \ell_{1} = \varphi_{\le 3}^{-1}(q_1),\ldots,\ell_{m}=\varphi_{\le 3}^{-1}(q_m) $ such that $ g_d = \sum_{i = 1}^{m} \lambda_i \ell_{i}^d  $ for $ d = 2,3 $.}
		\STATE{Compute $\{(\ell_{1},\lambda_1),\ldots,(\ell_{m},\lambda_m)\} $ with the algorithm from \Cref{thm:undercomplete-waringdecomp}. }
		\RETURN $ \{(\varphi_{\leq 3}(\ell_{1}),\lambda_1),\ldots,(\varphi_{\leq 3}(\ell_{m}),\lambda_m) \}$. 
	\end{algorithmic}
\end{algorithm}


\pagebreak
\section{Interpretation of Requirements}\label{sec:interpretation}

\subsection{Real Geometry Viewpoint}\label{sec:real-geometry}

Let us try to understand the conditions of \Cref{thm:main-result} in terms of geometrical properties of the addends $ (q_1,\ldots,q_m) \in \R[X]_{k}^m $. Throughout this section, the addends are assumed  to be generic forms. Our goal is twofold: On one hand, we want to formulate geometric criteria that are sufficient for the recovery. On the other hand, we want to justify our seemingly heuristic usage of the Gram spectrahedron. To this end, we will, for instance, examine the values of $ m $, for which uniquely representable sums of squares occur for typical choices of addends. In this section, we will prove \Cref{cor:pof-recovery-intro-gram}, \Cref{cor:pof-recovery-intro-real-variety} and \Cref{cor:pof-recovery-intro-real-variety-m=n-1}. Note that \Cref{cor:pof-recovery-intro-gram} is actually a direct consequence of \Cref{thm:main-result}: 

\begin{proof}[Proof of \Cref{cor:pof-recovery-intro-gram}] 
	If $ f_2 = \sum_{i = 1}^{m} \lambda_i q_i^2 $ is uniquely Sum-of-Squares representable, then the space $ U := \sosupp f_2 $ equals $ \langle q_1,\ldots,q_m \rangle $ by \Cref{prop:gram-spec}. Since $ q_1,\ldots,q_m $ do not satisfy any algebraic relations of degree $ 3 $, the space $ \R[U]_{3} $ has dimension $ \binom{m+2}{3} $ and \Cref{thm:main-result-intro} yields the claim. 
\end{proof}

In fact, one sees that \Cref{thm:main-result} yields uniqueness of the POF decomposition $ f_d = \sum_{i = 1}^{m} \lambda_i q_i^d, d = 2,3 $ under these two simplified conditions: 
\begin{align}
	&\circ\quad \text{There are no algebraic relations of $ q_1,\ldots,q_m $ of degree at most $ 3 $. } \label{eq:condition-algrel}\\
	&\circ\quad \sosupp(\sum_{i = 1}^{m} q_i^2) = \langle q_1,\ldots,q_m \rangle \label{eq:condition-sos}
\end{align}
The positive weights $ \lambda_1,\ldots,\lambda_m $ matter for neither of these conditions, which is why we omit them throughout the section. Let us now discuss \eqref{eq:condition-algrel} and \eqref{eq:condition-sos}.

\subsubsection{First condition: Algebraic relations of general $ k $-forms}\label{sec:first-condition}
Condition \eqref{eq:condition-algrel} is the easier one: General choices of $ q_1,\ldots,q_m $ will not have any algebraic relations at all as long as $ m\leq n $. However, since we are only interested in degree-$ 3 $ relations, the maximum value $ \beta_{k}(n) $ of $ k $-forms $ q_1,\ldots,q_{\beta_{k}(n)} \in \R[X]_{k} $ such that $ \irel(q_1,\ldots,q_{\beta_{k}(n)})_{\leq 3} = \{0\} $ could potentially be much higher, with the obvious bound $ \beta_{k}(n) \leq \binom{n+k-1}{k} $ from linear relations. In \cite[Section 6.4]{Bafna_Hsieh_Kothari_Xu_2022}, the authors give a combinatorial proof that $ \beta_{2}(n) =\Theta(n^2) $. I am not aware of any other reference for this statement. 
A numerical study combined with OEIS suggests that $ \beta_{2}(n) $ is at least $ \lceil \frac{(n+2)(n+1)}{6} \rceil$, with the lower bound being obtained from the explicit instance 
\begin{align}
	q_{ijk} := (X_i + X_j + X_k)^2, \qquad (i\leq j\leq k, i+j+k \equiv_{n} 0)
\end{align}
Note that this explicit instance would give a lower bound for generic rank-$ r $ quadratics $ q_1,\ldots,q_m $ of all ranks $ r \in \{1,\ldots,n\} $, by choosing $ \mathcal{D} := \mathcal{D}_{r} $ as the class of quadratic forms of rank $ r $ in the subsequent \Cref{prop:algindep-sequence}. 

\begin{prop}\label{prop:algindep-sequence}
	Let $ K \in \{\R, \C\} $ and $ m, n, k, d\in \N_0 $. Let $ \mathcal{D}\subseteq K[X]_{k} $ an irreducible variety containing $ m $ distinct forms that do not satisfy any relations of degree $ kd $. Let $ q_1,\ldots, q_m $ general in $ \mathcal{D} $. Then also $ \irel(q_1,\ldots,q_m)_{kd} = \{0\} $.
\end{prop}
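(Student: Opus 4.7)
My plan is to show that the condition $\irel(q_1,\ldots,q_m)_{kd} = \{0\}$ cuts out a Zariski open subset of $\mathcal{D}^m$ and then invoke irreducibility of $\mathcal{D}$ to conclude density. Concretely, for any tuple $q = (q_1,\ldots,q_m) \in K[X]_{k}^m$, I would consider the linear evaluation map
\begin{align*}
\Phi_q\colon K[Y_1,\ldots,Y_m]_{d} \to K[X]_{kd}, \quad h\mapsto h(q_1,\ldots,q_m),
\end{align*}
whose kernel is precisely the homogeneous component of $\irel(q_1,\ldots,q_m)$ corresponding to degree $kd$ in $X$ (equivalently, degree $d$ in $Y$). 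The desired condition is thus equivalent to injectivity of $\Phi_q$.

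Next, I would fix monomial bases of source and target and note that each entry of the matrix representing $\Phi_q$ is a polynomial in the coefficients of $q_1,\ldots,q_m$, since $h(q_1,\ldots,q_m)$ depends polynomially on the $q_i$. Injectivity of $\Phi_q$ is equivalent to the non-vanishing of at least one of the maximal minors of this matrix; in particular, the ``bad locus''
\begin{align*}
\mathcal{B} := \{q\in K[X]_{k}^m \mid \Phi_q \text{ is not injective}\}
\end{align*}
is Zariski closed, cut out by the simultaneous vanishing of all maximal minors. By hypothesis, $\mathcal{D}$ contains $m$ distinct forms that avoid $\mathcal{B}$, so $\mathcal{D}^m \setminus \mathcal{B}$ is a nonempty Zariski open subset of $\mathcal{D}^m$. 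Since $\mathcal{D}$ is irreducible, so is $\mathcal{D}^m$, and hence $\mathcal{D}^m \setminus \mathcal{B}$ is Zariski dense in $\mathcal{D}^m$, which is exactly what is required for the generic statement.

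The only thing to be careful about is the field setting: for $K = \C$ irreducibility of $\mathcal{D}^m$ is the standard fact that a product of irreducible varieties over an algebraically closed field is irreducible, while for $K = \R$ I would either argue directly that a nonempty Zariski open subset of an irreducible $\R$-variety is Zariski dense, or pass to the complexification (noting that $\Phi_q$ and its minors are defined over $\Z$, so $\mathcal{B}$ is defined over $\R$ and the given real witness certifies nonemptiness of the complement over $\C$ as well). I do not expect any further obstacle: the whole argument reduces to the standard semicontinuity principle that ``injectivity of a linear map depending polynomially on parameters is a Zariski open condition.''
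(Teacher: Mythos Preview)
Your proposal is correct and follows essentially the same approach as the paper: both show that the locus of tuples admitting a nontrivial degree-$d$ relation is Zariski closed, then use the given witness together with irreducibility of $\mathcal{D}^m$ (reducing to $K=\C$) to conclude that the complement is dense. The only cosmetic difference is that the paper phrases the closedness via an incidence variety $W_{m,d}\subseteq \P(\C^N\times S^k(\C^n)^m)$ and projection to the $q$-coordinates, whereas you argue directly with the maximal minors of the matrix of $\Phi_q$; these are two standard ways of expressing the same semicontinuity fact.
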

\begin{proof} 
	Let $ d\in \N $ and $ N := \{\alpha\in \N_0^m\mid |\alpha|=d\} $. We show that there are no algebraic relations in degree $ kd $ over $ K = \C $, which clearly also shows the claim over $ K = \R $. Consider the variety
	\begin{align}
		W_{m, d} = \{[\lambda: q] \in \P(\C^{N} \times S^k(\C^n)^m) \mid \sum_{\alpha\in \N_0^m, |\alpha| = d} \lambda_{\alpha}q^{\alpha} = 0 \}  
	\end{align}
	consisting of pairs of $ (q_1,\ldots, q_m) $ and the coefficients $ \lambda $ of relations in between them. Let $ \pi $ denote  the projection to the $ q $-coordinates and consider any tuple of forms $ q = (q_1,\ldots, q_m) $. Then the fiber $ \pi^{-1}(\{q\}) $ is a (projective)  subspace of $ \P(\C^{N}) $ corresponding to the algebraic relations of $ q_1,\ldots, q_m $ in degree $ d $. The set of points $ q $ such that $ \pi^{-1}(\{q\}) $ is nonempty is Zariski closed. Thus if we find a specific point $ q $ such that  $ \pi^{-1}(\{q\}) $ is empty, it will be empty on a Zariski open neighbourhood of $ q $. But such a specific sequence $ q=(q_1,\ldots,q_m)  \in \mathcal{D} $ exists by assumption.
\end{proof}


\begin{conjecture}\label{conj:algindep-rank1-quadratics} The family 
	\begin{align}
		q_{ijk} := (X_i + X_j + X_k)^2, \qquad (i\leq j\leq k, i+j+k \equiv 0 \mod n)
	\end{align}
	of $ m = \lceil \frac{(n+2)(n+1)}{6} \rceil $ quadratics in $ n $ variables $ X = (X_1,\ldots,X_n) $ does not satisfy any algebraic relations of degree $ 3 $. 
\end{conjecture}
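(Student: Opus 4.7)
The plan is to reformulate the conjecture as a linear independence statement in $\R[X]_{6}$ and then attempt a combinatorial separation argument. Write $L_{ijk} := X_i + X_j + X_k$, so $q_{ijk} = L_{ijk}^{2}$. A degree-$3$ algebraic relation among the $q_{ijk}$'s takes the form $\sum_{T} c_{T} \prod_{t \in T} q_{t} = 0$, where $T$ ranges over the $\binom{m+2}{3}$ multisets of three admissible triples. Setting $L_{T} := \prod_{t \in T} L_{t} \in \R[X]_{3}$, this is equivalent to the linear relation $\sum_{T} c_{T} L_{T}^{2} = 0$ in $\R[X]_{6}$, so the task is to show that the $\binom{m+2}{3}$ sextics $\{L_{T}^{2}\}_{T}$ are linearly independent. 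A short asymptotic count ($\binom{m+2}{3} \sim n^{6}/1296$ versus $\binom{n+5}{6} \sim n^{6}/720$) confirms that the naive dimension bound leaves this independence possible by a constant factor.

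The main step I would pursue is a dominant-monomial argument: try to assign to each multiset $T$ a distinguishing sextic monomial $X^{\mu(T)}$ such that the matrix $\bigl(\operatorname{coef}_{X^{\mu(T')}}(L_T^{2})\bigr)_{T,T'}$ is (block-)triangular with nonzero diagonal under a suitable ordering of multisets. The mod-$n$ constraint $i+j+k \equiv 0$ should play a nontrivial role here: every monomial appearing in $L_{T}^{2}$ has total index sum congruent to $0 \bmod n$, so all $L_{T}^{2}$ sit inside the same residue class of monomials, which raises the hope that a refined encoding $T \mapsto \mu(T)$ using the modular structure can separate them. If this direct approach fails, a fallback is to invoke \Cref{prop:algindep-sequence} with $\mathcal{D}$ the variety of rank-one quadratic forms: verifying algebraic independence in degree $3$ for any single admissible configuration (for instance by symbolic computation for small $n$) transfers the property to the generic rank-one instance, and an inductive scheme on $n$ (adjoining a new variable together with its new admissible triples at each step) could attempt to propagate it.

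The main obstacle is precisely the construction of the invariant $\mu(T)$. The graded-lex leading monomial of $L_T^{2}$ is $X_{i_1}^{2} X_{i_2}^{2} X_{i_3}^{2}$, where $i_r$ is the smallest index in the $r$-th triple of $T$, and this invariant forgets the partitioning of the nine indices of $T$ into three triples and therefore collides across many distinct $T$. Any workable invariant must remember the triple structure, and although the modular residue constraint narrows the monomial support, it does not obviously encode which factors of $L_{T}$ supplied which variables. Absent such an invariant, one is reduced to estimating the rank of an explicit $\binom{m+2}{3} \times \binom{n+5}{6}$ coefficient matrix whose entries have no clean closed form, and I am not aware of any combinatorial identity that would control this rank uniformly in $n$. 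This explains why the statement was recorded as a conjecture rather than a theorem.
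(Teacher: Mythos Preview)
The statement is a \emph{conjecture}, and the paper does not prove it either: the only argument offered there is numerical verification for $n\in\{7,\ldots,12\}$. Your proposal is therefore not in competition with a proof in the paper but rather an honest discussion of why no proof is available, and your closing sentence acknowledges exactly this. Your reformulation as linear independence of the sextics $L_T^2$ in $\R[X]_6$ is correct, and the asymptotic dimension count is accurate.

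Two remarks on the substance of your sketch. First, the ``fallback'' via \Cref{prop:algindep-sequence} is slightly misdirected: that proposition transfers the no-relations property \emph{from} a specific instance \emph{to} the generic element of $\mathcal{D}$, not the other way around. So verifying the conjecture for small $n$ by computer would establish the generic rank-$1$ statement for those $n$ (which is indeed how the paper uses the family), but it would not, even combined with an inductive scheme, say anything about the specific family $q_{ijk}$ for larger $n$. Second, the inductive idea is fragile because the index set of admissible triples is governed by the residue condition $i+j+k\equiv 0\pmod n$, which changes globally when $n$ is incremented; there is no natural containment of the family at stage $n$ inside the family at stage $n+1$.

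In short: there is no gap to point to because there is no claimed proof, and your diagnosis that the leading-monomial approach collapses (since $\operatorname{lm}(L_T^2)$ forgets the partition of indices into triples) is a fair explanation of the difficulty.
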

\begin{proof}[Evidence]
	Verified on a computer for $ n\in \{7,\ldots,12\} $. 
\end{proof}

\begin{remark}\label{rem:algindep-bezout}
	For $ m = n+1 $, $ k\ge 2 $, $ K\in \{\R, \C\} $ and general $ q_1,\ldots, q_m \in K[X]_{k} $, the ideal of relations $ I_{\mathrm{rel}}(q_1,\ldots, q_m) = (f) $ is principal and generated by some $ f \in K[Y_1,\ldots, Y_m] $ of degree $ k^n $. Indeed, 
	the polynomial map
	\begin{align}
		\underline{q}\colon K^n \to K^m, x\mapsto (q_1(x),\ldots,q_m(x)) 
	\end{align} 
	has $ n $-dimensional image. 
	The degree of $ f $ is the degree of the variety $ V(f) \subseteq \C^m $, which is determined by intersecting $ V(f) $ with a general subspace $ H $ of dimension $ 1 $. Denote by $ \mathcal{L} $ the $ n $-dimensional space of linear equations defining $ H $. Choose a basis $ \mathcal{L} = \langle \ell_{1},\ldots, \ell_{n} \rangle $ where $ \ell_{1},\ldots, \ell_{n}\in \C[Y]_{1} $. Pulling back via $ \underline{q} $ yields a system of $ n $ quadratic forms $ \ell_{i}(q_1,\ldots, q_m) \in \C[X]_{2} $, $ i\in \{1,\ldots, n\} $. By Bézout's theorem and genericity of $ H $ and $ q_1,\ldots, q_m $, we obtain that this system has $ k^n $ (complex) solutions, so $ \deg(f) = k^n $. For all $ n \ge 2, k\ge 2 $, this means that $ \beta_{k}(n)\ge n+1 $. 
\end{remark}

\subsubsection{Second condition: Unique Sum-of-Squares representations}\label{sec:second-condition}

Condition \eqref{eq:condition-sos} is more interesting. The space $ \sosupp f_2 $ is hard to analyze,  unless it equals $ \langle q_1,\ldots,q_m \rangle $. For generic $ q_1,\ldots,q_m $, their second order power sum $ f_2 = q_1^2 + \ldots + q_m^2 $ will have length $ m $ as long as $ m\leq \rk_{k}^{\circ}(n, 2k) $, cf. \Cref{def:waring-rank-and-length}. Thus, $ q_1,\ldots,q_m $ form a \textbf{minimum} length Sum-of-Squares representation of $ f_2 $. Therefore, the Gram matrix $ G(q) $ associated with this representation lies on the boundary of $ \gram(f) $, by \Cref{prop:gram-spec}. On the other hand, also by \Cref{prop:gram-spec}, the Gram matrices that have $ \sosupp f_2 $ as their image are precisely the relative interior points of $ \gram(f_2) $, which correspond to \textbf{maximum} length Sum-of-Squares representations of $ f_2 $ (with linearly independent addends). Condition \eqref{eq:condition-sos} is thus saying that the boundary point $ G(q) $ is also a relative interior point of $ \gram (f_2) $. This is only possible if $ \gram(f_2) $ is a singleton and therefore if $ f_2 $ is uniquely Sum-of-Squares representable. \Cref{prop:unique-sos-representations} collects this easy fact for future reference.

\begin{prop}\label{prop:unique-sos-representations} 
	Let $ k, m\in \N $ with $ m\leq \rk_{k}^{\circ}(n, 2k) $ and $ q_1,\ldots,q_m \in \R[X]_{k} $ be general $ k $-forms. Denote $ f_2 = \sum_{i = 1}^{m} q_i^2 $. Then $ f_2 $ is uniquely Sum-of-Squares representable, if and only if $ \sosupp f_2 = \langle q_1,\ldots,q_m \rangle $. 
\end{prop}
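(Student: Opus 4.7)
The forward implication is immediate from \Cref{prop:gram-spec}(f): if $f_2$ is uniquely Sum-of-Squares representable then $\gram(f_2)$ is a singleton, whose unique element $G(q)$ has image equal to the maximal facial subspace $\sosupp f_2$; this forces $\sosupp f_2 = \langle q_1,\ldots,q_m\rangle$.

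For the converse, my plan is to sandwich the rank of every Gram matrix in $\gram(f_2)$ between $m$ and $m$, and then use a parametrization to reduce $\gram(f_2)$ to a single point. First, the hypothesis $\sosupp f_2 = \langle q_1,\ldots,q_m\rangle$ gives $\dim U_{\gram(f_2)} = m$, so by \Cref{prop:gram-spec}(a) every $G \in \gram(f_2)$ has rank at most $m$ with image contained in $\langle q_1,\ldots,q_m\rangle$. On the other hand, the assumption $m\le \rk_{k}^{\circ}(n,2k)$ together with general position of the $q_i$ ensures via a dimension count on the $k$-Waring map that $f_2 = \sum q_i^2$ realizes the minimum Sum-of-Squares length $m$, so every $G$ has rank at least $m$. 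Consequently every $G \in \gram(f_2)$ has rank exactly $m$ and image $\langle q_1,\ldots,q_m\rangle$.

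Denoting the coefficient matrix of $q_1,\ldots,q_m$ by $C_q$, such a $G$ factors uniquely as $G = C_q M C_q^T$ for some positive definite symmetric $M \in \R^{m\times m}$. The Gram identity $[X]_{k}^T G [X]_{k} = f_2$ together with $f_2 = q^T I q$ then becomes $\sum_{i,j}(M_{ij}-\delta_{ij})\,q_i q_j = 0$, so $\gram(f_2) = \{G(q)\}$ precisely when the $\binom{m+1}{2}$ pairwise products $q_i q_j$, $i\le j$, are linearly independent in $\R[X]_{2k}$.

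The main obstacle is precisely this last assertion, i.e. upgrading linear independence of $q_1,\ldots,q_m$ to linear independence of all their pairwise products. I would address it by a tangent-space argument for the $k$-Waring map $\Phi\colon q \mapsto \sum q_i^2$: the $\binom{m}{2}$-dimensional orbit direction of the natural $O(m)$-action $q\mapsto Uq$ lies automatically in $\ker d\Phi_q$, and under the condition $m\le \rk_{k}^{\circ}(n,2k)$ the generic kernel at a general $q$ has dimension exactly $\binom{m}{2}$ (so that the orbit directions exhaust the kernel), which is equivalent to the linear independence of the products $q_i q_j$. Granted this, one forces $M = I$ and therefore $f_2$ is uniquely Sum-of-Squares representable.
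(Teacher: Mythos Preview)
Your forward direction and the first paragraph of the converse are correct and match the paper's reasoning: for general $q$ with $m\le \rk_k^\circ(n,2k)$ one has $\length f_2=m$, and together with $\sosupp f_2=\langle q_1,\ldots,q_m\rangle$ this pins the rank of every $G\in\gram(f_2)$ to exactly $m$, with image $\langle q_1,\ldots,q_m\rangle$.

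The detour after that is where the trouble lies. Once you know that \emph{every} $G\in\gram(f_2)$ has image equal to $U_{\gram(f_2)}=\sosupp f_2$, you are already done by the facial structure: by \Cref{prop:gram-spec}(a) and (d), any element in the relative interior of a proper face would have image of dimension strictly less than $m$, so $\gram(f_2)$ has no proper faces and is therefore a singleton. This is exactly the paper's argument (spelled out in the paragraph preceding the proposition): $G(q)$ is simultaneously a minimum-rank point and a relative interior point, which forces $\gram(f_2)=\{G(q)\}$.

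Your alternative route --- parametrizing $G=C_qMC_q^{T}$ and reducing to linear independence of the products $q_iq_j$ --- is a correct reformulation, but the proposed tangent-space justification has a genuine gap. The assertion that $\dim\ker d\Phi_q=\binom{m}{2}$ for general $q$ does \emph{not} follow from $m\le\rk_k^\circ(n,2k)$: the generic rank only tells you that $\overline{\im\Phi_{m-1}}\subsetneq\overline{\im\Phi_m}$ for each $m\le\rk_k^\circ$, not that the fibre has the minimal dimension $\binom{m}{2}$. The latter is a non-defectivity statement for the map $q\mapsto\sum q_i^2$ into $\R[X]_{2k}$ and would require its own proof. (Also, the claimed equivalence is only a one-way implication: $\dim\ker d\Phi_q=\binom{m}{2}$ does imply linear independence of the $q_iq_j$, but not conversely, since the full kernel may contain syzygies $\sum p_iq_i=0$ with $p_i\notin\langle q\rangle$.) Ironically, the linear independence of the $q_iq_j$ you want is a \emph{consequence} of the proposition, via the paper's shorter argument.
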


Now, under which geometrical conditions on the variety of $ q_1,\ldots,q_m $ does $ \sum_{i = 1}^{m} q_i^2 $ have a unique Sum-of-Squares representation? We write $ V_{\R} := V\cap \R^n $ for the set of real points of some affine variety $ V\subseteq \C^n $. 

\begin{reminder}\label{ex:dense-real-points}
	A subvariety $ V \subseteq \C^n$ has dense real points $ V_{\R}\subseteq \R^n $ if and only if every irreducible component of $ V $ contains a  real point that is smooth in $ V $. In particular, an irreducible nonsingular subvariety $ V \subseteq \C^n$ has dense real points $ V_{\R} $ if and only if it contains a real point. 
\end{reminder}

\begin{prop}[{Corollary of Bertini's theorem, cf. \cite[II 8.4(d)]{hartshorne2013algebraic}}]\label{cor:bertini} 
	Fix $ k, m\in \N $. If $ q_1,\ldots,q_m\in \C[X_1,\ldots,X_n]_{k} $ are general and $ m\leq n-1 $, then $ I:=(q_1,\ldots,q_m) $ is a radical ideal, and its variety $ V(I) $ in $\P(\C^n)$ is of pure codimension $ m $. If, in addition, $ m\leq n-2 $, then $ I $ is a prime ideal and $ V(I) $ is smooth. 
\end{prop}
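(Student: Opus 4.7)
The plan is to verify the geometric assertions on $V(I) \subseteq \P(\C^n) = \P^{n-1}$ by inductive application of Bertini's theorem, and then translate smoothness and irreducibility into the ring-theoretic statements on $I$.

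First, I would recall the form of Bertini I need (cf.\ \cite[III.7.9.1, III.10.9]{hartshorne2013algebraic}): if $W \subseteq \P^{n-1}$ is a smooth irreducible projective variety of dimension at least $1$ and $L$ is a basepoint-free linear system on $W$, then a general member of $L$ cuts out a smooth subscheme of $W$ of pure codimension $1$, which is moreover irreducible whenever $\dim W \ge 2$. The linear system $|\mathcal{O}_{\P^{n-1}}(k)|$ contains $X_1^k,\ldots,X_n^k$, which have no common zero on $\P^{n-1}$, so its restriction to any subvariety $W$ is still basepoint-free.

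Next, I would induct on $m$ from $0$ up to $n-1$. The base case $W_0 := \P^{n-1}$ is smooth and irreducible of dimension $n-1$. Assuming $W_{m-1} := V(q_1,\ldots,q_{m-1})$ is smooth and irreducible of pure codimension $m-1$ (valid for $m-1 \le n-2$), I apply Bertini to the basepoint-free system of degree-$k$ forms on $W_{m-1}$: the general $q_m$ cuts out a smooth subscheme $W_m$ of pure codimension one in $W_{m-1}$, hence of pure codimension $m$ in $\P^{n-1}$, and this $W_m$ remains irreducible as long as $\dim W_{m-1} = n-m \ge 2$, i.e.\ $m \le n-2$. At the boundary $m = n-1$, the inductive step still produces a smooth $W_{n-1}$ of pure codimension $n-1$, namely a finite set of reduced points, but irreducibility is no longer guaranteed. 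Since both ``smooth of the expected codimension'' and ``irreducible'' cut out Zariski-open loci in the parameter space $\C[X]_k^m$, and this inductive construction exhibits these loci as nonempty, the assertions hold for general tuples $(q_1,\ldots,q_m)$.

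Finally, I would translate the geometric output into ring-theoretic statements. Smoothness of the scheme $\mathrm{Proj}(\C[X]/I)$ forces $\C[X]/I$ to be locally regular, in particular reduced, whence $I = \sqrt{I}$ is radical. If moreover $V(I)$ is irreducible, then the reduced coordinate ring of an irreducible projective scheme is a domain, so $I$ is prime. The main work is the verification of Bertini's hypotheses at each step, which reduces entirely to the basepoint-freeness of $|\mathcal{O}_{\P^{n-1}}(k)|$; the only minor subtlety is tracking when irreducibility is lost, which is precisely what distinguishes the ``radical'' regime $m \le n-1$ from the ``prime'' regime $m \le n-2$.
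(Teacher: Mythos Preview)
The paper does not supply its own proof of this proposition; it simply records it as a corollary of Bertini's theorem with a pointer to Hartshorne. Your inductive application of Bertini to the basepoint-free system $|\mathcal{O}_{\P^{n-1}}(k)|$ is exactly the standard derivation, and your tracking of when irreducibility survives (namely $\dim W_{m-1}\ge 2$, i.e.\ $m\le n-2$) is correct.

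There is one genuine gap in the ring-theoretic translation. You write that smoothness of $\mathrm{Proj}(\C[X]/I)$ forces $\C[X]/I$ to be ``locally regular, in particular reduced''. That is not true as stated: smoothness of $\mathrm{Proj}$ only controls localizations at homogeneous primes other than the irrelevant maximal ideal, and the affine cone over a smooth projective variety is typically singular at the origin. What saves you is something you have already established but do not invoke here: at each inductive step the general $q_m$ cuts codimension one, so $q_1,\ldots,q_m$ is a regular sequence and $\C[X]/I$ is a complete intersection, hence Cohen--Macaulay. Cohen--Macaulayness gives Serre's condition $S_1$ (no embedded primes; in particular the irrelevant ideal is not associated since $m<n$), and smoothness of $\mathrm{Proj}$ gives $R_0$. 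Together these yield reducedness of $\C[X]/I$, i.e.\ $I$ radical; adding irreducibility of $V(I)$ then gives primality. Once you insert this Cohen--Macaulay step, the argument is complete.
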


\begin{lemma}\label{lem:sosupport-representation} Let $ m, k\in \N $. The following hold:
	\begin{enumerate}[(a)]
		\item Let $ q_1,\ldots,q_m\in \R[X]_{k}^m $. Assume that $ I:= (q_1,\ldots,q_m) $ is a radical ideal and $ V_{\R}(I) $ is dense in $ V(I) $. Then, for $ f_2 := \sum_{i = 1}^{m} q_i^2 $, it holds that $ \sosupp f_2 = \langle q_1,\ldots,q_m \rangle $ and $ f_2 $ has dual nondegenerate Sum-of-Squares support. 
		\item Let $ q_1,\ldots,q_m \in \R[X]_{k}^m$ be general forms satisfying the assumption of (a) and let $ m\leq \rk_{k}^{\circ}(n, 2k)$. Then $ f_2 $ is uniquely Sum-of-Squares representable.
	\end{enumerate}
\end{lemma}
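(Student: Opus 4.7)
The plan is to prove part (a) by a direct vanishing argument on the real variety $V_\R(I)$, and then deduce part (b) as a formal consequence of part (a) together with Proposition \ref{prop:unique-sos-representations}.

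For part (a), one inclusion is free: for each $i$ we have $f_2 - q_i^2 = \sum_{j\ne i} q_j^2 \in \Sigma_{2k}$, witnessing $q_i \in \sosupp f_2$, and since $\sosupp f_2$ is a subspace by Proposition \ref{prop:gram-spec}(e), this gives $\langle q_1,\ldots,q_m\rangle \subseteq \sosupp f_2$. For the reverse inclusion, fix $p \in \sosupp f_2$ and pick $\lambda > 0$ and $r_1,\ldots,r_s \in \R[X]_k$ with $f_2 - \lambda p^2 = \sum_{j} r_j^2$. For any $x\in V_\R(I)$ all $q_i(x)=0$, hence $f_2(x) = 0$, and the identity rearranges to
$$ -\lambda\, p(x)^2 \;=\; \sum_{j} r_j(x)^2 \;\ge\; 0, $$
forcing $p(x) = 0$. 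Thus $p$ vanishes on $V_\R(I)$, and by the density hypothesis it vanishes on all of $V(I)$. Applying Hilbert's Nullstellensatz and using that $I$ is radical gives $p \in I$. Since $I$ is generated in degree $k$ by forms of exactly that degree, $I_k = \langle q_1,\ldots,q_m\rangle$, and because $\deg p = k$ we conclude $p \in \langle q_1,\ldots,q_m\rangle$. The claim about $f_2$ having dual nondegenerate Sum-of-Squares support will then be an immediate translation of the just-established equality $\sosupp f_2 = \langle q_1,\ldots,q_m\rangle$ against the relevant definition.

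For part (b), the hypotheses of (a) are satisfied by assumption, so $\sosupp f_2 = \langle q_1,\ldots,q_m\rangle$. Since the $q_i$ are general and $m \le \rk_k^\circ(n, 2k)$, Proposition \ref{prop:unique-sos-representations} applies and states exactly that this equality is equivalent to $f_2$ being uniquely Sum-of-Squares representable, so we are done.

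The only nontrivial step is the chain going from vanishing on $V_\R(I)$ to membership in $I_k$, which relies on the density assumption (to pass from real to complex vanishing) and on radicality (to remove a power from the Nullstellensatz conclusion); both are in the hypotheses, so there is no real obstacle. The main thing to be careful about is that $\deg p = k$ matches the generator degree, so the ``ideal membership implies linear-span membership'' step is automatic and one does not need to invoke any cancellation or syzygy argument.
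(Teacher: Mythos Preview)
Your argument for $\sosupp f_2 = \langle q_1,\ldots,q_m\rangle$ via evaluation on $V_\R(I)$, density, and the Nullstellensatz is correct and is exactly the direct argument the paper sketches; likewise your derivation of (b) from (a) and \Cref{prop:unique-sos-representations} matches the paper.

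The gap is in the sentence ``dual nondegenerate Sum-of-Squares support will then be an immediate translation of the just-established equality $\sosupp f_2 = \langle q_1,\ldots,q_m\rangle$ against the relevant definition.'' It is not. Dual nondegeneracy, as defined in \Cref{app:gram-structure}, is the strict complementarity statement $\ker M_E = \sosupp f_2$ for $E\in\relint C_{f_2}$; one always has $\sosupp f_2 \subseteq \ker M_E$, but the reverse inclusion is \emph{not} a formal consequence of knowing what $\sosupp f_2$ is. The paper obtains it via \Cref{cor:real-radical=>dual-nondegenerate}, which rests on \Cref{prop:dual-nondegeneracy}: one first shows that the hypotheses make $I$ real radical (your vanishing argument does this, since any sum of squares in $I_{2k}$ forces each summand into $I_k$), then uses this to conclude that the quotient cone $\Sigma_{R,2k}$ is pointed, hence its dual is full-dimensional in $R_{2k}^\dual$, hence $C_{f_2}$ spans $I_{2k}^\perp$, which is equivalent to dual nondegeneracy. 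That chain of implications is the missing content; you should either invoke \Cref{cor:real-radical=>dual-nondegenerate} or reproduce the pointedness/full-dimensionality argument.
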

\begin{proof}
	Claim (a) follows immediately from \Cref{cor:real-radical=>dual-nondegenerate}, proven in \Cref{app:pointed-sos-cones}. Everything except dual nondegeneracy can also be seen directly: 
	Let $ N\in \N_0 $ and $ p_1,\ldots,p_N \in \R[X]_{k} $ such that $ \sum_{i = 1}^{N} p_i^2 = \sum_{i = 1}^{m} q_i^2 $. Evaluating this identity in some $ x\in V_{\R}(I) $ yields that $ p_1,\ldots,p_m $ vanish on $ V_{\R}(I) $. Since $ V(I) $ has dense real points and $ I $ is radical, $ p_1,\ldots,p_m $ must lie in $ I_{k}  = \langle q_1,\ldots,q_m \rangle$. Thus $ \langle q_1,\ldots,q_m \rangle $ equals $\sosupp f_2$. Claim (b) follows from (a) using \Cref{prop:unique-sos-representations}. 
\end{proof}

\begin{corollary}[{Restatement of Corollaries \ref{cor:pof-recovery-intro-real-variety} and \ref{cor:pof-recovery-intro-real-variety-m=n-1}}]\label{cor:pof-recovery-interpretation-variety} 
	Let $ q_1,\ldots,q_m\in \R[X]_{k}^m $ general satisfying one of these properties:
	\begin{enumerate}
		\item $ m\leq n-2 $ and $ V_{\R}(q_1,\ldots,q_m) $ contains a nonzero point, or
		\item $ m = n-1 $ and all lines in the affine cone $ V(q_1,\ldots,q_m) $ are real. 
	\end{enumerate} 
	Then, $ \sum_{i = 1}^{m} q_i^2 $ is uniquely Sum-of-Squares representable, with dual nondegenerate Sum-of-Squares support. In particular, \Cref{algo:pof-recovery-general} recovers $ \{q_1,\ldots,q_m\} $ from input $ \sum_{i = 1}^{m} q_i^2 $ and $ \sum_{i = 1}^{m} q_i^3 $. 
	For fixed $ m \leq n-1 $, condition (a) or (b), respectively, are satisfied on a Euclidean open subset of $ \R[X]_{k}^m $. 
\end{corollary}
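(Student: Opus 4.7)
The plan is to reduce the uniqueness and dual-nondegeneracy claims to Lemma~\ref{lem:sosupport-representation}, then invoke Theorem~\ref{thm:main-result} for the algorithmic recovery, and finally establish Euclidean openness by exhibiting one transversal tuple per case and appealing to the implicit function theorem. To start, I would verify that in both settings $I := (q_1,\ldots,q_m)$ is radical and $V_\R(I)$ is dense in $V(I)$. For generic $q_1,\ldots,q_m$, Proposition~\ref{cor:bertini} (Bertini) yields radicality, together with irreducibility and smoothness of $V(I)\subseteq\P^{n-1}$ in case~(a), and pure zero-dimensionality in case~(b). In case~(a), the promised nonzero real point provides a real smooth point of the affine cone, so by Reminder~\ref{ex:dense-real-points} real points are dense in $V(I)$. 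In case~(b), $V(I)$ is a finite union of complex lines through the origin, and the hypothesis that all of them are real yields $V_\R(I) = V(I)$ trivially. Lemma~\ref{lem:sosupport-representation}(a) then gives $\sosupp f_2 = \langle q_1,\ldots,q_m\rangle$ with dual nondegeneracy, and since $m \le n-1$ stays well below the generic $k$-Waring rank $\rk_k^\circ(n,2k)$, part~(b) of the same lemma delivers unique Sum-of-Squares representability of $f_2$.

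For the algorithmic recovery by Algorithm~\ref{algo:pof-recovery-general}, I would invoke Theorem~\ref{thm:main-result}: the POF decomposition exists by assumption with positive weights and linearly independent addends, and the remaining dimension condition $\dim \R[U]_{3k} = \binom{m+2}{3}$ is equivalent to the absence of degree-$3$ algebraic relations among $q_1,\ldots,q_m$. Since $m \le n-1 \le n$, generic $q_1,\ldots,q_m$ are algebraically independent, which is easily seen by specializing to $q_i = X_i^k$ and noting that the associated polynomial map $\C^n\to\C^m$ is then dominant, so Proposition~\ref{prop:algindep-sequence} (or a direct fiber-dimension argument) propagates this to generic tuples.

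Finally, to establish Euclidean openness, I would exhibit one explicit transversal tuple per case and invoke the implicit function theorem. For~(a), I would take $q_i^\ast := X_i X_n^{k-1}$ for $i = 1,\ldots,m$: the point $e_n$ lies in $V_\R(q^\ast)$ and the differentials $dq_i^\ast|_{e_n} = dX_i$ are linearly independent, so any sufficiently small perturbation still admits a real zero near $e_n$. For~(b), I would take $q_i^\ast := \prod_{j=0}^{k-1}(X_i - jX_n)$ for $i=1,\ldots,n-1$: then $V(q_1^\ast,\ldots,q_{n-1}^\ast)$ is exactly the union of the $k^{n-1}$ real lines through the points $(a_1,\ldots,a_{n-1},1)$ with $a_i\in\{0,\ldots,k-1\}$, and a direct computation shows each projective intersection is transverse (the surviving differential at such a point is $C_i(dX_i - a_i\, dX_n)$ for some $C_i \ne 0$), so each of the $k^{n-1}$ real lines deforms to a nearby real line under small perturbation. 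The main obstacle I anticipate is cleanly interleaving the Zariski-generic hypotheses (Bertini, algebraic independence) with these Euclidean-open real conditions; the saving observation is that any nonempty Zariski-open subset of $\R[X]_k^m$ is Euclidean dense and therefore meets the Euclidean open neighborhoods of $q^\ast$ constructed above, which is precisely what allows both streams of genericity to coexist on a nonempty Euclidean open subset, as required by the statement.
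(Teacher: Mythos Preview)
Your argument is correct and follows the paper's overall strategy: reduce to Lemma~\ref{lem:sosupport-representation} via Bertini (Proposition~\ref{cor:bertini}) and Reminder~\ref{ex:dense-real-points}, then appeal to Theorem~\ref{thm:main-result} for the recovery, and finally exhibit an explicit tuple to certify Euclidean openness. The structure and the specific choice for case~(b) match the paper almost exactly.

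The notable difference is your treatment of Euclidean openness in case~(a). The paper invokes the Poincar\'e--Miranda theorem (deferred to Appendix~\ref{app:typical-regions-gram-spec}) with the tuple $q_i = (X_i + Y)^2 - 2Y^2$ on a cube; you instead pick $q_i^\ast = X_i X_n^{k-1}$ and apply the implicit function theorem at the transverse real zero~$e_n$. Your route is more elementary and avoids the Poincar\'e--Miranda machinery entirely, at the cost of needing a smooth zero rather than merely a sign-change; since your explicit tuple provides one, this is a genuine simplification. For case~(b), the paper argues stability via the B\'ezout bound (no real line can branch into a conjugate pair because the count is already maximal), whereas you argue transversality plus the implicit function theorem; these are essentially equivalent, though you should make explicit that the $k^{n-1}$ persisting real lines exhaust \emph{all} lines of the perturbed system, which follows because the original tuple already attains the B\'ezout bound. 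Your closing remark on interleaving Zariski-generic and Euclidean-open conditions is a point the paper leaves implicit and is worth keeping.
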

\begin{proof}
	\textbf{Case (1):} Since $ m\leq n-2 $, Bertini's Theorem \ref{cor:bertini} guarantees that $ I = (q_1,\ldots,q_m) $ is a prime ideal and the affine cone $ V(I) $ is smooth and irreducible. Thus, by \ref{ex:dense-real-points}, the condition of \Cref{lem:sosupport-representation} is satisfied. From the Poincaré-Miranda theorem \ref{thm:poincare-miranda}, it easy to see that $ V(q_1,\ldots,q_m) $ contains a real nonzero point for a Euclidean open subset of all $ k $-forms $ q_1,\ldots,q_m $. This is elaborated in \Cref{app:typical-regions-gram-spec}.

	\textbf{Case (2):} Since $ m = n-1 $, Bertini's theorem \ref{cor:bertini} yields that  $ I = (q_1,\ldots,q_m) $ is a radical ideal and $ V(I) $ is a union of finitely many lines. Thus the condition of \Cref{lem:sosupport-representation} is met if and only if all those lines are real. If the affine cone $ V(I) $ has the maximum number of $ k^m $ real lines given by the Bézout bound for some specific choice of $ q_1,\ldots,q_m $, then so it does in a neighbourhood: Indeed, if there was a curve $ q(t) $ through $ q(0) = (q_1,\ldots,q_m) $ such that $ V(q(t)) $ had nonreal lines for each $ t\ne 0 $, then a single real line of $ V(q(0)) $ would have to branch into two complex conjugate lines. This is not possible, since the specific system at $ t = 0 $ already attains the Bézout bound. For the specific choice, one may choose $ q_i\in \R[X_i, X_n]_{k} $ as $ X_n $-homogenizations of univariate polynomials in $ X_i $ with $ k $ distinct real zeros.  
\end{proof}

\subsection{Numerical experiments with trace-free quadratics}\label{sec:trace-free-quadratics-study}

\begin{figure}[h]
	\centering
	\includesvg[width = 0.5\columnwidth]{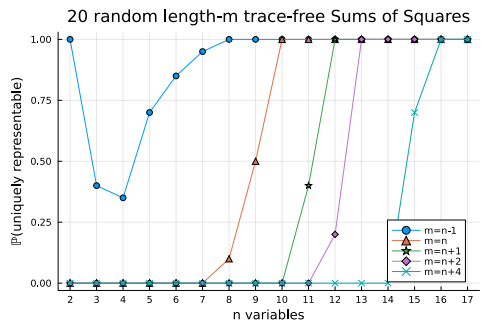}
	\caption{The $y$-axis shows the probability that $\sum_{i = 1}^{m} q_i^2 $ is uniquely representable, if $ q_1,\ldots,q_m $ are iid Gaussian random trace-free quadratics. All probabilities were empirically estimated by sampling the quadratic forms and then solving SDPs. Each data point corresponds to an average over 20 instances. The curves show the behaviour for different relations between $m$ and $n$. } 
	\label{fig:pointedness-study}. 
\end{figure}

This section conducts a numerical study on random quadratics. Instances $ q = (q_1,\ldots,q_m) $ of random quadratic forms are sampled such that $ q_1,\ldots,q_m $ are iid. Subsequently, we use semidefinite programming to determine the dimension of $ \sosupp(\sum_{i = 1}^{m} q_i^2) $ and the kernel of some relative interior point of the dual SDP from \Cref{app:pointed-sos-cones}. The quadratics are chosen from a trace-free distribution (see \Cref{def:gauss-trace-free}), to avoid choosing positive definite forms. I wish to thank Greg Blekhermann and João Gouveia for this suggestion, as it appears that for these distributions, the probability to get uniquely representable sums of squares behaves surprisingly regular. Precisely, we consider the following distributions:
\begin{defi}\label{def:gauss-trace-free}
	Let $ n\in \N $ and $ X = (X_1,\ldots,X_n) $. 
	\begin{enumerate}
		\item We call a random quadratic $ q $ in $ X $ \emph{Gaussian trace-free}, if it is sampled as follows: 
		Choose the entries of a matrix $ A\in \R^{n\times n} $ independently at random from the standard normal distribution $ \mathcal{N}(0, 1) $. Set $ q:= X^{T}(A-\tr(A)I_n)X $. 
		\item We call a random quadratic $ q $ in $ X $ \emph{Gauss-Gramian trace-free}, if it is sampled as follows: Choose the entries of a matrix $ A\in \R^{n\times n} $ independently at random from the standard normal distribution $ \mathcal{N}(0, 1) $. Then, set $ q:= X^{T}(A^{T}A-\tr(A^{T}A)I_n)X $. 
	\end{enumerate}
\end{defi}

In addition, some explicit family of $ m=\Theta(n^2) $ quadratics is constructed. We verify for the first values $ n\in \N $ that the sum of its squares is uniquely representable. We conjecture that this holds true for all $ n \in \N $, see \Cref{conj:explicit-quadratic-tracefree-family}. The code and results of all  experiments can be found on {GitHub}, see \cite{Taveira_Blomenhofer_Pofdecomp_Github_2023}. Computations were done in \texttt{Julia} \cite{Julia-2017}, with the packages \texttt{JuMP} \cite{JuMP_DunningHuchetteLubin_2017}, \texttt{MultivariatePolynomials} \cite{legat2021multivariatepolynomials}, \texttt{SumOfSquares} \cite{jl-sumofsquares-1}, \cite{jl-sumofsquares-2} and the \texttt{Mosek} solver \cite{mosek}. For the Gaussian trace-free distribution, this is the observed behaviour of the probability $ p_{m, n} $ that $ \sum_{i = 1}^{m} q_i^2 $ is uniquely representable and its supporting face in $ \Sigma_{2k} $ is exposed: 
\begin{enumerate}
	\item If $ m = n+r $ for some constant $ r $, then $ p_{m, n} $ appears to be an S-shaped curve in $ n $ that converges to $ 1 $ as $ n\to \infty $. This behaviour is depicted in \Cref{fig:pointedness-study}.  Cf.  \cite[\texttt{data/experiment-3}]{Taveira_Blomenhofer_Pofdecomp_Github_2023}. 
	\item $ p_{m, n} > 0 $ if $ m \leq n-1 $. See the blue curve in \Cref{fig:pointedness-study}. This is consistent with the results from \Cref{sec:real-geometry}. Cf.  \cite[\texttt{data/experiment-3}]{Taveira_Blomenhofer_Pofdecomp_Github_2023}. 
	\item For $ m(n) = \lceil \frac{(n+2)(n+1)}{6} \rceil = \Theta(n^2) $, it appears to hold $ p_{m(n), 2n} \approx 1 $, but $ p_{m(n), n} \approx 0 $. Cf. \cite[\texttt{data/experiment-2}]{Taveira_Blomenhofer_Pofdecomp_Github_2023}.
\end{enumerate}

The same qualitative behaviour holds true if the Gaussian trace-free distribution is replaced by the Gauss-Gramian trace-free distribution from \Cref{def:gauss-trace-free}(b). Cf. \cite[\texttt{data/experiment-2-gramian} and [\texttt{data/experiment-3-gramian}]{Taveira_Blomenhofer_Pofdecomp_Github_2023}. All of the above statements are empirical observations, based on limited computational experiments in a bounded number of variables. A modest version of the first observation is formulated in the following Conjecture.

\begin{conjecture}\label{conj:pof-recovery-intro-gram}
	If $ q_1,\ldots,q_{n} $ are chosen as iid Gaussian random trace-free quadratics, then $ \sum_{i = 1}^{n} q_i^2 $ is uniquely representable with probability $ p_{n} \to 1 $ as $ n\to \infty $. 
\end{conjecture}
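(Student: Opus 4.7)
The plan is to prove the conjecture by analyzing the Gram spectrahedron $\gram(f_2)$ probabilistically and showing it is a singleton with probability tending to $1$. By \Cref{prop:unique-sos-representations}, it suffices to establish that for generic $q = (q_1,\ldots,q_n)$ of the prescribed distribution, one has $\sosupp f_2 = \langle q_1,\ldots,q_n \rangle$ with probability $p_n \to 1$. Equivalently, using \Cref{prop:gram-spec}, the supporting face of $G(q) = \sum_i c_{q_i} c_{q_i}^T$ in $\gram(f_2)$ equals $\{G(q)\}$, i.e., $G(q)$ is an extreme point lying on a minimal face.

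The first step is to set up a dual certificate via the SDP formulation referenced in Appendix A. Unique SOS representability of $f_2$ is equivalent to the existence of a symmetric linear functional $L\in \R[X]_{4}^{\dual}$ such that the associated catalecticant (moment) matrix $M_L\in \Sym^{\binom{n+1}{2}\times \binom{n+1}{2}}$ is positive semidefinite with kernel exactly $\langle c_{q_1},\ldots,c_{q_n}\rangle$, and such that $L$ is compatible with the constraint $[X]_2^{T} M_L[X]_2 \propto $ (something vanishing suitably on $f_2$). In other words, the task is reduced to producing a PSD dual slack whose nullspace coincides with the image of $G(q)$. I would try to construct $L$ directly as a randomized projection, e.g., starting from $\Pi^{\perp} := I - P_{\langle c_{q_i}\rangle}$ and modifying it so that it lies in the correct affine subspace of catalecticant matrices.

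The second step is to use the structure of the Gaussian trace-free distribution (\Cref{def:gauss-trace-free}) to control the relevant random objects. The coefficient vectors $c_{q_1},\ldots,c_{q_n}$ live in the space $\Sym^2(\R^n)$ of symmetric $n\times n$ matrices; the trace-free condition makes them iid centred random vectors with an \emph{explicit covariance tensor} that is invariant under the orthogonal group action on $\R^n$. By rotation invariance together with the $O(n)$-decomposition of $\Sym^2(\R^n)$ into the trace and trace-free representations, one can compute $\mathbb{E}[G(q)]$ and $\mathbb{E}[c_q^{\tens 4}]$ in closed form. Standard concentration for Wishart-type matrices should then yield, with probability $1 - o(1)$, that $G(q)$ has rank exactly $n$, its spectrum concentrates, and $\langle c_{q_1},\ldots,c_{q_n}\rangle$ sits in ``generic position'' relative to the Veronese cone $\{[X]_2 [X]_2^T\}$.

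The third step is to combine these ingredients to certify uniqueness. One tries to argue that the random projection $\Pi^{\perp}$, restricted to the catalecticant subspace, remains PSD and has the correct kernel with high probability; this is a finite-codimension linear condition, and random-matrix concentration for $n \times n^2/2$ random rectangular matrices should make it hold outside events of probability $o(1)$. \textbf{The main obstacle} is precisely this last reconciliation: the PSD cone inside $\Sym^{\binom{n+1}{2}}$ and the affine subspace of catalecticant matrices are very different objects, and a perturbation that preserves the kernel condition can easily leave the catalecticant slice. Making this quantitative requires either a sharp eigenvalue separation estimate for the restricted random operator, or a more combinatorial argument exploiting the particular structure of the trace-free ensemble (e.g.\ via Schur--Weyl symmetry). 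I would expect the $m=n$ threshold to arise naturally from a dimensional count: the catalecticant subspace has codimension $\binom{n+3}{4} - \binom{n+1}{2}^2/\text{(sym.)}$, and the projection $\Pi^{\perp}$ has rank $\binom{n+1}{2} - n$; a noncollision between these two numbers is what ultimately drives $p_n \to 1$.
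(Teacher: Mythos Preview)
The paper does \emph{not} prove this statement: it is explicitly stated as a conjecture, supported only by the numerical experiments of \Cref{sec:trace-free-quadratics-study} and \Cref{fig:pointedness-study}. There is therefore no ``paper's own proof'' to compare against; the author only presents empirical evidence that $p_n$ looks like an S-shaped curve approaching $1$.

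Your proposal is not a proof either, and you essentially say so yourself. The overall strategy --- certify unique Sum-of-Squares representability by exhibiting a dual functional $L$ whose moment matrix $M_L$ is psd with kernel exactly $\langle c_{q_1},\ldots,c_{q_n}\rangle$ --- is the correct framework (this is precisely the strict complementarity/dual nondegeneracy discussed in \Cref{app:gram-structure}). But the entire difficulty of the conjecture is concentrated in your Step~3, which you leave as a heuristic. Two concrete issues:
\begin{enumerate}
\item The orthogonal projector $\Pi^{\perp}$ onto $\langle c_{q_i}\rangle^{\perp}$ is almost never a moment matrix: the moment-matrix constraint is a linear subspace of codimension $\binom{\binom{n+1}{2}+1}{2} - \binom{n+3}{4}$ inside the symmetric matrices, and ``modifying $\Pi^{\perp}$ so that it lies in the correct affine subspace'' while keeping it psd \emph{and} keeping the kernel exactly $n$-dimensional is the whole problem. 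No amount of generic concentration for $G(q)$ or $c_q^{\otimes 4}$ addresses this, because the obstruction is not about $G(q)$ concentrating but about whether the moment-matrix slice through the psd cone touches the face $\{M\succeq 0:\im M\subseteq\langle c_{q_i}\rangle^{\perp}\}$ in its relative interior.
\item Your final dimension count is garbled and, more importantly, a dimension count alone cannot conclude: the geometric arguments of \Cref{sec:real-geometry} already break down at $m=n$ precisely because $V(q_1,\ldots,q_n)$ is generically zero-dimensional (so \Cref{lem:sosupport-representation} and \Cref{cor:real-radical=>dual-nondegenerate} no longer apply), and nothing in your sketch replaces that real-algebraic input.
\end{enumerate}
In short, the proposal identifies the right dual object but does not construct it, and the ``main obstacle'' you flag is the conjecture itself.
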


The third observation aligns with \Cref{conj:explicit-quadratic-tracefree-family}, for which we gathered separate numerical evidence. In particular, both suggest that there exist open neighbourhoods of parameters $ q = (q_1,\ldots,q_m) $, where $ m $ is much larger than $ n $ and their sum of squares is uniquely representable. This leads to a natural question: What is the maximum typical length of a uniquely representable sum of squares? Formally: 

\begin{question}\label{quest:singleton-gramspec-maxrank}
	For $n\in \N$, what is the maximum number $m(n)$ of linearly independent quadratics $q_1,\ldots,q_{m(n)}$ in $n$ variables such that for all $ p_1,\ldots,p_{m(n)} $ in some (Euclidean) neighbourhood of $q_1,\ldots,q_{m(n)}$, $ \sum_{i = 1}^{m(n)} p_i^2$ is uniquely representable?
\end{question}
\begin{conjecture}\label{conj:explicit-quadratic-tracefree-family}
	The explicit family of $ m = \lceil \frac{(n+2)(n+1)}{6} \rceil$ trace-free quadratics 
	\begin{align}
		q_{ijk} := (X_i + X_j + X_k)(Y_i + Y_j + Y_k), \qquad (i\leq j\leq k, i+j+k \equiv_{n} 0)
	\end{align}
	in $ 2n $ variables $ (X, Y) = (X_1,\ldots,X_n,Y_1\ldots,Y_n) $ does not satisfy any algebraic relations of degree $ 3 $ and 
	\begin{align}
		\sum_{\substack{i\leq j\leq k\\ i+j+k \equiv_{n} 0}} q_{ijk}^2 
	\end{align} 
	is uniquely Sum-of-Squares representable, with nondegenerate dual. The same claim also holds true if $ q_{ijk} $ are replaced by $ q_{ijk} + \varepsilon p_{ijk} $, where $ \varepsilon\in \R $ is sufficiently small and $ p_{ijk} $ is some polynomial with support contained in $ \{ X_iY_j \mid i, j = 1,\ldots,n \} $.
\end{conjecture}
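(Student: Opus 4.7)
The plan is to establish the conjecture first at $\varepsilon = 0$ and then extend to small perturbations using openness. At $\varepsilon = 0$, two separate statements must be checked: (i) there are no degree-three algebraic relations among the $q_{ijk}$, and (ii) $f_2 := \sum q_{ijk}^2$ is uniquely Sum-of-Squares representable with nondegenerate dual.

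For (ii), I would apply \Cref{lem:sosupport-representation}: it suffices to show the ideal $I = (q_{ijk})$ is radical and that its complex variety $V(I)$ has dense real points. The density is cheap: writing $L_{ijk}(Z) := Z_i + Z_j + Z_k$, one has $q_{ijk} = L_{ijk}(X)L_{ijk}(Y)$, and $V(I)$ decomposes, over all bipartitions $S \cup T$ of the triple-index set, as the union of the linear subspaces
\begin{align*}
W_{S,T} := \{L_{ijk}(X)=0 : (i,j,k)\in S\}\cap\{L_{ijk}(Y)=0 : (i,j,k)\in T\},
\end{align*}
each of which is defined over $\R$ and thus has dense real points by \Cref{ex:dense-real-points}. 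Once $\sosupp f_2 = \langle q_{ijk}\rangle$ is established, unique representability follows from \Cref{prop:unique-sos-representations}, since $m = \Theta(n^2)$ sits well below the generic length $\Theta(n^3)$ of quartics in $2n$ variables, and the dual-nondegenerate conclusion is packaged into the same \Cref{lem:sosupport-representation}(a). The main obstacle here is radicality of $I$, which I would attempt by showing $I$ equals the intersection of the (radical) vanishing ideals of the $W_{S,T}$, exploiting the $X \leftrightarrow Y$ symmetry and the $\Z/n\Z$-action on the triple set.

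For (i), I would leverage the bilinear factorization: a cubic relation among the $q_{ijk}$ takes the form
\begin{align*}
\sum_{\mathbf t,\mathbf s,\mathbf u} c_{\mathbf t,\mathbf s,\mathbf u}\, L_{\mathbf t}(X)L_{\mathbf s}(X)L_{\mathbf u}(X)\cdot L_{\mathbf t}(Y)L_{\mathbf s}(Y)L_{\mathbf u}(Y) = 0
\end{align*}
in bidegree $(3,3)$ in $(X,Y)$. Since each $q_{ijk}$ lies in the variety $\mathcal D$ of rank-one bilinear forms of bidegree $(1,1)$, one could invoke \Cref{prop:algindep-sequence} provided one exhibits a single collection of $m$ such bilinear forms without any cubic relation; this reduces to a finite combinatorial verification per $n$, already checked in low dimensions and compatible with the evidence gathered for \Cref{conj:algindep-rank1-quadratics}. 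The $\Z/n\Z$-symmetry on triples together with the natural doubling $X\mapsto Y$ should again be the key structural tool for pushing this through for all $n$.

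Finally, stability under small perturbations $q_{ijk} + \varepsilon p_{ijk}$ follows by semicontinuity: both ``$\gram(f_2)$ is a singleton'' and ``the supporting face is dual nondegenerate'' are Euclidean-open on parameter space under the strict-complementarity condition implicit in the nondegeneracy hypothesis, so they persist in a neighborhood. Since the perturbations $p_{ijk}$ are constrained to a linear subspace and enter $f_2$ continuously, nothing further needs to be checked. The hard part of the whole argument is decidedly the radicality step in (ii): the bipartition decomposition of $V(I)$ has exponentially many components, and proving that $I$ captures all of them scheme-theoretically appears to require detailed combinatorial analysis of the mod-$n$ triple system or a genuinely invariant-theoretic use of its cyclic symmetry.
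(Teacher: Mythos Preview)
The statement is labeled a \emph{Conjecture} in the paper, and the paper does not prove it: the accompanying environment is titled \emph{Evidence} and reports only numerical verification for $n=2,\ldots,15$, together with the remark that the no-cubic-relations claim would follow from \Cref{conj:algindep-rank1-quadratics} (itself open) via the substitution $Y_i\mapsto X_i$. Your outline is therefore not competing with any proof in the paper, and the two steps you yourself flag as unresolved --- radicality of $I$ and the absence of cubic relations for all $n$ --- are precisely what keeps this a conjecture.

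Beyond those acknowledged gaps, two steps in your plan are incorrect as written. First, you invoke \Cref{prop:unique-sos-representations} to pass from $\sosupp f_2 = \langle q_{ijk}\rangle$ to unique representability, but that proposition requires the $q_i$ to be \emph{general} $k$-forms, whereas the $q_{ijk}$ are highly special rank-one bilinear forms lying on a proper subvariety. (There is a legitimate route here: once $\sosupp f_2 = \langle q_{ijk}\rangle$ is established, any two Gram matrices with image in this space differ by a symmetric matrix $H$ with $\sum_{a,b}H_{ab}q_aq_b=0$, i.e.\ by a degree-$2$ relation among the $q_{ijk}$; so the singleton property would follow from part (i), not from genericity.) Second, your appeal to \Cref{prop:algindep-sequence} for part (i) runs in the wrong direction: that proposition lets you conclude that \emph{general} elements of an irreducible variety $\mathcal D$ have no relations once a single specific tuple does, but here you need the \emph{specific} family $q_{ijk}$ to have no cubic relations. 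The paper's observation is the correct reduction: a cubic relation among the $q_{ijk}$ specializes under $Y_i\mapsto X_i$ to a cubic relation among the $(X_i+X_j+X_k)^2$, so part (i) reduces to \Cref{conj:algindep-rank1-quadratics}, which remains open.
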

\begin{proof}[Evidence]
	In \cite[\texttt{data/experiment-1}]{Taveira_Blomenhofer_Pofdecomp_Github_2023}, we checked numerically for $ n=2,\ldots,15 $. Note that  \Cref{conj:algindep-rank1-quadratics} would imply that also the specific family here has no algebraic relations up to degree $ 3 $, since this one can be reduced to the one from \Cref{conj:algindep-rank1-quadratics} by substituting $ Y_i\mapsto X_i $ for all $ i\in \{1,\ldots,n\} $. 
\end{proof}

\section{Applications}\label{sec:applications}
\noindent
Let us now discuss two practical applications of powers-of-forms decomposition. 

\subsection{Mixtures of centered Gaussians}\label{sec:gmm}

\begin{figure}[h]
	\centering
	\includesvg[width = \columnwidth]{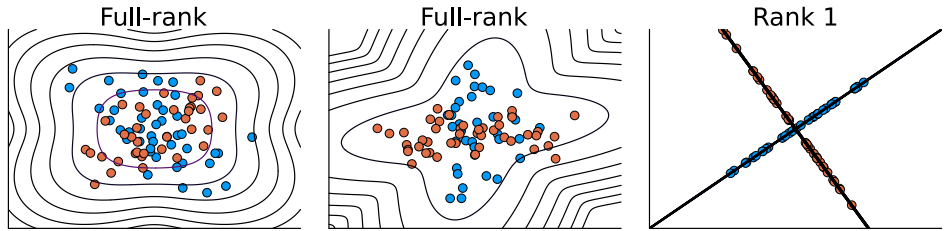}
	\caption{
		Various centered Gaussian mixture distributions of rank $ 2 $. The sample colouring indicates which of the Gaussians was chosen in the sampling process. The pictures on the left and middle show mixtures of full-dimensional Gaussians, as treated in \Cref{sec:gmm}. Here, the contour lines describe the probability density function of the mixture. The rightmost picture shows a mixture on proper subspaces. These are addressed in \Cref{sec:learn-union-subspaces}.}
	\label{fig:gmm-density-clustering}
\end{figure}

The distribution $ \mathcal{N}(\mu, \Sigma) $ of a Gaussian random vector on $ \R^n $ is parameterized by its mean vector $ \mu \in \R^n$ and its symmetric positive definite covariance matrix $ \Sigma \in \R^{n\times n} $. A Gaussian random vector is called \emph{centered}, if its mean vector is zero. In that case, all information about its distribution is contained in the quadratic form $ q := X^{T}\Sigma X $. A \emph{Gaussian mixture} $ Y $ is a random variable that is sampled as follows: From a box containing $ m $ normally distributed random variables $ Y_1,\ldots,Y_m $, blindly draw one of them (with $ \lambda_i \ge 0 $ being the probability to draw $ Y_i $, assuming $ \sum_{i = 1}^{m} \lambda_i = 1 $), and then sample $ Y_i $.  We denote $ Y = \lambda_1 Y_1 \oplus \ldots \oplus \lambda_m Y_m $ for the random variable $ Y $ defined by this sampling procedure. Let us now consider a mixture $ Y = \lambda_1\mathcal{N}(0, \Sigma_1)\oplus \ldots \oplus \lambda_m \mathcal{N}(0, \Sigma_m) $ of centered Gaussians with covariance forms $ q_i = X^{T}\Sigma_i X $. It turns out that from sufficiently many samples of $ Y $, it is possible to compute (noisy versions of) the expressions
\begin{align}\label{eq:gmm-moments}
	\sum_{i = 1}^{m} \lambda_i q_i^{d},\qquad d = 0,\ldots,D
\end{align}
where $ D\in \N $ is some threshold depending on the order of the number of samples. Up to scalars, the expressions \eqref{eq:gmm-moments} are the degree-$ 2d $ \emph{moment forms} of $ Y $, i.e. the $ 2d $-homogeneous parts of the \emph{moment generating series} $ \E_{\sim Y}[\exp(Y^{T}X)] $ of $ Y $. The connection between samples, moments and powers-of-forms expressions is explained with lots of details in my doctoral thesis \cite[Chapter 3, Introduction]{Taveira_Blomenhofer_Thesis}.

The goal is to estimate the parameters $ q_1,\ldots,q_m $ from not too many samples. The \emph{moment problem for mixtures of Gaussians} asks to recover the parameters from (exact) knowledge of the moment forms instead. It can be seen as a coarsening of the statistical estimation problem: To estimate the expression $ \sum_{i = 1}^{m} \lambda_i q_i^d $, one needs roughly $ \mathcal{O}(\sigma^{d}) $ iid samples, where the noise from estimation can be made arbitrarily small by taking more samples, $ \sigma $ is a total variance parameter depending on $ q_1,\ldots,q_m $ and the $ \mathcal{O} $-Notation hides e.g. dependency on the dimension $ n $. 
In order to be efficient with the sample complexity, it is therefore desirable to keep $ D $ as small as possible. Previous work of the author with Casarotti, Micha{\l}ek and Oneto \cite{Blomenhofer_Casarotti_Michalek_Oneto_2022} showed that theoretical identifiability of the parameters holds true in our setting, if $ D \ge 3 $ and $ m $ is not too large (roughly $ m\in \mathcal{O}(n^{d-1}) $). This explains the focus of the present paper on the minimal case $ D = 3 $. That being said, we are now ready to give a proof of \Cref{cor:cgmm-recovery-intro}:

\begin{proof}[Proof of \Cref{cor:cgmm-recovery-intro}] 
	Writing $ q_i = X^{T}\Sigma_iX $, the moment forms $ \mathcal{M}_{2d}(Y) $ of the Gaussian mixture random variable $ Y = \lambda_1\mathcal{N}(0, \Sigma_1) \oplus \ldots \oplus\lambda_m \mathcal{N}(0, \Sigma_m) $ may be expressed as a convex combination (cf. \cite[Chapter 3, Introduction]{Taveira_Blomenhofer_Thesis})
	\begin{align}
		\mathcal{M}_{2d}(Y) = c_d \sum_{i = 1}^{m} \lambda_i q_i^d, \qquad (c_d\in \Q),
	\end{align} 
	and these are given as input for $ d\in \{0,\ldots,3\} $. The combinatorial expression $ c_d $ is explicitly known, see \cite[Chapter 3, Introduction]{Taveira_Blomenhofer_Thesis}. By \Cref{cor:pof-recovery-intro-real-variety}, we know that there exists a Euclidean open subset $ \mathcal{U} $ of $ m $-tuples of quadratics where \Cref{algo:pof-recovery-general} recovers the quadratics from their third and second order powers sums. Now, fix some positive definite form $ p $ and observe that for $ \lambda\in \R_{>0} $ sufficiently large, $ \mathcal{U}' := \{q\in \mathcal{U} + \lambda p \mid q_1 \succ 0,\ldots,q_m\succ 0 \} $ will be a nonempty Euclidean open subset of tuples of positive definite quadratics. On this subset, the following algorithm works: 
	
	\noindent
	Choose a new variable $ Z $ and compute 
 	\begin{align}\label{eq:power-sum-shift}
		f_2 = \sum_{i = 1}^{m} \lambda_i (q_i - Z)^2, \quad f_3 = \sum_{i = 1}^{m} \lambda_i (q_i - Z)^3.
	\end{align}
	from the input. This is possible, since the $ X $-homogeneous parts of the power sums from \eqref{eq:power-sum-shift} correspond to the moments forms $ \mathcal{M}_0(Y),\mathcal{M}_{2}(Y),\ldots,\mathcal{M}_6(Y) $. Plug in $ Z\mapsto \lambda p $ to obtain an instance where \Cref{algo:pof-recovery-general} succeeds to recover the addends $ q_1-\lambda p,\ldots,q_m-\lambda p $, with corresponding weights. Shift back by $ \lambda p $ to recover the covariance forms.   
\end{proof}

\begin{remark}\label{rem:gauss-gramian}
	The numerical experiments, which lead to \Cref{conj:pof-recovery-intro-gram}, suggest that the shifting method from the proof of \Cref{cor:cgmm-recovery-intro} works with asymptotic probability $ 1 $ in an ``average case'' framework, where the covariance forms $ q_1,\ldots,q_m $ are constructed from ``random positive definite matrices'':
	Consider $ q_1,\ldots,q_m $ sampled iid from the distribution $ \frac{1}{n^2} X^{T}A^{T}AX $, where $ A $ is a random $ n\times n $ matrix with iid Gaussian  distributed entries in $ \mathcal{N}(0, \sigma^2) $. Here, $ \frac{1}{n^2}\tr(A^{T}A) $, which is the squared Frobenius norm of $ \frac{1}{n}A $, will concentrate around the expected value, which is $ \sigma^2 $, with high probability. Thus for large $ n $, $ q_1,\ldots,q_m $ will all have roughly the same trace. A proxy for this value is algorithmically accessible, since $ \sigma^2 \approx \frac{1}{m} \tr(\sum_{i = 1}^{m} q_i) $. 
	A posteriori, this motivates \Cref{def:gauss-trace-free}(b) of Gauss-Gramian quadratic forms, since that will be the distribution of Gaussian random psd forms after shifting by the trace. 
\end{remark}

\subsection{Learning a union of subspaces}\label{sec:learn-union-subspaces}

Learning unions of subspaces is a comparatively new problem that emerged from applications in computer vision and dimensionality reduction techniques in data science (\cite{Wang_Wipf_Ling_Chen_Wassell_2015},\cite{Hegde_Indyk_Schmidt_2016a},\cite{Lipor_Balzano_2017},\cite{Hong_Malinas_Fessler_Balzano_2018}). 
It assumes that the given data stems from a distribution, whose support is a union of $ r $-dimensional subspaces $ U_1,\ldots,U_m $. $ r $ is known and the objective is to find bases for the subspaces $ U_1,\ldots,U_m $ from samples of the mixture distribution as input. A union of subspaces is an algebraic variety. E.g., if the subspaces are hyperplanes defined by linear forms, then their union is the zero set of the product of the linear forms. Note that without a distributional assumption, recovering this variety is likely the best one can do, and it can be a hassle to recover the high-degree polynomials describing it from data. 

However, assuming that the data on the individual subspaces is Gaussian distributed, it is often possible to recover the subspaces using degree-$ 6 $ moments of the empirical data. More so, it is then possible to learn the distributions on the subspaces. Indeed, note that this Gaussian subspace learning is a generalized version of the Gaussian mixture problem from \Cref{sec:gmm}, with the difference that Gaussians on subspaces will lead to psd forms $ q_1,\ldots,q_m $ that are not of full rank. 

\begin{corollary}[{Restatement of \Cref{cor:unionspaces-recovery-intro}}]\label{cor:unionspaces-recovery-restatement}
	For any $ n, r\in \N $, $m \leq n-1$, there is a Euclidean open subset $ \mathcal{U} $ of the problem parameters and an efficient algorithm for the following problem: If $ Y_1,\ldots,Y_m $ are normally distributed random variables on $ r $-dimensional subspaces $ U_1,\ldots,U_m $ and $ \lambda\in \R_{> 0}^m $ with $ \sum_{i = 1}^{m} \lambda_i = 1 $, compute bases for the subspaces $ U_1,\ldots,U_m $ from the moments of $ \lambda_1 Y_1 \oplus \ldots \oplus \lambda_m Y_m $ of degree at most $ 6 $.
\end{corollary}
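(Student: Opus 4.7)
The plan is to mimic the reduction used in the proof of \Cref{cor:cgmm-recovery-intro}, adapted to the rank-deficient setting. First I would parameterize each $Y_i$ by its rank-$r$ positive semidefinite covariance matrix $\Sigma_i$ with $\im \Sigma_i = U_i$, form the quadratic $q_i = X^T \Sigma_i X$, and extract from the degree-$\leq 6$ moments of $Y$ the four power sums $f_d = \sum_{i=1}^m \lambda_i q_i^d$ for $d \in \{0,1,2,3\}$ via the Wick/Isserlis formula, exactly as in the Gaussian mixture case. A basis of $U_i$ can then be read off at the end from the symmetric matrix $\Sigma_i$ associated to the recovered quadratic $q_i$, as the span of eigenvectors with nonzero eigenvalue.

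The second step is to run \Cref{algo:pof-recovery-general} on $(f_2, f_3)$ to obtain $\{(q_i, \lambda_i)\}$. Correctness reduces to verifying that the two hypotheses of \Cref{thm:main-result} --- namely $\sosupp f_2 = \langle q_1,\ldots,q_m \rangle$ together with $\dim \R[\langle q_1,\ldots,q_m \rangle]_3 = \binom{m+2}{3}$ --- hold on a nonempty Euclidean open subset $\mathcal{U}$ of the parameter space of tuples of $r$-dimensional subspaces together with rank-$r$ covariances supported on them.

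The main obstacle is that rank-$r$ quadratics form a proper algebraic subvariety of $\R[X]_2$, so the Bertini-style genericity arguments of \Cref{sec:real-geometry} do not transfer verbatim. I would handle this by exhibiting an explicit parameter choice at which both hypotheses can be verified, and then appealing to Euclidean openness of each. When $\sum_i U_i$ is a proper subspace of $\R^n$, the joint real variety $V_\R(q_1,\ldots,q_m) = (\sum_i U_i)^\perp$ is a nonzero real linear subspace and \Cref{lem:sosupport-representation}(a) yields $\sosupp f_2 = \langle q_1,\ldots,q_m \rangle$ as soon as the ideal $(q_1,\ldots,q_m)$ is radical, which is straightforward to arrange at a specific configuration (e.g.\ by placing the $U_i$ on distinct coordinate subspaces in sufficiently general position). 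For parameter regions with $\sum_i U_i = \R^n$, where this geometric argument fails, I would compose the procedure with the shifting construction $q_i \mapsto q_i - \lambda p$ for a positive definite form $p$ and sufficiently large $\lambda > 0$, as in the proof of \Cref{cor:cgmm-recovery-intro}, so that the now indefinite full-rank shifted forms fall into the scope of \Cref{cor:pof-recovery-intro-real-variety} (for $m \leq n-2$) or \Cref{cor:pof-recovery-intro-real-variety-m=n-1} (for $m = n-1$); the recovered shifted addends are then translated back by $+\lambda p$ to yield the original $q_i$.

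The absence of degree-$3$ algebraic relations, being a Zariski open condition, is verified at a single explicit configuration of rank-$r$ forms (for example using an analogue of \Cref{prop:algindep-sequence} applied to the irreducible variety of rank-$r$ psd quadratics) and then spreads by openness. Intersecting the Euclidean open neighborhoods where each of the required conditions holds produces the desired $\mathcal{U}$.
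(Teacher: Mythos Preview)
Your overall architecture---extract the power sums from the degree-$\leq 6$ moments, shift, run \Cref{algo:pof-recovery-general}, shift back---matches the paper. But your two sub-cases both have gaps, and the first one is not repairable.

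\textbf{The direct geometric sub-case fails outright.} You invoke \Cref{lem:sosupport-representation}(a), which requires not only that $I=(q_1,\ldots,q_m)$ be radical but also that $V_\R(I)$ be Zariski dense in $V(I)$. For positive semidefinite quadratics this second hypothesis is never met when $r\ge 2$: the real zero set of a psd rank-$r$ form is the linear space $U_i^{\perp}$ of codimension $r$, whereas the complex hypersurface $V(q_i)$ has codimension $1$. Hence $V_\R(I)=(\sum_i U_i)^\perp$ is far too small to be dense in $V(I)$. Concretely, for $q=X_1^2+X_2^2$ one has $q^2=(X_1^2-X_2^2)^2+(2X_1X_2)^2$, so $X_1^2-X_2^2\in\sosupp q^2\setminus\langle q\rangle$. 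In other words, sums of squares of psd quadratics are \emph{never} uniquely Sum-of-Squares representable, and $\sosupp f_2$ is strictly larger than $\langle q_1,\ldots,q_m\rangle$. Radicality alone does not save this.

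\textbf{The shifting sub-case is the right idea but is incomplete.} After shifting by $-\lambda p$, the resulting forms are full rank, but they vary only over a translate of the rank-$r$ locus, which is a proper subvariety of $\R[X]_2^m$ when $r<n$. The ``general'' hypotheses in \Cref{cor:pof-recovery-intro-real-variety} and \Cref{cor:pof-recovery-intro-real-variety-m=n-1} refer to Zariski-open sets in the full space; you must show these open sets actually meet the shifted rank-$r$ locus. The paper does this by building everything around a specific low-rank point: it takes $q_{\mathrm{spec}}=(X_i^2-X_n^2)_{i=1}^m$, verifies by hand that $(q_{\mathrm{spec}})$ is radical (explicit prime decomposition) and that all $2^m$ projective zeros are real (attaining the B\'ezout bound), and then uses semicontinuity of the degree to propagate ``all solutions real'' to a Euclidean neighbourhood $\mathcal{U}'$. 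Finally it observes that the shift $\mathcal{U}'+2X_n^2$ meets the rank-$r$ psd cone (since $X_i^2+X_n^2$ lies in its closure for every $r\ge 2$), giving the desired open set of parameters. Your proposal is missing exactly this explicit anchor point and the B\'ezout/semicontinuity step that makes the openness argument go through on the low-rank stratum.
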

\begin{proof} 
	For each $ i\in \{1,\ldots,m\} $, there exists a unique quadratic form $ q_i $ on $ \R^n $ such that the restriction of $ q_i $ to $ U_i $ equals the covariance form of $ Y_i $ and the kernel of $ q_i $ is the orthogonal complement $ U_i^{\perp} $ of $ U_i $ (with respect to the standard inner product). It is then not too hard to see that the even degree moment forms of degree at most $ 6 $ of $ \lambda_1 Y_1 \oplus \ldots \oplus \lambda_m Y_m $, up to known scalars, attain the form 
	\begin{align}\label{eq:subspace-recovery-1}
		\sum_{i = 1}^{m} \lambda_i q_i^d, \qquad d=0,1,2,3.
	\end{align}
	Write $ \mathcal{D}_r $ for the class of quadratic forms of rank at most $ r $. If $ r = 1 $, then the $ q_i $ are squares of linear forms and we can directly use an algorithm for $ 1 $-Waring decomposition, similar to the one from \Cref{app:polinearforms}. Thus, wlog $ r\ge 2 $. 
	
	Consider first a special instance: For $ m\le n-1 $, $ I = (X_1^2-X_n^2,\ldots,X_{m}^2-X_n^2) $ is a radical ideal: Indeed, if $ \mathfrak{P} $ is a minimal prime containing $ I $, then by Krull's Hauptidealsatz, $ \mathfrak{P} $ has length at most $ m $. In addition, for each $ i\in \{1,\ldots,m\} $, we have $ X_1-X_n\in \mathfrak{P} $ or $ X_1 + X_n \in \mathfrak{P} $. Thus, there exists a sign choice $ \sigma\in \{\pm 1\}^{m} $ such that $ (X_1 + \sigma_1 X_n,\ldots, X_m + \sigma_m X_n) \subseteq \mathfrak{P} $. Since the ideal to the left is prime and of height $ m $, we have equality. We conclude that any minimal prime containing $ I $ is of the form $\mathfrak{P} = (X_1 + \sigma_1 X_n,\ldots, X_m + \sigma_m X_n) $. A quick exercise shows that the intersection of those $ 2^{m} $ primes is indeed $ I $. Thus, $ I $ is radical. Denote $ q_{\mathrm{spec}} :=  (X_i^2-X_n^2)_{i=1,\ldots,m} $ for those special quadratic forms. 
	
	By the Kleiman-Bertini theorem \cite[Appendix B, 9.2]{fulton2012intersection}, the set of rank-$ r $ forms $ q = (q_1,\ldots,q_m) $ such that $ V(q_1,\ldots,q_m) $ is not of pure codimension $ m $, is closed. Thus, we find a Zariski open neighbourhood $ \mathcal{U} $ of $ q_{\mathrm{spec}} $ such that for all $ q\in \mathcal{U} $, all irreducible components of $ V(q) $ have codimension $ m $. 
	Consider thus a general subspace $ \mathcal{H} $ in $ \C^n $ of fitting dimension such that the intersection $ \mathcal{H}\cap V(q) $ consists of $ \deg V(q) $ many lines (or, projective points). Since $ \deg V(q_{\mathrm{spec}}) $ attains the Bézout bound, by semicontinuity of the degree, all $ q $ in a neighbourhood will satisfy $ \deg V(q) = 2^m $, too. For $ q = q_{\mathrm{spec}} $, all $ 2^m $ projective points in $ \mathcal{H}\cap V(q) $ are real. This property must also hold in a Euclidean neighbourhood: Indeed, if a sequence of general $ q^{(n)} \in \R[X]_{2}^m $ existed such that $ q^{(n)} \to q_{\mathrm{spec}} $ as $ n\to \infty $ and $ H\cap V(q^{(n)})$ did contain non-real points, then a pair of complex conjugate non-real solutions $ (z_n, z_n^{\ast}) $ would degenerate to just one real solution. Since $ H\cap V(q)$ has a constant number of $ 2^m $ points in a neighbourhood of $ q_{\mathrm{spec}} $, this is not possible. As $ \mathcal{H} $ was an arbitrary (general) hyperplane, it follows that $ V(q) $ has dense real points in a Euclidean neighbourhood $ \mathcal{U}' $ of $ q = q_{\mathrm{spec}} $. 
	
	By \Cref{lem:sosupport-representation}(b), \Cref{algo:pof-recovery-general} succeeds to recover the addends, if the (weighted) power sums are constructed from elements of $ \mathcal{U}' $. Therefore, on the open subset $ \mathcal{D}_r \cap (\mathcal{U} + 2X_n^2) $ of quadratics of rank $ r $, the following algorithm works: 
	
	\begin{enumerate}
		\item Choose a new variable $ Z $ and compute 
		\begin{align}\label{eq:power-sum-shift-2}
			\sum_{i = 1}^{m} (q_i - Z)^2\quad \text{ and } \quad \sum_{i = 1}^{m} (q_i - Z)^3.
		\end{align}
		from the input.
		\item Plug $ Z\mapsto 2X_n$ into the forms from \eqref{eq:power-sum-shift} to obtain power sums $ f_2, f_3 $, whose unique decomposition has addends in $ \mathcal{U} $. 
		\item Use \Cref{algo:pof-recovery-general} on input $ f_2, f_3 $ to compute some $ (\hat{q}_1,\lambda_1),\ldots,(\hat{q}_m,\lambda_m)$. 
		\item Output $ \{(\hat{q}_1+2X_n^2,\lambda_1),\ldots,(\hat{q}_m+2X_n^2,\lambda_m)\}$.
	\end{enumerate}
	
	\medskip
	The set $ \mathcal{D}_r \cap \mathcal{U}' $ is open in $ \mathcal{D}_r $ and intersects the subset $ \psd_r $ of rank-$ r $ psd quadratics in the point  $ q = (X_i^2-X_n^2)_{i=1,\ldots,n-1} $. Every neighbourhood of a point in $ \psd_r $ contains points in the interior of $ \psd_r $. This means that $ \mathcal{U}' \cap \psd_r $ contains a nonempty open subset $ \mathcal{U}'' $ of $ \psd_r $, showing the claim. Note that the weights can be arbitrary positive reals, summing up to $ 1 $.
\end{proof}

\bibliography{bibML}
\bibliographystyle{acm}
\appendix

\section{Powers of Linear forms}\label{app:polinearforms}

\begin{theorem}[{Restatement of \Cref{thm:undercomplete-waringdecomp}}]\label{thm:undercomplete-waringdecomp-restatement}  
	There exists an efficient algorithm that, on input $ m, n \in \N$ and forms $ f_2 $, $ f_3 $ of degrees $ 2 $ and $ 3 $, respectively, computes the solution to the following problem: If $ f_2, f_3 $ have a POF decomposition
	\begin{align}\label{eq:undercomplete-waringdecomp-restatement-1}
		f_d = \sum_{i = 1}^{m} \lambda_i \ell_i^d
	\end{align} 
	such that $ \ell_{1},\ldots,\ell_{m} $ are linearly independent and $ \lambda_1,\ldots,\lambda_m\in \R\setminus\{0\} $, then compute $ {(\ell_{1},\lambda_1),\ldots,(\ell_{m},\lambda_m)} $. Under these conditions, \eqref{eq:undercomplete-waringdecomp-restatement-1} is the unique minimum rank POF decomposition of $ (f_2, f_3) $ and the only POF decomposition with linearly independent addends. 
\end{theorem}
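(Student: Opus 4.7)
The plan is to carry out the classical Jennrich-style eigendecomposition, working entirely inside the span $V := \langle \ell_1,\ldots,\ell_m \rangle$. The first step is to extract $V$ from $f_2$ alone: since $\ell_1,\ldots,\ell_m$ are linearly independent and all $\lambda_i$ are nonzero, the symmetric Gram matrix $M_2$ representing $f_2$ has rank exactly $m$, and its column space equals $V$. In particular, any POF decomposition $f_2 = \sum_{j=1}^{m'} \sigma_j p_j^2$ must satisfy $m' \geq \mathrm{rank}(M_2) = m$, which already yields the minimum-rank claim.

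Working now in a basis of $V$, let $L = [\ell_1|\cdots|\ell_m] \in \R^{m\times m}$ be the matrix of coordinate vectors of the $\ell_i$ and $\Lambda = \diag(\lambda_1,\ldots,\lambda_m)$, so that $L$ is invertible and $M_2 = L\Lambda L^T$. Contracting $f_3$ against an auxiliary direction $x$ produces a symmetric slice
\begin{align*}
M_3(x) = L\, \Lambda\, D_x\, L^T, \qquad D_x := \diag(\ell_1(x),\ldots,\ell_m(x)),
\end{align*}
and hence the (in general nonsymmetric) matrix
\begin{align*}
A(x) := M_3(x)\, M_2^{-1} = L\, D_x\, L^{-1}
\end{align*}
has eigenvalues $\ell_1(x),\ldots,\ell_m(x)$ and the columns of $L$ as eigenvectors. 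For a generic $x$ these eigenvalues are pairwise distinct and nonzero, so a nonsymmetric eigendecomposition of $A(x)$ recovers $L$ up to column permutation and scaling. The scalings and weights $\lambda_i$ are then fixed uniquely by solving the linear system $M_2 = \sum_i \lambda_i \ell_i \ell_i^T$, which is well-posed because the rank-one matrices $\ell_i \ell_i^T$ are linearly independent when the $\ell_i$ are.

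For the uniqueness claim, suppose $f_d = \sum_{j=1}^{m'} \mu_j p_j^d$ is another POF decomposition. The rank lower bound already forces $m' \geq m$. If $m' = m$, the $p_j$ must themselves be linearly independent (otherwise $\mathrm{rank}(\sum_j \mu_j p_j p_j^T) < m$, contradicting $\mathrm{rank}(M_2) = m$), so their span coincides with $V$. Applying the construction above to this second decomposition yields the same matrix $A(x)$, whose simple-spectrum eigendecomposition is unique up to permutation of eigenvector columns; this forces $\{(\ell_i,\lambda_i)\} = \{(p_j,\mu_j)\}$ as multisets.

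The only delicate point in the whole argument is the genericity of the contracting direction: one needs $\prod_{i<j}(\ell_i - \ell_j) \in \R[X]$ to be nonzero at $x$, which is immediate from linear independence of $\ell_1,\ldots,\ell_m$ and implies that a random $x$ works almost surely. Algorithmically, a single random $x$ suffices in exact arithmetic; numerical robustness can be improved by simultaneously diagonalizing $A(x)$ and $A(y)$ for a second generic direction $y$, which is the standard stabilization of the Leurgans--Ross--Abel procedure.
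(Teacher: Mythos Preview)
Your proof is correct and takes essentially the same Jennrich-style route as the paper: extract the span $V$ from the column space of $M_2$, contract $f_3$ in a generic direction, and diagonalize; the paper phrases this as the generalized eigenvalue problem $\det(M_v-\mu M)=0$ rather than forming $M_3(x)M_2^{-1}$, but the two are equivalent.

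Two small points deserve tightening. First, the linear system $M_2=\sum_i\lambda_i\ell_i\ell_i^{T}$ by itself cannot separate the eigenvector scalings from the weights, since $(\ell_i,\lambda_i)\mapsto(c\ell_i,c^{-2}\lambda_i)$ leaves $M_2$ invariant; the paper fixes the normalization of each $\ell_i$ by reading off the eigenvalue $\ell_i(x)$ of $A(x)$ before solving for $\lambda_i$, and you need to use either the eigenvalues or $f_3$ in the same way. Second, to conclude that any POF decomposition with linearly independent addends has \emph{exactly} $m$ terms, apply your own rank observation symmetrically: if $p_1,\ldots,p_{m'}$ are independent and all $\mu_j\neq 0$, then $P\,\mathrm{diag}(\mu)\,P^{T}$ has rank $m'$, forcing $m'=\mathrm{rank}(M_2)=m$.
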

\begin{proof}[Algorithmic Proof] 
	First, note that a partial derivative of $ f_2 $ in direction $ v\in \R^n $ has the form $ \partial_{v} f_2 = 2\sum_{i = 1}^{m} \lambda_i \ell_{i}(v) \ell_{i} $.   
	The set $ \{\partial_{v} f_2 \mid v\in \R^n \} $ equals $ \langle \ell_{1},\ldots,\ell_{m} \rangle $, by linear independence. Thus, we may compute some basis $ u = (u_1,\ldots,u_m) $ of $ U:= \langle \ell_{1},\ldots,\ell_{m} \rangle $. Then, there exist vectors $ a_1,\ldots,a_m\in \R^m $ such that $ \ell_{i} = a_i^{T}u $. 
	Compute now the partial derivative 
	\begin{align}
		f_v := \frac{1}{3}\partial_{v} f_3 = \sum_{i = 1}^{m} \lambda_i (a_i^{T}u(v)) (a_i^{T}u)^2 = u^{T}M_v u
	\end{align}
	of $ f_3 $ in some random direction $ v\in \R^n $. Here, we write $ M_v := \sum_{i = 1}^{m} \lambda_i (a_i^{T}u(v)) a_ia_i^{T} $. Similarly, the quadratic form $ f_2 $ can be written as $ f_2 = u^{T}Mu $, where the matrix $ M = \sum_{i = 1}^{m} \lambda_i a_ia_i^{T} \in \R^{m\times m} $ is symmetric and psd. Note that the matrices $ M $ and $ M_v $ can easily be computed from $ u $, $ f_2 $ and $ f_v $. The claim is now that the generalized eigenvalue problem 
	\begin{align}
		\det(M_v-\mu M) = 0, \qquad \mu\in \R
	\end{align}
	has $ m $ onedimensional eigenspaces, with corresponding eigenvalues $ \mu_i := a_i^{T}u(v) $. Indeed, since $ v $ was chosen at random from some continuous distribution, $ M_v $ is of full rank $ m $, with probability $ 1 $. The rank of 
	\begin{align}
		M_v-\mu M = \sum_{i = 1}^{m} \lambda_i ((a_i^{T}u(v))-\mu) a_ia_i^{T}
	\end{align} 
	drops to $ m-1 $ precisely if $ \mu = \mu_j := a_j^{T}u(v) $ for some $ j\in \{1,\ldots,m\} $. Hence, these are the eigenvalues. By randomness of $ v $, the eigenvalues are pairwise distinct. Choose eigenvectors $ x_1,\ldots,x_m $, satisfying the generalized eigenvalue equation
	\begin{align}
		M_vx_j = \mu_j Mx_j 
	\end{align}
	Writing this equation out and comparing coefficients with respect to the basis $ a_1,\ldots,a_m $, we get that $ (a_i^{T}u(v)) (a_i^{T}x_j) = \mu_j (a_i^{T}x_j) $ and therefore $ a_i^{T}x_j = a_{j}^{T}x_j \cdot \delta_{ij} $ for each $ i, j\in \{1,\ldots,m\} $. Therefore, with $ b_j := Mx_j = \lambda_j (a_{j}^{T}x_j) a_j $, we recovered a multiple of $ a_j $. It remains to recover the missing multiples and the weights. To this end, note that the values $ \mu_j = a_{j}^{T}u(v) $ and $ b_j^{T}u(v) $ are known to us, so we can compute $ a_j = \frac{\mu_j}{b_j^{T}u(v)} b_j $ and thus $ \ell_{i} = a_i^{T}u $. For the weights, solve the linear system
	\begin{align}
		f_2 = \sum_{i = 1}^{m} \nu_i \ell_{i}^2, \qquad \nu_1,\ldots,\nu_m\in \R.
	\end{align}
	Since the $ \ell_{i}^{2} $ are linearly independent, this system will have the unique solution $ \nu_j = \lambda_j$. This concludes the algorithmic part of the proof. 
	
	Regarding the uniqueness statement: For any other decomposition $ f_d = \sum_{i = 1}^{m'} \rho_i l_i^d $ with $ m'\in \N $, $ d\in \{2,3\} $, linear forms $ l_i\in \R[X] $ and $ \rho_i\in \R\setminus \{0\} $, one easily sees that the space $ U = \{\partial_{v} f_2 \mid v\in \R^n \} $ must be contained in $ \langle l_1,\ldots,l_{m'} \rangle$. Since $ \dim U = m $, this means that $ m' \ge m $, with equality if and only if $ l_1,\ldots,l_m $ are linearly independent. If $ m = m' $, then the two decompositions are therefore both equal to the output of the algorithm. This means they must be equal. 
\end{proof}

\section{Sum-of-Squares Pointedness and strict complementarity}\label{app:gram-structure}

Sum-of-Squares representations of some form $ f\in \R[X]_{2k} $ may be found algorithmically via the following primal-dual pair of semidefinite programs:
\begin{align}\label{eq:gram-primal}
	(\gram)\quad \qquad \find   \qquad &G \succeq 0\\
				   \subjto \qquad &[X]_{k}^{T} G [X]_{k} = f\nonumber
\end{align}
\begin{align}\label{eq:gram-dual}
	(\gram)^{\ast} \qquad \minimize  \qquad &E(f) \\
	\subjto \qquad &E\in \R[X]_{2k}^{\dual} \nonumber \\
	 &M_E \succeq 0\nonumber
\end{align}
Here, for a functional $ E\in \R[X]_{2k}^{\dual} $ we denote by $ M_E := (E(X^{\alpha+\beta}))_{|\alpha|=k=|\beta|} $ the so-called \emph{moment matrix} of $ E $. The moment matrix encodes a¸ psd bilinear form $ (p, q)\mapsto E(pq) $ associated with $ E $. It is well-known that $ E \in \Sigma_{2k}^{\ast} $ if and only if $ M_E $ is psd. Note that the dual problem has $ 0 $ as its optimal value. The set of optimal solutions defines a face 
\begin{align}
	C_f = \{E\in \Sigma_{2k}^{\ast} \mid E(f) = 0 \}
\end{align}
of the dual cone $ \Sigma_{2k}^{\ast} $. By complementarity, for each optimal pair $ (G, E) $ it holds that $ M_E \cdot G = 0 $. In other words,  $ \im G \subseteq \ker M_E $. Taking $ G \in \relint \gram(f)$, we see that each $ E\in C_f $ satisfies  $ \sosupp f \subseteq \ker M_E $ by \Cref{prop:gram-spec}. The latter can also be seen directly: If $ \lambda > 0 $ such that $ f-\lambda p^2 \in \Sigma_{2k} $, then  $ E(p^2) = 0 $ for all $ E\in C_f $ and thus by the Cauchy-Schwarz inequality applied to the psd bilinear form $ M_E $, $ E(ph)^2 \leq E(p^2)E(h^2) = 0 $ for each $ h\in \R[X]_{k} $, implying $ p\in \ker M_E $. It is easy to see that the space $ \ker M_E $ is constant among all $ E\in \relint C_f $. 

\emph{Strict complementarity} is the property $ \im G = \ker M_E $. If it holds for some pair $ (G, E) $ of primal-dual optimal solutions, then it has to hold for all $ G\in \relint \gram (f) $ and $ E\in \relint C_f $. In that case, we have $ \ker M_E = \sosupp f $ for all $ E\in \relint C_f $ and we say that $ f $ has \emph{dual nondegenerate Sum-of-Squares support}. Dual nondegeneracy can be useful from a practical perspective, since \eqref{eq:gram-dual} has nice properties, e.g., a full-dimensional feasible region. Geometrically, it means that the supporting face of $ f $ in $ \Sigma_{2k} $ is exposed (by any $ E\in \relint C_f $). 

\subsubsection{Relation to pointed Sum-of-Squares cones}\label{app:pointed-sos-cones} 
In this section, we will see that in the ``geometric'' settings of \Cref{cor:pof-recovery-intro-real-variety} and \Cref{cor:pof-recovery-intro-real-variety-m=n-1}, the second order power sum has dual nondegenerate Sum-of-Squares support. The argument relies on work of Blekherman, Smith and Velasco \cite{Blekherman_Smith_Velasco_2019} and examines pointedness of Sum-of-Squares cones in quotient algebras of the polynomial ring.

\begin{prop}\label{prop:blekherman-sos-pointed} (cf. Prop. 2.5. in \cite{Blekherman_Smith_Velasco_2019}). 
	Let $ k, n\in \N $ and $ I\subseteq \R[X] $ a homogeneous ideal with graded coordinate ring $ R = \R[X]/I $ such that
	\begin{align}\label{eq:pointedsos-restricted-radicality}
		\forall p\in R_{k}\colon p^2 \in I_{2k} \implies p\in I_{k}
	\end{align}
	Then the following are equivalent:
	\begin{enumerate}[(a)]
		\item The cone $ \Sigma_{R, 2k} $ is \emph{pointed}, i.e. it is closed and contains no lines. 
		\item No nontrivial sum of squares of forms of degree $ k $ equals zero in $ R_{2k} $.
	\end{enumerate}
\end{prop}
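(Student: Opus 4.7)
The plan is to prove each implication separately, noting that the restricted radicality assumption \eqref{eq:pointedsos-restricted-radicality} is needed only for (a) $\Rightarrow$ (b), while (b) $\Rightarrow$ (a) splits naturally into proving the absence of lines and the closedness of the cone.

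For (a) $\Rightarrow$ (b), I would argue by contraposition. Suppose there exist $p_1,\ldots,p_N \in R_k$ not all zero with $\sum_i p_i^2 = 0$ in $R_{2k}$. Pick $p_j \ne 0$ in $R_k$. Then $p_j^2 = -\sum_{i\ne j} p_i^2$ shows that both $p_j^2$ and $-p_j^2$ lie in $\Sigma_{R,2k}$. Since the cone is pointed, it contains no lines, forcing $p_j^2 = 0$ in $R_{2k}$. Now the restricted radicality assumption \eqref{eq:pointedsos-restricted-radicality} yields $p_j \in I_k$, i.e.\ $p_j = 0$ in $R_k$, a contradiction.

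For (b) $\Rightarrow$ (a), the absence of lines is immediate: if $f,-f\in \Sigma_{R,2k}$, write $f = \sum_i p_i^2$ and $-f = \sum_j q_j^2$; then $\sum_i p_i^2 + \sum_j q_j^2 = 0$ in $R_{2k}$, and (b) forces all summands to vanish in $R_k$, so $f=0$. The closedness part is the main technical step. Fix norms on the finite-dimensional spaces $R_k$ and $R_{2k}$. By Carathéodory's theorem applied to the conic hull of $\{q^2 \mid q\in R_k\}$ inside $R_{2k}$, every element of $\Sigma_{R,2k}$ can be expressed as a sum of at most $N := \dim R_{2k}$ squares. Let $f_n\to f$ with $f_n \in \Sigma_{R,2k}$, and write $f_n = \sum_{i=1}^{N} p_{n,i}^2$ with $p_{n,i}\in R_k$. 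The heart of the argument is a rescaling trick to force boundedness of the $p_{n,i}$: if $c_n := \max_i \|p_{n,i}\|\to \infty$ along some subsequence, then $c_n^{-2} f_n = \sum_i (p_{n,i}/c_n)^2$ has bounded summands with at least one of unit norm, while $c_n^{-2} f_n \to 0$ in $R_{2k}$. Extracting a convergent subsequence produces limit elements $\bar p_i\in R_k$, not all zero, with $\sum_i \bar p_i^2 = 0$ in $R_{2k}$, contradicting (b). Once the $p_{n,i}$ are known to be bounded, a final compactness extraction gives polynomials $\bar p_i$ realizing $f$ as $\sum_i \bar p_i^2$, so $f\in \Sigma_{R,2k}$.

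The main obstacle is the closedness statement: both Carathéodory (to cap the number of squares uniformly) and the rescaling/normalization step are needed, and it is precisely at the latter step that assumption (b) is indispensable. Neither step requires \eqref{eq:pointedsos-restricted-radicality}; that assumption only intervenes to transfer ``$p^2 = 0$ in $R_{2k}$'' to ``$p = 0$ in $R_k$'' in the converse direction.
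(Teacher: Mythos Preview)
Your proof is correct. The direction (a)$\Rightarrow$(b) and the ``no lines'' part of (b)$\Rightarrow$(a) match the paper's argument essentially verbatim. The difference lies in the closedness step: the paper avoids Carath\'eodory and the rescaling/subsequence extraction altogether by observing that $K := \{p^2 \in R_{2k} \mid \|p\|=1\}$ is a compact generating set for $\Sigma_{R,2k}$ with $0\notin \conv K$ (the latter being exactly condition (b)), and then invoking the standard fact that the conic hull of a compact set whose convex hull avoids the origin is closed. Your sequential argument is more hands-on and self-contained, while the paper's is shorter but relies on that background lemma; both ultimately use (b) at the same conceptual point, namely to rule out a nontrivial sum of squares degenerating to zero.
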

\begin{proof}
	\textbf{(a)$ \implies $(b):} By contraposition. Let there be $ N\in \N $ and $ p_1,\ldots,p_N \in R_{k} $ such that $ \sum_{i = 1}^{N} p_i^2 = 0 $. Since no nontrivial squares lie in $ I_{2k} $ by assumption, it holds that $ N\ge 2 $. Thus $ p_1^2 = -\sum_{i = 2}^{N} p_i^2 $ lies both in $ \Sigma_{R, 2k} $ and $ -\Sigma_{R, 2k} $. It follows that $ \Sigma_{R, 2k} $ contains a line.\\
	\textbf{(b)$ \implies $(a):} By assumption, $ \Sigma_{2k} $ cannot contain a line, since otherwise there would be some nonzero $ f \in \Sigma_{2k}\cap -\Sigma_{2k}$ and the nontrivial Sum-of-Squares $ f + (-f) $ would be zero. It remains to show that $ \Sigma_{2k} $ is closed. Fix a norm on the real vector space $ R_{k} $ and denote
	\begin{align}
		K := \{p^2 \in R_{2k} \mid \|p\| = 1\}
	\end{align}
	Then $ K $ is a compact basis of the cone $ \Sigma_{2k} $: 
	Indeed, $ K $ is compact since it is the image of a compact set under a continuous function. 
	It holds $ 0\notin \conv K $ by assumption, since no nontrivial sum of squares is zero in $ R_{2k} $. Therefore the cone generated by $ K $, which is $ \Sigma_{2k} $, is closed. 
\end{proof}

\begin{prop}\label{prop:dual-nondegeneracy}
	Let $ f\in \Sigma_{2k} $. Write $ I := (\sosupp f) $ for the homogeneous ideal generated by the Sum-of-Squares support and $ R := \R[X]/I $ for the quotient algebra graded by the degree. Consider the statements:
	\begin{enumerate}[(i)]
		\item $ f $ has dual nondegenerate Sum-of-Squares support.
		\item $ C_f $ spans $ I_{2k}^{\perp} $. 
		\item The cone $ \Sigma_{R, 2k}^{\ast} \subseteq R_{2k}^{\dual} $ is full dimensional.  
		\item The cone $ \Sigma_{R, 2k} \subseteq R_{2k} $ is pointed, i.e. it is closed and contains no lines.
		\item $\forall p_1,\ldots,p_N\in \R[X]_{k}\colon p_1^2 + \ldots +  p_N^2 \in I_{2k} \implies p_1,\ldots,p_N\in I_{k}$. 
	\end{enumerate}
	Then it holds $ (v)\implies (iv) \implies (iii) \Longleftrightarrow (ii) \Longleftrightarrow (i) $. 
\end{prop}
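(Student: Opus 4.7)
The plan is to exploit throughout the canonical isomorphism $I_{2k}^{\perp} \cong R_{2k}^{\dual}$, which identifies $C_f = \Sigma_{2k}^{\ast} \cap I_{2k}^{\perp}$ with $\Sigma_{R,2k}^{\ast}$. Under this identification, an $E \in I_{2k}^{\perp}$ descends to $\bar E \in R_{2k}^{\dual}$, and the Gram form $M_E$ on $\R[X]_k$ factors through $\pi\colon \R[X]_k \to R_k$ as $\bar M_{\bar E}(\bar p, \bar q) = \bar E(\bar p \bar q)$ on $R_k$, with $\ker M_E = \pi^{-1}(\ker \bar M_{\bar E})$. Since $\sosupp f = \ker \pi$, statement (i) translates into $\bar M_{\bar E}$ being positive definite on $R_k$ for $\bar E \in \relint \Sigma_{R,2k}^{\ast}$.

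First, (v) $\Rightarrow$ (iv) follows by applying Proposition \ref{prop:blekherman-sos-pointed} to $R$: condition (v) specializes to \eqref{eq:pointedsos-restricted-radicality} in the $N = 1$ case and is equivalent to condition (b) of that proposition, so $\Sigma_{R,2k}$ is pointed. The implication (iv) $\Rightarrow$ (iii) is standard convex duality (cf.\ \cite{Barvinok_2002}): a closed pointed convex cone in a finite-dimensional real vector space has full-dimensional dual cone. And (iii) $\Leftrightarrow$ (ii) is immediate from the isomorphism $C_f \cong \Sigma_{R,2k}^{\ast}$, which literally translates ``$C_f$ spans $I_{2k}^{\perp}$'' into ``$\Sigma_{R,2k}^{\ast}$ spans $R_{2k}^{\dual}$''.

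For (i) $\Rightarrow$ (ii), positive definiteness of $\bar M_{\bar E_0}$ at $\bar E_0 \in \relint \Sigma_{R,2k}^{\ast}$ forces $\bar E_0(s) > 0$ for every nonzero $s \in \Sigma_{R,2k}$: writing $s = \sum c_i \bar p_i^2$ with $c_i \ge 0$, nontriviality forces some $\bar p_i \ne 0$, whence positive definiteness gives $\bar E_0(\bar p_i^2) > 0$. Hence $\bar E_0$ lies in the interior of $\Sigma_{R,2k}^{\ast}$, which must be full-dimensional. For (ii) $\Rightarrow$ (i), fix $E_0 \in \relint C_f$; the standard minimality-at-a-relative-interior-point argument yields $\ker M_{E_0} = \bigcap_{E' \in C_f} \ker M_{E'} = \{p \in \R[X]_k : p \cdot \R[X]_k \subseteq C_f^{\perp}\}$, which by (ii) equals the colon ideal $(I_{2k} :_{\R[X]_k} \R[X]_k)$.

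The main obstacle is then to establish $(I_{2k} :_{\R[X]_k} \R[X]_k) = \sosupp f$, a saturation-type identity that fails for arbitrary homogeneous ideals but should hold here thanks to the specific provenance of $I = (\sosupp f)$. I would prove this by contradiction using the maximality of $\sosupp f$ as a facial subspace of $\gram(f)$ from Proposition \ref{prop:gram-spec}(f): given $p \in \R[X]_k$ with $p \cdot \R[X]_k \subseteq I_{2k}$ but $p \notin \sosupp f$, use $p^2 \in I_{2k}$ to write $p^2 = \sum s_i h_i$ with $s_i \in \sosupp f$, and form the symmetric matrix $B := \sum (e_{s_i} e_{h_i}^{T} + e_{h_i} e_{s_i}^{T})/2$, which satisfies $[X]_k^{T} B [X]_k = p^2$ and vanishes as a bilinear form on $(\sosupp f)^{\perp}$ since each $e_{s_i} \in \sosupp f$. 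For small $\epsilon > 0$, the perturbation $G_0 + \epsilon(e_p e_p^{T} - B)$ of some $G_0 \in \relint \gram(f)$ then lies in $\gram(f)$ (it represents $f$) and has $p$ in its image, contradicting the maximality of $\sosupp f$; the technical heart is the positive-semidefiniteness check, which exploits that the $B$-correction vanishes on $\ker G_0 = (\sosupp f)^{\perp}$ and balances the perturbation against the strict positivity of $G_0$ on $\sosupp f$.
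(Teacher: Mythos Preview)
Your argument for $(v)\Rightarrow(iv)\Rightarrow(iii)\Leftrightarrow(ii)$ and for $(i)\Rightarrow(ii)$ is correct and essentially parallels the paper's; your $(i)\Rightarrow(ii)$ via ``strict positivity forces the dual cone to be full-dimensional'' is a minor variant of the paper's direct argument, which writes an arbitrary $L \in I_{2k}^\perp$ as $(L+\lambda E) - \lambda E$ with $L+\lambda E \in C_f$ for large $\lambda$. For $(ii)\Rightarrow(i)$ you are in fact more explicit than the paper: the paper simply asserts that $\ker M_L = I_k$ holds for general $L \in I_{2k}^\perp$ --- which is precisely your saturation identity $(I_{2k} :_{\R[X]_k} \R[X]_k) = I_k$ --- and offers no justification for it.

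Your Gram-perturbation argument for this identity, however, has a genuine gap. Write $e_p = (u,v)$ along the orthogonal splitting $V \oplus V^\perp$ with $V = \sosupp f$, so that $v \neq 0$ by assumption. In block form $G_0 = \left(\begin{smallmatrix} A & 0 \\ 0 & 0 \end{smallmatrix}\right)$ with $A \succ 0$, while your $B$ has vanishing $(V^\perp,V^\perp)$-block but an uncontrolled off-diagonal block $B_{12}$. Hence $G' = G_0 + \epsilon(e_pe_p^T - B)$ has $(V^\perp,V^\perp)$-block equal to $\epsilon vv^T$, whose kernel inside $V^\perp$ is $v^\perp$; positive semidefiniteness then forces the off-diagonal block to annihilate this kernel, i.e.\ $(uv^T - B_{12})w = 0$ for every $w \in V^\perp$ with $w \perp v$, which reduces to $B_{12}w = 0$. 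Nothing in your construction guarantees this. You only exploit $p^2 \in I_{2k}$, not the full hypothesis $p \cdot \R[X]_k \subseteq I_{2k}$, so there may be room to choose the representation $p^2 = \sum s_i h_i$ (and hence $B$) more carefully --- but as written the psd check does not go through, and the paper does not fill this gap either.
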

\begin{proof}
	``(v)$ \implies $(iv)'': Note that in \Cref{prop:blekherman-sos-pointed}, (b)$ \implies $ (a) also holds without the assumption \eqref{eq:pointedsos-restricted-radicality}. 
	
	\noindent
	``(iv)$\implies$ (iii)'': First, assume $ \Sigma_{R, 2k} $ is pointed in $ R_{2k} $. If $  \Sigma_{R, 2k}^{\ast} \subseteq H $ was contained in a hyperplane $ H $, then by standard properties of the dual, $ \Sigma_{R, 2k}^{\ast\ast}  $ would contain the line $  H^{\perp} $. Since $ \Sigma_{R, 2k} $ is closed, this would yield the contradiction $ H^{\perp} \subseteq \Sigma_{R, 2k}^{\ast\ast} = \overline{\Sigma_{R, 2k}} = \Sigma_{R, 2k} $. 
	
	\noindent
	``(iii) $\Longleftrightarrow$ (ii)'': The quotient map $ \pi\colon\R[X]_{2k}\to R_{2k} $ yields a pullback $ \pi^{\ast}\colon R_{2k}^{\dual} \to \R[X]_{2k}^{\dual}, L\mapsto L\circ \pi $. The map $ \pi^{\ast} $ is a bijection onto its image 
	\[ 
		 I_{2k}^{\perp} = \{L\in \R[X]_{2k}^{\dual} \mid L \text{ vanishes on } I_{2k}\}.
	\]
	The image of $ \Sigma_{R, 2k}^{\ast} $ under $ \pi^{\ast} $ equals $ \Sigma_{2k}^{\ast} \cap I_{2k}^{\perp} $, which is precisely $ C_f $. 
	
	\noindent
	``(ii)$\implies$ (i)'': 
	Assume $ C_f $ spans $ I_{2k}^{\perp} $. One easily sees that $ I_{2k}^{\perp} $ consists precisely of those functionals $ L $ such that $ I$ is contained in the kernel of the moment matrix $ M_L\colon (p, q) \mapsto L(p, q) $ of $ L $. Equality $ I=\ker M_L $ holds for general elements $ L $ of $ I_{2k}^{\perp} $ and thus also for relative interior points of $ C_f $. Thus $ f $ has dual nondegenerate Sum-of-Squares support.
	
	\noindent
	``(i)$\implies$ (ii)'': Assume $ f $ has dual nondegenerate Sum-of-Squares support. Then any $ E\in \relint C_f $ satisfies $ \ker M_E = \sosupp f = I_{k}$. 
	Now, let $ L\in I_{2k}^{\perp} $. Since $ \ker M_L \supseteq \sosupp f $, it is easy to see that for any psd matrix $ B $ with $ \ker B = \sosupp f $, it holds that $ M_L + \lambda B $ is psd for all sufficiently large $ \lambda\in \R_{>0} $. Since $ f $ has dual nondegenerate Sum-of-Squares support, we may choose some $ E\in \relint C_f $ and take $ B = M_E $ and $ \lambda\in  \R_{>0} $ such that $ M_L + \lambda M_E \succeq 0 $. But then $ L+\lambda E \in C_f$, since clearly $ (L+\lambda E)(f) = 0 $ and the moment matrix of $ L+\lambda E $ is psd. Thus $ L = \left(L+\lambda E\right) - \lambda E  $ lies in the span of $ C_f $. 
\end{proof}

\begin{corollary}\label{cor:real-radical=>dual-nondegenerate} Let $ f\in \Sigma_{2k} $ and let $ I $ an ideal with $ f\in I \subseteq (\sosupp f) $. If $ I $ is real radical, then $ I  = (\sosupp f) $ and $ f $ has dual nondegenerate Sum-of-Squares support. 
\end{corollary}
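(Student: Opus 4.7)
}
The plan is to deduce both claims from a single standard characterisation of real radical ideals in $\R[X]$: an ideal $I$ is real radical if and only if whenever $p_1,\ldots,p_N\in \R[X]$ satisfy $\sum_{i=1}^N p_i^2 \in I$, then each $p_i\in I$. This is a direct consequence of the real Nullstellensatz and will do all the heavy lifting.

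First I would establish the reverse inclusion $(\sosupp f) \subseteq I$. Take $p\in \sosupp f$; by definition of the Sum-of-Squares support there exists $\lambda>0$ and forms $q_1,\ldots,q_N\in \R[X]_k$ with $f - \lambda p^2 = \sum_{i=1}^N q_i^2$. Rearranging gives
\begin{align*}
	(\sqrt{\lambda}\,p)^2 + \sum_{i=1}^N q_i^2 \;=\; f \;\in\; I.
\end{align*}
Real radicality of $I$ forces $\sqrt{\lambda}\,p \in I$, hence $p\in I_k$. Since $\sosupp f \subseteq \R[X]_k$ generates the ideal $(\sosupp f)$, this yields $(\sosupp f)\subseteq I$, and combined with the standing hypothesis $I\subseteq (\sosupp f)$ we conclude $I = (\sosupp f)$.

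Next I would invoke Proposition \ref{prop:dual-nondegeneracy}, specifically the implication $(v)\Rightarrow (i)$, applied to $I=(\sosupp f)$. Condition (v) asks that for every $p_1,\ldots,p_N\in \R[X]_k$ with $\sum_{i=1}^N p_i^2 \in I_{2k}$, each $p_i$ lies in $I_k$. But this is precisely the degree-$k$ restriction of the defining property of the real radical ideal $I$, so it holds automatically. Therefore $f$ has dual nondegenerate Sum-of-Squares support, as desired.

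If anything in this argument is delicate, it is only the choice of formulation of "real radical": the literature uses several equivalent definitions (via the real Nullstellensatz, via $\sqrt[\R]{I}=I$, or via sums of squares), and the proof relies specifically on the sum-of-squares version in a fixed degree. Once this is taken as the working definition, both conclusions follow essentially by unpacking, without any further computation or geometric input.
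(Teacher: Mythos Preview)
Your proposal is correct and follows essentially the same route as the paper: both use the sum-of-squares characterisation of real radical ideals to force $\sosupp f\subseteq I$ (hence equality), and then appeal to the implication $(v)\Rightarrow(i)$ of \Cref{prop:dual-nondegeneracy}. Your write-up is in fact more explicit than the paper's, which compresses the first step into a single sentence.
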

\begin{proof}
	Since $ I $ is real radical, for all $ p_1,\ldots,p_N\in \R[X]_{k} $ such that $ p_1^2  + \ldots + p_N^2 \in I_{2k} $, it holds $ p_1,\ldots,p_m \in I $. This shows that in fact $ I = (\sosupp f) $. In addition, by \Cref{prop:dual-nondegeneracy}, it also shows that $ f $ has dual nondegenerate Sum-of-Squares support. 
\end{proof}

\section{Typical regions with singleton Gram spectrahedra}\label{app:typical-regions-gram-spec}

The condition of \Cref{cor:pof-recovery-interpretation-variety}(a) is satisfied on a Euclidean open subset. This can e.g. be seen from the Poincaré-Miranda theorem:

\begin{theorem}[{Poincaré-Miranda, cf. \cite[Introduction]{Fonda_Gidoni_2015}}]\label{thm:poincare-miranda}  
	Write $ \mathcal{H} $ for the parallelepiped spanned by linearly independent vectors $ v_1,\ldots, v_n\in \R^n $ and let $ f\colon \mathcal{H} \to \R^n,x\mapsto (f_1(x),\ldots, f_n(x))  $ a  continuous function. Denote 
	\begin{align*}
		\mathcal{H}_i^{1} := \{\sum_{j=1}^m \lambda_j v_j \mid \lambda_j\in [0, 1], \lambda_i = 1 \}, \quad \mathcal{H}_i^{0} := \{\sum_{j=1}^m \lambda_j v_j \mid \lambda_j\in [0, 1], \lambda_i = 0 \}
	\end{align*}
	for $ i\in \{1,\ldots, n\} $. Note these are the facets of $ \mathcal{H} $. Assume that for each $ i\in \{1,\ldots, n\} $, $ f_i \le 0 $ on $ \mathcal{H}_i^{0} $, but $ f_i \ge 0 $ on $ \mathcal{H}_i^{1} $. Then $ f $ has a zero on $ \mathcal{H} $. 
\end{theorem}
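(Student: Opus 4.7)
The plan is to reduce the theorem to Brouwer's fixed point theorem applied to a suitable truncation map. First, since $v_1,\ldots,v_n$ are linearly independent, the affine map
\begin{align*}
\Phi\colon [0,1]^n \to \mathcal{H},\qquad (\lambda_1,\ldots,\lambda_n)\mapsto \sum_{j=1}^n \lambda_j v_j
\end{align*}
is a homeomorphism sending the facet $\{\lambda_i = 0\}$ to $\mathcal{H}_i^0$ and $\{\lambda_i = 1\}$ to $\mathcal{H}_i^1$. Replacing $f$ by $\tilde{f} := f\circ \Phi$ transfers the sign hypotheses verbatim, so we may assume without loss of generality that $\mathcal{H} = [0,1]^n$.

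Next, pick any $\lambda > 0$ and define a continuous self-map $g\colon [0,1]^n \to [0,1]^n$ componentwise by
\begin{align*}
g_i(x) := \max\bigl(0,\min(1,\,x_i - \lambda f_i(x))\bigr),\qquad i = 1,\ldots,n.
\end{align*}
Continuity of $g$ is immediate from continuity of $f$ and of $\max,\min$, and the domain is compact and convex. Brouwer's fixed point theorem therefore produces a point $x^* \in [0,1]^n$ with $g(x^*) = x^*$. The claim is that $f(x^*) = 0$, and this is verified coordinate-by-coordinate.

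Fix $i\in \{1,\ldots,n\}$ and distinguish three cases. If $x_i^* \in (0,1)$, then $g_i(x^*) = x_i^*$ lies in the open interval $(0,1)$, so the truncation is inactive and $x_i^* = x_i^* - \lambda f_i(x^*)$, giving $f_i(x^*) = 0$. If $x_i^* = 0$, the fixed-point condition $g_i(x^*) = 0$ forces $x_i^* - \lambda f_i(x^*) \le 0$, hence $f_i(x^*) \ge 0$; on the other hand $x^* \in \mathcal{H}_i^0$, so the hypothesis $f_i \le 0$ on $\mathcal{H}_i^0$ yields $f_i(x^*) = 0$. The case $x_i^* = 1$ is entirely symmetric, using $x^* \in \mathcal{H}_i^1$ and $f_i \ge 0$ there. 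Together these establish $f(x^*) = 0$.

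The main substantive ingredient is Brouwer's theorem; the rest is bookkeeping once the retraction $g$ is set up. One might worry that the hypothesis uses weak rather than strict inequalities, which would be an obstacle to, say, a direct degree-theoretic homotopy to the identity that requires a zero-free boundary. The truncation device sidesteps this: the clipping to $[0,1]^n$ makes the weak inequality cases collapse to equality via the fixed-point equation, so no perturbation argument is needed. Alternative routes—Sperner's lemma (Miranda's original combinatorial proof) or topological degree theory—would also work, but both rest on the same circle of results as Brouwer and do not simplify the argument further.
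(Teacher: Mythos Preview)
Your proof is correct. The reduction to the unit cube via the linear homeomorphism $\Phi$ is standard, and the clipped map $g$ is the classical device for deriving Poincar\'e--Miranda from Brouwer; your case analysis handles the weak-inequality boundary conditions cleanly, so that $f_i(x^*)=0$ in all three situations.

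As for comparison with the paper: there is nothing to compare, because the paper does not prove this theorem. It is stated in Appendix~C with a citation to Fonda--Gidoni and is used purely as a black box to exhibit a Euclidean-open set of tuples $(q_1,\ldots,q_m)$ whose real variety is nonempty. Your write-up therefore supplies strictly more than the paper does on this point. If anything, the only cosmetic remark is that the parameter $\lambda>0$ plays no role in the argument and could be set to $1$; its presence suggests a gradient-step intuition that is not needed here.
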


\noindent
We may now fill the gap that was left in the proof of \Cref{cor:pof-recovery-interpretation-variety}(a):

\begin{proof}[Proof of the addendum in \Cref{cor:pof-recovery-interpretation-variety}(a)]
	By \Cref{thm:poincare-miranda}, it clearly suffices to find, say, a rectangle $ \mathcal{H} $ and quadratics $ q_1,\ldots, q_m, q_{m+1},\ldots, q_n $ such that for each $ i\le m $, $ q_i < 0 $ on $ \mathcal{H}_i^{0} $ but $ q_i > 0 $ on $ \mathcal{H}_i^{1} $, as then the condition of \Cref{thm:poincare-miranda} will be satisfied in a neighbourhood $ \mathcal{U}_1\times \ldots \times \mathcal{U}_n \times \{q_{m+1}\} \times \ldots \times  \{q_n\} $ of $ (q_1,\ldots, q_m) $. By introducing some new variable $ Y $, one can take e.g. $ m = n $, $ q_{n+1} := 0 $ and $ q_i = (X_i + Y)^2-2Y^2 \in \R[X_1,\ldots, X_n, Y] $, for $ i\in \{1,\ldots, n\} $ where $ Y $ is another unknown, and consider the rectangle $ \mathcal{H} = [0, 1]^n \times \{1\} $ in the affine plane where ``$ Y = 1 $''. This corresponds to choosing the basis $ e_1 + e_{n+1},\ldots,e_n + e_{n+1}, e_{n+1}  $ in \Cref{thm:poincare-miranda}. Note $ \mathcal{H}_i^{a} = [0, 1]^{i-1} \times \{a\} \times [0, 1]^{n-i} \times  \{1\} $ for $ a\in \{0, 1\} $. We have $ q_i = -1 < 0 $ on $ \mathcal{H}_i^{0} $ and $ q_i = 2 > 0 $ on $ \mathcal{H}_i^{1} $. Thus, the condition $ V_{\R}(q_1,\ldots, q_m) \neq \emptyset $ is a typical property. 
\end{proof}

\end{document}